\title
	[Energy Preserving Boundary Conditions]
	{On a Class of \\ Energy Preserving Boundary Conditions for \\ Incompressible Newtonian Flows}
\author
	[Dieter Bothe]
	{Dieter Bothe}
\address
	{Center of Smart Interfaces and\newline\indent
	 Department of Mathematics\newline\indent
	 Technische Universit{\"a}t Darmstadt, \newline\indent
	 Petersenstr.~32, D-64287 Darmstadt, Germany}
\email
	{bothe@csi.tu-darmstadt.de}
\author
	[Matthias K{\"o}hne]
	{Matthias K{\"o}hne}
\address
	{Center of Smart Interfaces\newline\indent
	 Technische Universit{\"a}t Darmstadt, \newline\indent
	 Schlossgartenstr.~7, D-64289 Darmstadt, Germany}
\email
	{koehne@csi.tu-darmstadt.de}
\author
	[Jan Pr{\"u}ss]
	{Jan Pr{\"u}ss}
\address
	{Institut f{\"u}r Mathematik \newline\indent
	 Martin-Luther-Universit{\"a}t Halle-Wittenberg, \newline\indent
	 Theodor-Lieser-Str.~5, D-60120 Halle, Germany}
\email
	{jan.pruess@mathematik.uni-halle.de}
\keywords
	{energy preserving boundary condition,
	 incompressible Newtonian fluid,
	 Navier-Stokes equations,
	 Stokes equations,
	 local-in-time well-posedness,
	 maximal regularity,
	 initial boundary value problem}
\subjclass
	[2010]
	{Primary: 35Q30; Secondary: 35S30, 76D03, 76D07}
\date
	{\today}
\begin{document}
\begin{abstract}
	We derive a class of energy preserving boundary conditions for incompressible Newtonian flows
	and prove local-in-time well-posedness of the resulting initial boundary value problems,
	i.\,e.\ the Navier-Stokes equations complemented by one of the derived boundary conditions,
	in an $L_p$-setting in domains $\Omega \subseteq \bR^n$,
	which are either bounded or unbounded with almost flat boundary of class $C^{3-}$.
	The results are based on maximal regularity properties of the underlying linearisations,
	which are also established in the above setting.
\end{abstract}
\renewcommand{\baselinestretch}{1.125}
\normalsize
\maketitle

\section*{Introduction}
We consider the flow of an incompressible Newtonian fluid
with constant density $\rho > 0$ and constant viscosity $\mu > 0$
in a domain $\Omega \subseteq \bR^n$,
which is governed by the incompressible Navier-Stokes equations
\begin{equation*}
	\label{eqn:n}\tag*{${(\mbox{N})}^{a, \Omega, \cB}_{f, u_0}$}
	\begin{array}{rclll}
		\rho \partial_t u + \mbox{div}(\rho u \otimes u - S) & = & \rho f & \quad \mbox{in} & (0,\,a) \times \Omega,          \\[0.5em]
		                                       \mbox{div}\,u & = & 0      & \quad \mbox{in} & (0,\,a) \times \Omega,          \\[0.5em]
		                                          \cB(u,\,p) & = & 0      & \quad \mbox{on} & (0,\,a) \times \partial \Omega, \\[0.5em]
		                                                u(0) & = & u_0    & \quad \mbox{in} &                \Omega,          \\[0.5em]
	\end{array}
\end{equation*}
where $u$ denotes the velocity field,
$S = 2\mu D - p I$ is the (total) stress tensor with pressure $p$ and
$D = \frac{1}{2}(\nabla u + \nabla u^{\sf{T}})$  the rate of deformation tensor,
$f$ denotes the mass specific density of the driving force, e.\,g.\ gravity,
and $u_0$ is the initial velocity.
The domain $\Omega$ is assumed to be a half space, a bent half space,
or a bounded domain with boundary $\Gamma := \partial \Omega$ of class $C^{3-}$.
The flow is subject to a local boundary condition prescribed by the linear operator $\cB$,
which may depend on the velocity field $u$ and the pressure $p$.
If, for a specific flow, the ratio between the fluid viscosity $\mu$ and its density $\rho$ is sufficiently large, then
the inertia term $\mbox{div}(\rho u \otimes u)$ may be neglected and the flow is essentially governed by the incompressible Stokes equations
\begin{equation*}
	\label{eqn:s}\tag*{${(\mbox{S})}^{a, \Omega, \cB}_{f, u_0}$}
	\begin{array}{rclll}
		\rho \partial_t u - \mbox{div}\,S & = & \rho f & \quad \mbox{in} & (0,\,a) \times \Omega,          \\[0.5em]
		                    \mbox{div}\,u & = & 0      & \quad \mbox{in} & (0,\,a) \times \Omega,          \\[0.5em]
		                       \cB(u,\,p) & = & 0      & \quad \mbox{on} & (0,\,a) \times \partial \Omega, \\[0.5em]
		                             u(0) & = & u_0    & \quad \mbox{in} &                \Omega,          \\[0.5em]
	\end{array}
\end{equation*}
which form a linear system of equations.

We investigate the solvability of \ref{eqn:n} (locally in time) and of \ref{eqn:s} under diverse boundary conditions and
in the $L_p$-setting with $p>1$ sufficiently large in the non-linear Navier-Stokes case.
For standard boundary conditions at impermeable walls like the no-slip, the free-slip or the more general Navier boundary condition,
but also for the Neumann boundary condition, the (local-in-time) well-posedness of these systems is well-known.
Let us note that even for generalized Newtonian flows, the corresponding Stokes system has the property of $L_p$-maximal regularity
as was recently shown in \cite{Bothe-Pruess:Non-Newtonian}. In particular, the Navier-Stokes system for a generalized Newtonian fluid with one
of the standard boundary conditions above is locally-in-time well-posed.

On the other hand, in computational fluid dynamics the problem of formulation of appropriate boundary conditions especially at outflow boundaries appears.
For this purpose, there is a multitude of so-called artificial boundary conditions (ABCs) in use, cf.\ \cite{Gresho:Fluid-Mechanics}, but a rational derivation of those
is often missing. Even more importantly, not much is rigorously known about strong solvability of the corresponding Stokes of Navier-Stokes system.
In fact, some of these ABCs are actually known to lead to ill-posed problems.

The first aim of the present paper therefore is to give a rational derivation of a large class of non-standard boundary conditions,
containing several different ABCs, by introducing the concept of energy preserving boundary conditions.
Secondly, we will establish an $L_p$-theory of the Stokes system under (standard and) non-standard boundary conditions and apply this to
the corresponding Navier-Stokes systems, obtaining the local-in-time well-posedness of the underlying incompressible Newtonian flows.
Let us note that $L_p$-maximal regularity of the Stokes system under the diverse boundary conditions does not immediately follow from known results as,
e.g., provided in \cite{Denk-Hieber-Pruess:Maximal-Regularity, Denk-Hieber-Pruess:Maximal-Regularity-Inhomogeneous}.
The reasons for this are that (i) the known relevant maximal regularity theory only applies to parabolic systems, but not
to the Stokes system and, (ii) the known version of the Lopatinskii-Shapiro condition requires all components
of the system of boundary conditions to be of the same order.
Note that, while mixed order parabolic systems are a field of active research, cf.\ \cite{Denk-Seiler:Maximal-Regularity},
no complete generalization of the Lopatinskii-Shapiro condition for mixed order initial boundary value problems seems yet to be available.
Therefore, a main part of the present paper is to develop the $L_p$-maximal regularity
theory of the Stokes problem with various boundary conditions.
This will be done in such generality that the above mentioned standard boundary conditions are also covered,
so that the present paper provides a rather complete picture.

To establish $L_p$-maximal regularity we employ a localization procedure.
As a starting point, we consider the prototype geometry for initial boundary value problems, a halfspace.
Using $L_p$-maximal regularity for parabolic systems with divergence type boundary conditions,
which are rigorously derived in appendices,
we are able to derive the boundary symbols of the halfspace problems and employ state of the art methods such as the $\cH^\infty$-calculus to obtain $L_p$-maximal regularity.
The case of a bounded domain with sufficiently smooth boundary is then reduced to finitely many (bent) halfspace problems,
where the overall approach generalizes the well-known techniques for parabolic problems to the Stokes equations.
This way, we obtain a new generic localization procedure, which is applicable for all boundary conditions under consideration.
However, the above procedure relies on the ability to reduce a fully inhomogeneous Stokes system to the case of a Stokes flow driven
only by the boundary conditions.
Therefore, we also introduce a new splitting scheme, which is applicable for all boundary conditions and all geometries under consideration.
The overall approach may also be applied to study incompressible Newtonian flows under 
dynamic energy preserving boundary conditions of relaxation type or under
further non energy preserving boundary conditions; both cases are not contained in the present paper.

The paper is organized as follows.
We start with a rational derivation of a large class of non-standard boundary conditions in \Secref{bcs}.
The main results are stated in \Secref{main}, which also includes an overview on known results from the literature.
The above announced splitting scheme is presented in \Secref{splitting},
which also contains a brief overview of the (weak) Dirichlet and Neumann problems for the Laplacian in several geometries,
since these are the main tools to achieve the desired reduction of the Stokes system.
The proof of our main theorem concerning the $L_p$-maximal regularity of the Stokes equations
is carried out in Sections \ref{sec:halfspace}, \ref{sec:bent-halfspace} and \ref{sec:domain} for the case of a halfspace, a bent halfspace and a bounded smooth domain, respectively.
The non-linear problem is treated in \Secref{n}.
The paper closes with two appendices dealing with parabolic systems with divergence type boundary conditions,
which play a key role for the splitting scheme and the treatment of the halfspace problems.

\section{A Class of Energy Preserving Boundary Conditions}\seclabel{bcs}
The derivation of suitable boundary
conditions for flow problems is not obvious and depends on the physics which is to be modeled there.
However, one set of equations strictly applies at the boundary $\Gamma=\partial \Omega$.
These are the balance equations at $\Gamma$, also termed \emph{transmission conditions}.
For the relevant quantities mass and momentum, these transmission conditions read
\begin{equation}
\label{trans-mass}
[\![ \rho (u -u_\Gamma ) ]\!] \cdot \nu = r_\Gamma \quad \mbox{ on } (0,\,a) \times \Gamma,
\end{equation}
\begin{equation}
\label{trans-mom}
[\![ \rho u \otimes (u -u_\Gamma ) - S ]\!] \cdot \nu = f_\Gamma \quad \mbox{ on } (0,\,a) \times \Gamma,
\end{equation}
where $[\![ \phi ]\!]$ denotes the jump of a quantity $\phi$ across the interface if $\Gamma$ is passed in the direction
opposite to the interface normal $\nu$. In these equations $u$ denotes the velocity and $u_\Gamma$ is the interface velocity,
which is zero here, since we consider fixed domains for which $\Gamma$ is independent of time. As introduced above,
$S$ denotes the total stress tensor and the right-hand sides in \eqref{trans-mass} and \eqref{trans-mom}
model sources and sinks due to processes on the boundary, e.g.\ due to transfer of mass from the bulk to
the boundary (so-called adsorption). In the situations we have in mind, $r_\Gamma =0$ but $f_\Gamma$ may be non-zero,
motivated by free liquid surfaces with surface tension.

Now, if $\Gamma$ is a physical boundary, by which we mean that it separates the domain $\Omega$ from a different external
bulk phase $\Omega_{\rm ext}$, then physically sound boundary conditions are sometimes available from knowledge about
the velocity field in $\Omega_{\rm ext}$. Usually, this will provide one condition and the remaining ones need to be modeled
based on constitutive assumptions. For instance, if the exterior phase is a rigid solid, then $u_{\rm ext}=0$ and
\eqref{trans-mass} implies
\begin{equation}
\label{impermeable}
u  \cdot \nu = 0 \quad \mbox{ on } (0,\,a) \times \Gamma,
\end{equation}
expressing the fact that the ``wall'' $\Gamma = \partial \Omega$ is impermeable.

To obtain the required additional boundary conditions, consideration of the kinetic energy is helpful.
The incompressible flows under consideration are governed by
\begin{equation}
		\rho \partial_t u + \mbox{div}(\rho u \otimes u - S) = \rho f,
			\quad \quad \mbox{div}\,u = 0
			\quad \quad \mbox{in}\ (0,\,a) \times \Omega.
\end{equation}
Hence the total kinetic energy satisfies
\begin{equation}
  	\frac{\mbox{d}}{\mbox{d}t} \int_\Omega {\textstyle{\frac{1}{2}}} \rho {|u|}^2\,\mbox{d}x
	+ \int_{\Gamma} {\textstyle{\frac{1}{2}}} \rho {|u|}^2\, (u \cdot \nu) \,\mbox{d}\sigma
	+ 2\mu\int_\Omega {|D|}^2\,\mbox{d}x
	- \int_{\Gamma} u \cdot S \nu\,\mbox{d}\sigma
	= 0,
\end{equation}
where $\nu: \Gamma \longrightarrow \bR^n$ denotes the outer unit normal field of $\Omega$.
According to this energy balance, the rate of change of total kinetic energy plus the loss of kinetic energy due to internal friction
equals the power
\begin{equation}
\label{power-NS}
\pi_{\rm NS} =
	\int_{\Gamma} \left( u \cdot S \nu - {\textstyle{\frac{1}{2}}} \rho {|u|}^2\, (u \cdot \nu) \right)\,\mbox{d}\sigma,
\end{equation}
which changes the total amount of kinetic energy of the system via the boundary.
A boundary condition which causes this contribution via the boundary to vanish may therefore be considered
as an \emph{energy preserving boundary condition} for incompressible Newtonian flows.

Consider first the case when $\Gamma$ is impermeable, i.e.\ \eqref{impermeable} holds. Then
\[
\pi_{\rm NS} = \int_{\Gamma} u \cdot S \nu\,\mbox{d}\sigma
\]
and \eqref{impermeable} implies
\[
u \cdot S \nu = P_\Gamma u \cdot P_\Gamma S \nu,
\]
where $P_\Gamma = I-\nu \otimes \nu$.
Note that at fixed $x\in \Gamma$, the map $P_\Gamma (x)$ is the orthogonal projection onto the plane tangent to $\Gamma$ at $x$.
Evidently, we obtain $\pi_{\rm NS} =0$ if the \emph{no-slip boundary condition}
\begin{equation}
\label{no-slip}
P_\Gamma u  = 0 \quad \mbox{ on } (0,\,a) \times \Gamma
\end{equation}
holds. Together with \eqref{impermeable}, this yields the homogeneous \emph{Dirichlet boundary condition}
\begin{equation}
\label{eqn:nbc-uu}
 u  = 0 \quad \mbox{ on } (0,\,a) \times \Gamma
\end{equation}
which has already been introduced by Stokes; see \cite{Stokes:Navier-Stokes}.
Let us note in passing that the no-slip condition at fixed walls is much under debate recently,
especially in the context of flow through micro-channels;
cf.\ \cite{Chen-Doolen:Lattice-Boltzmann}. Its inhomogeneous version, i.e.\
\begin{equation}
\label{Dirichlet-inhom}
u  = u_{\rm D} \quad \mbox{ on } (0,\,a) \times \Gamma,
\end{equation}
is used in numerical simulations to model inflow boundaries, where the velocity profile needs to be known within
sufficient accuracy.

Another obvious way to have $\pi_{\rm NS} =0$, assuming \eqref{impermeable} to hold, is
the \emph{free-slip (or, perfect-slip) boundary condition}
\begin{equation}
\label{slip}
P_\Gamma S \nu  = 0 \quad \mbox{ on } (0,\,a) \times \Gamma.
\end{equation}
The complete boundary condition then reads
\begin{equation}
\label{eqn:nbc-du}
u \cdot \nu = 0 \quad \mbox{ and } \quad
2 \mu P_\Gamma D \nu  = 0 \quad \mbox{ on } (0,\,a) \times \Gamma.
\end{equation}
Note that at points where $\Gamma$ is locally planar, a simplification is possible. Indeed, in this case
the tangential derivatives of the velocity have no normal component due to \eqref{impermeable}, hence
$2 \mu P_\Gamma D \nu$ reduces to $\mu P_\Gamma \partial_{\nu} u$ and, hence, the complete boundary condition becomes
\begin{equation}
\label{slip-planar}
u \cdot \nu = 0 \quad \mbox{ and } \quad \mu P_\Gamma \partial_{\nu} u =0.
\end{equation}
In the applied literature this is often written as
\[
u \cdot \nu = 0 \quad \mbox{ and } \quad \partial_{\nu} u_{||} =0,
\]
which this has to be understood in the right sense: at a fixed point $x \in \Gamma$ it means $\partial_{\nu} (u \cdot \tau) = 0$
at $x$ for every fixed tangential vector $\tau \perp \nu (x)$.
Since $\partial_\nu (u \cdot \tau) = \partial_\nu u \cdot \tau$ for fixed $\tau$,
this is indeed equivalent.
Note that for non-planar $\Gamma$, the condition \eqref{slip-planar} differs from \eqnref{nbc-du} by additional curvature-related terms.

The free-slip and the no-slip conditions are the two extreme cases $\alpha =0$
and $\alpha \to \infty$, respectively, of the more general \emph{Navier boundary condition}
\begin{equation}
\label{Navier-BC}
P_\Gamma u + \alpha \, P_\Gamma S \nu = 0 \quad \mbox{ with } \alpha >0 \quad \mbox{ on } (0,\,a) \times \Gamma.
\end{equation}
This condition, which is due to Navier \cite{Navier:Navier-Stokes}, implies $\pi_{\rm NS} \leq 0$, but $\pi_{\rm NS}$ does not need to vanish.
Note that, mathematically, the Navier condition is a lower order perturbation of the free-slip condition.
Therefore, it will not play a role later on.
There is a more general \emph{Navier-type partial-slip condition}, which has recently been shown in \cite{Bucur-Feireisl-Necasova:Friction-Driven-BC}
to be in some sense the most general boundary condition which is possible for flows of incompressible Newtonian fluids at impermeable walls.

Another standard boundary condition is motivated by \eqref{trans-mom} as follows.
If $\Omega_{\rm ext}$ is a gas phase with negligible gas viscosity, then a reasonable simplification of
the momentum transmission condition is
\[
- S \nu = p_{\rm ext}\, \nu + f_\Gamma \quad \mbox{ on } (0,\,a) \times \Gamma
\]
with $p_{\rm ext}$ the external pressure. The associated homogeneous boundary condition is the homo\-geneous
\emph{Neumann boundary condition}
\begin{equation}
\label{eqn:nbc-ds}
S  \nu = 0 \quad \mbox{ on } (0,\,a) \times \Gamma,
\end{equation}
which models a free surface with external pressure set to zero and vanishing surface tension ($f_\Gamma =0$).
Under this condition, energy need not be preserved for the Navier-Stokes system, but for the Stokes equations.
Indeed, we have $\pi_S =0$, where
\begin{equation}
\label{power-S}
\pi_{\rm S} =
	\int_{\Gamma} u \cdot S \nu\,\mbox{d}\sigma.
\end{equation}

To obtain further boundary conditions, which are  energy preserving for the Stokes system,
we split both $u$ and $S \nu$ into normal and tangential parts and obtain
\begin{equation}
\pi_{\rm S} =
	\int_{\Gamma} \Big( (u \cdot \nu)\,(S\nu \cdot \nu) + P_\Gamma u \cdot P_\Gamma S \nu \Big) \,\mbox{d}\sigma.
\end{equation}
Hence another admissible combination is \eqref{no-slip} together with $S \nu \cdot \nu = 0$, i.e.\
\begin{equation}
\label{eqn:nbc-us}
P_\Gamma u = 0 \quad \mbox{ and } \quad 2 \mu\,\partial_\nu u \cdot \nu - p = 0 \quad \mbox{ on } (0,\,a) \times \Gamma.
\end{equation}
In the applied literature it is usually written in the form
\begin{equation}
\label{outflow1}
P_\Gamma u = 0 \quad \mbox{ and } \quad 2 \mu \partial_\nu (u \cdot \nu) - p = 0 \quad \mbox{ on } (0,\,a) \times \Gamma
\end{equation}
and is employed as an \emph{outflow boundary condition}; cf.\ the remark behind \eqref{slip-planar}.
Often, the factor 2 in front of the viscous term is omitted; cf.\ \cite{Gresho:Fluid-Mechanics} and note that the factor 2 does not appear
if one derives a kinetic energy balance backwards, starting with the Navier-Stokes system in which ${\rm div}\,S$ has been replaced by
$\mu \Delta u - \nabla p$, employing already ${\rm div}\,u = 0$.

For incompressible flow and planar outflow boundary, the condition $P_\Gamma u=0$ together with
$ \mbox{div}\, u =0$ implies $\partial_{\nu} (u \cdot \nu) = 0$ for sufficiently regular solutions.
This way one obtains another outflow boundary condition, namely the \emph{pressure condition}
\begin{equation}
\label{outflow2}
P_\Gamma u=0 \quad \mbox{ and } \quad p = p_0 \quad \mbox{ on } (0,\,a) \times \Gamma.
\end{equation}
The latter two boundary conditions are examples of so-called \emph{artificial boundary conditions (ABCs)}, which are imposed
at artificial domain boundaries, being inside the flow domain.

To motivate further boundary conditions, which are employed as ABCs in the numerical literature, we first need the following
simple observation. Due to the incompressibility condition, it follows that
\begin{equation*}
\mbox{div}\,D = \mbox{div}\,R = \textstyle{\frac{1}{2}}\Delta u, \quad \quad
\mbox{div}\,S = \mbox{div}\,T = \mu\Delta u - \nabla p,
\end{equation*}
where $R = \frac{1}{2}(\nabla u - \nabla u^{\sf{T}})$ denotes the  rate of rotation tensor (or, spin tensor)
and $T = 2 \mu R - pI$ is the antisymmetric counterpart of the stress tensor $S$.
Hence, the {\itshape alternative form}
\begin{equation*}
  	\frac{\mbox{d}}{\mbox{d}t} \int_\Omega {\textstyle{\frac{1}{2}}} \rho {|u|}^2\,\mbox{d}x
	+ \int_{\Gamma} {\textstyle{\frac{1}{2}}} \rho {|u|}^2\, (u \cdot \nu)\,\mbox{d}\sigma
	+ 2\mu\int_\Omega {|R|}^2\,\mbox{d}x
	- \int_{\Gamma} u \cdot T \nu\,\mbox{d}\sigma
	= 0,
\end{equation*}
{\itshape of the kinetic energy balance} is also available.
Consequently, the power $\pi_{\rm NS}$ can also be expressed as
\begin{equation}
\label{power-NS-T}
\pi_{\rm NS} =
	\int_{\Gamma} \left( u \cdot T \nu - {\textstyle{\frac{1}{2}}} \rho {|u|}^2\,(u \cdot \nu) \right)\,\mbox{d}\sigma.
\end{equation}
If this contribution vanishes, the total kinetic energy will also be monotonically decreasing
and we hence also consider boundary conditions which imply zero power due to expression \eqref{power-NS-T}.

Starting again with the case of an impermeable wall $\Gamma$, i.e.\ assuming \eqref{impermeable} to hold, a further energy preserving
boundary condition evidently is
\begin{equation}
\label{eqn:nbc-ru}
u \cdot \nu = 0 \quad \mbox{ and } \quad  2 \mu R \nu = 0 \quad \mbox{ on } (0,\,a) \times \Gamma.
\end{equation}
In $\bR^3$, a direct computation shows that this is in turn equivalent to ${\rm rot}\,u \cdot \tau = 0$ for any $\tau \perp \nu$.
This leads to the \emph{vorticity boundary condition}
\[
u \cdot \nu = 0 \quad \mbox{ and } \quad  P_\Gamma {\rm rot}\,u = 0 \quad \mbox{ on } (0,\,a) \times \Gamma.
\]

For the special case of the Stokes system, additional energy preserving boundary conditions can be read off.
The power input via the boundary then is
\begin{equation}
\label{power-S-T}
\pi_{\rm S} =
	\int_{\Gamma} u \cdot T \nu\,\mbox{d}\sigma,
\end{equation}
and $u \cdot T \nu$ can be decomposed according to
\begin{equation}
\pi_{\rm S} =
	\int_{\Gamma} \Big( P_\Gamma u \cdot P_\Gamma T \nu - (u \cdot \nu)\,p \Big) \,\mbox{d}\sigma;
\end{equation}
note that $T \nu \cdot \nu = - p$.
Hence, replacing \eqref{impermeable} by the complementary equation $T \nu \cdot \nu = 0$ leads to the Neumann-type
boundary condition $T\nu = 0$. In the split form it reads as
\begin{equation}
\label{eqn:nbc-rp}
-p = 0 \quad \mbox{ and } \quad  2 \mu R \nu = 0 \quad \mbox{ on } (0,\,a) \times \Gamma,
\end{equation}
and it may be considered as the homogeneous version of a \emph{vorticity-pressure boundary condition}, related to a free boundary.

There is one more admissible combination, namely
\[
P_\Gamma u = 0 \quad \mbox{ and } \quad T \nu \cdot \nu = 0 \quad \mbox{ on } (0,\,a) \times \Gamma.
\]
This leads to the homogeneous version of the outflow boundary condition \eqref{outflow2}, i.e.\
\begin{equation}
\label{eqn:nbc-up}
P_\Gamma u = 0 \quad \mbox{ and } \quad -p = 0 \quad \mbox{ on } (0,\,a) \times \Gamma;
\end{equation}
but note that this time it also appears in this form for general curved boundaries.

Finally, we will consider the well-posedness under two further boundary conditions which are not energy preserving,
even for Stokes flow, but appear
naturally as combinations of partial boundary conditions given above. These conditions are also employed as ABCs in the
numerical literature and they read as
\begin{equation}
	\label{eqn:nbc-dp}
	2 \mu P_\Gamma D \nu = 0 \quad \mbox{and} \quad -p = 0 \quad \mbox{ on } (0,\,a) \times \Gamma,
\end{equation}
which corresponds to the prescription of the tangential part of the normal deformation rate and the external pressure, and
\begin{equation}
	\label{eqn:nbc-rs}
	2 \mu R \nu = 0 \quad \mbox{and} \quad 2 \mu\,\partial_\nu u \cdot \nu - p = 0 \quad \mbox{on}\ (0,\,a) \times \Gamma,
\end{equation}
which corresponds to the prescription of the tangential part of the vorticity and the normal part of the normal stress.

For a better overview of the boundary conditions which are rigorously analyzed in this paper, we summarize them below.
As energy preserving boundary conditions for the Navier-Stokes system we have the conditions \eqnref{nbc-uu}, \eqnref{nbc-du} and \eqnref{nbc-ru}, which read
\begin{equation*}\tag{B1a}
	P_\Gamma u = 0 \quad \mbox{and} \quad u \cdot \nu = 0 \quad \mbox{on}\ (0,\,a) \times \Gamma,
\end{equation*}
which equals the {\itshape no-slip condition} $[u] = 0$,
\begin{equation*}\tag{B1b}
	2 \mu P_\Gamma D \nu = 0 \quad \mbox{and} \quad u \cdot \nu = 0 \quad \mbox{on}\ (0,\,a) \times \Gamma,
\end{equation*}
which is known as the {\itshape perfect slip condition}, and, finally,
\begin{equation*}\tag{B1c}
	2 \mu R \nu = 0 \quad \mbox{and} \quad u \cdot \nu = 0 \quad \mbox{on}\ (0,\,a) \times \Gamma,
\end{equation*}
Additional energy preserving boundary conditions for the Stokes system are the conditions \eqnref{nbc-ds}, \eqnref{nbc-us}, \eqnref{nbc-rp} and \eqnref{nbc-up}, which read
\begin{equation*}\tag{B2a}
	P_\Gamma u = 0 \quad \mbox{and} \quad 2 \mu\,\partial_\nu u \cdot \nu - p = 0 \quad \mbox{on}\ (0,\,a) \times \Gamma,
\end{equation*}
which corresponds to the prescription of tangential velocities and the normal component of normal stress,
\begin{equation*}\tag{B2b}
	2 \mu P_\Gamma D \nu = 0 \quad \mbox{and} \quad 2 \mu\,\partial_\nu u \cdot \nu - p = 0 \quad \mbox{on}\ (0,\,a) \times \Gamma,
\end{equation*}
which equals the {\itshape Neumann condition} $S \nu = 0$,
\begin{equation*}\tag{B2c}
	P_\Gamma u = 0 \quad \mbox{and} \quad - p = 0 \quad \mbox{on}\ (0,\,a) \times \Gamma,
\end{equation*}
which corresponds to the prescription of tangential velocities and the external pressure, and,
\begin{equation*}\tag{B2d}
	2 \mu R \nu = 0 \quad \mbox{and} \quad - p = 0 \quad \mbox{on}\ (0,\,a) \times \Gamma.
\end{equation*}
Finally, we also consider the two boundary conditions \eqnref{nbc-dp} and \eqnref{nbc-rs}, which read
\begin{equation*}\tag{B3a}
	2 \mu P_\Gamma D \nu = 0 \quad \mbox{and} \quad - p = 0 \quad \mbox{on}\ (0,\,a) \times \Gamma,
\end{equation*}
which corresponds to the prescription of the tangential part of the normal deformation rate and the external pressure, and
\begin{equation*}\tag{B3b}
	2 \mu R \nu = 0 \quad \mbox{and} \quad 2 \mu\,\partial_\nu u \cdot \nu - p = 0 \quad \mbox{on}\ (0,\,a) \times \Gamma,
\end{equation*}
In the remainder of this paper we show that all of these conditions lead to locally-in-time well-posed Navier-Stokes systems
in the appropriate $L_p$-setting.

\section{Main Results}\label{sec:main}
Our analysis of incompressible Newtonian flows subject to one of the boundary conditions (B)
is based on $L_p$-maximal regularity of the underlying linear system, i.\,e. the incompressible Stokes equations
\begin{equation*}
	\label{eqn:s-data}\tag*{${(\mbox{S})}^{a, \Omega, \cB}_{f, g, h, u_0}$}
	\begin{array}{c}
		\rho\partial_t u - \mu \Delta u + \nabla p = \rho f,
		\quad \quad \mbox{div}\,u = g
			\quad \quad \mbox{in}\ (0,\,a) \times \Omega, \\[0.5em]
		\cB(u,\,p) = h
			\quad \quad \mbox{on}\ (0,\,a) \times \partial\Omega, \\[0.5em]
		u(0) = u_0
			\quad \quad \mbox{in}\ \Omega
	\end{array}
\end{equation*}
with fully inhomogeneous data $f$, $g$, $h$ and $u_0$.
Here $\cB$ denotes the linear operator, which realizes one of the discussed boundary conditions (B).

We will focus on the cases where $\Omega$ is the half-space
\begin{equation*}
	\bR^n_+ := \left\{\,(x,\,y) \in \bR^{n - 1} \times \bR\,:\,y > 0\,\right\},
\end{equation*}
a bent half-space
\begin{equation*}
	\bR^n_\omega := \left\{\,(x,\,y) \in \bR^{n - 1} \times \bR\,:\,y > \omega(x)\,\right\},
\end{equation*}
with a sufficiently smooth and flat function $\omega: \bR^{n - 1} \longrightarrow \bR$,
or a bounded domain with sufficiently smooth boundary.
In all cases we require the boundary $\Gamma = \partial \Omega$ of $\Omega$ to be of class $C^{3-}$
and we assume $1 < p < \infty$.

To establish maximal regularity in an $L_p$-setting, we employ the natural solution spaces
\begin{equation*}
	u \in \bX_u(a) := H^1_p((0,\,a),\,L_p(\Omega,\,\bR^n)) \cap L_p((0,\,a),\,H^2_p(\Omega,\,\bR^n)),
\end{equation*}
where $H^s_p$ denotes the Bessel potential space of order $s \geq 0$, and
\begin{equation*}
	p \in \bX_p(a) := L_p((0,\,a),\,\dot{H}^1_p(\Omega)),
\end{equation*}
where
\begin{equation*}
	\dot{H}^1_p(\Omega) := \left\{\,\phi \in \cD^\prime(\Omega)\,:\,\nabla \phi \in L_p(\Omega,\,\bR^n)\,\right\}
\end{equation*}
denotes the homogeneous Bessel potential space of order one,
which becomes a semi-normed space with
\begin{equation*}
{|\phi|}_{\dot{H}^1_p(\Omega)} := {\|\nabla \phi\|}_{L_p(\Omega, \bR^n)}, \quad \phi \in \dot{H}^1_p(\Omega).
\end{equation*}
Note, that $H^1_p(\Omega)$ is a dense subspace of $\dot{H}^1_p(\Omega)$ for all domains under consideration.
As usual, $\cD^\prime(\Omega)$ denotes the space of distributions on $\Omega$.
Note, that the regularity assumptions on the domain $\Omega$ imply the embedding
\begin{equation*}
	\dot{H}^1_p(\Omega) \hookrightarrow \left\{\,\phi \in L_{p, loc}(\Omega)\,:\,\phi \in H^1_p(\Omega^\prime),\ \Omega^\prime \subseteq \Omega\ \mbox{open and bounded}\,\right\}
\end{equation*}
to be valid for all $1 < p < \infty$, cf.~\cite[Chapitre 2, Th{\'e}or{\`e}me 7.6]{Necas:Elliptic-Equations}.
The corresponding data spaces are therefore determined as
\begin{equation*}
	\begin{array}{rclcl}
		f   & \in & \bY_f(a) & := & L_p((0,\,a),\,L_p(\Omega,\,\bR^n)), \\[0.5em]
		g   & \in & \bY_g(a) & := & W^{1/2}_p((0,\,a),\,L_p(\Omega)) \cap L_p((0,\,a),\,H^1_p(\Omega)), \\[0.5em]
		u_0 & \in & \bY_u    & := & W^{2 - 2/p}_p(\Omega,\,\bR^n)
	\end{array}
\end{equation*}
and the regularity class for $h$ depends on the boundary condition.

Note, that in the $L_p$-setting it is necessary to carefully distinguish between a function defined on $\Omega$ and its trace on $\Gamma$.
To account for this, we denote by $[ \cdot ]$ the trace operator.

To simplify our notation, we denote the boundary operator as $\cB = \cB^{\alpha, \beta}$,
where the first index $\alpha \in \{\,-1,\,0,\,1\,\}$ is used to describe the tangential part of the boundary condition
and the second index $\beta \in \{\,-1,\,0,\,1\,\}$ is used to describe the normal part of the boundary condition, i.\,e.
$\alpha = 0$ indicates the prescription of the tangential velocity $P_\Gamma[u]$,
$\alpha = 1$ indicates the prescription of the tangential part of the normal stress $2 \mu P_\Gamma[D] \nu$,
$\alpha = -1$ indicates the prescription of the vorticity $2 \mu [R] \nu$,
$\beta = 0$ indicates the prescription of the normal velocity $[u] \cdot \nu$,
$\beta = 1$ indicates the prescription of the normal part of the normal stress $2 \mu\,\partial_\nu u \cdot \nu - [p]$, and, finally,
$\beta = -1$ indicates the prescription of the pressure $-[p]$.
Thus, the parameters $\alpha$ and $\beta$ will be used to describe the order of the corresponding part of the boundary condition as $|\alpha|$ resp.~$|\beta|$
and to simultaneously fix the particular convex combination between $\nabla u$ and $\nabla u^{\sf{T}}$,
which would have to be used to obtain a boundary condition of order one based on the kinetic energy balance or its alternative form.
Hence, we consider the linear operators
\begin{equation*}
	\cB^{\alpha, \beta}(u,\,p) = P_\Gamma \cB^{\alpha, \beta}(u,\,p) + Q_\Gamma \cB^{\alpha, \beta}(u,\,p),
\end{equation*}
where $Q_\Gamma = I - P_\Gamma$ denotes the projection onto the normal bundle of $\Gamma$, given as
\begin{equation*}\tag{$\cB$1}
	P_\Gamma \cB^{0, \beta}(u,\,p) := P_\Gamma [u], \qquad P_\Gamma \cB^{\pm 1, \beta} := \mu P_\Gamma [\nabla u \pm \nabla u^{\sf{T}}] \nu
\end{equation*}
for $\beta \in \{\,-1,\,0,\,+1\,\}$ and
\begin{equation*}\tag{$\cB$2}
	\begin{array}{c}
		Q_\Gamma \cB^{\alpha, 0}(u,\,p) \cdot \nu := [u] \cdot \nu \\[0.5em]
		Q_\Gamma \cB^{\alpha, +1}(u,\,p) \cdot \nu := 2 \mu\,\partial_\nu u \cdot \nu - [p], \quad Q_\Gamma \cB^{\alpha, -1}(u,\,p) \cdot \nu := - [p]
	\end{array}
\end{equation*}
for $\alpha \in \{\,-1,\,0,\,+1\,\}$.
Note, that the normal derivative in the $L_p$-setting has to be understood as $\partial_\nu = [\nabla \,\cdot\,^{\sf{T}}] \nu$.
Also note, that the boundary operators $\cB^{\alpha, 0}$ with $\alpha \in \{\,-1,\,0,\,+1\,\}$ realize
the energy preserving boundary conditions (B1) for incompressible Newtonian flows.
Moreover, the boundary operators $\cB^{\alpha, -1}$ with $\alpha \in \{\,-1,\,0\,\}$ and $\cB^{\alpha, +1}$ with $\alpha \in \{\,0,\,+1\,\}$ realize
the additional energy preserving boundary conditions (B2) for incompressible Newtonian Stokes flows.
Finally, the boundary operators $\cB^{-1, +1}$ and $\cB^{+1, -1}$ realize the non-preserving boundary conditions (B3).

Now, if $u \in \bX_u(a)$ and $p \in \bX_p(a)$, we first obtain by trace theory
\begin{equation*}
	\begin{array}{c}
		P_\Gamma \cB^{0, \beta}(u,\,p) \in \bT^0_h(a), \quad \mbox{where} \\[0.5em]
		\bT^0_h(a) := W^{1 - 1/2p}_p((0,\,a),\,L_p(\Gamma,\,T\Gamma)) \cap L_p((0,\,a),\,W^{2 - 1/p}_p(\Gamma,\,T\Gamma))
	\end{array}
\end{equation*}
for $\beta \in \{\,-1,\,0,\,1\,\}$, where $T\Gamma$ denotes the tangent bundle of $\Gamma$,
\begin{equation*}
	\begin{array}{c}
		P_\Gamma \cB^{\pm 1, \beta}(u,\,p) \in \bT^{\pm 1}_h(a), \quad \mbox{where} \\[0.5em]
		\bT^{\pm 1}_h(a) := W^{1/2 - 1/2p}_p((0,\,a),\,L_p(\Gamma,\,T\Gamma)) \cap L_p((0,\,a),\,W^{1 - 1/p}_p(\Gamma,\,T\Gamma))
	\end{array}
\end{equation*}
for $\beta \in \{\,-1,\,0,\,1\,\}$,
\begin{equation*}
	\begin{array}{c}
		Q_\Gamma \cB^{\alpha, 0}(u,\,p) \in \bN^0_h(a), \quad \mbox{where} \\[0.5em]
		\bN^0_h(a) := W^{1 - 1/2p}_p((0,\,a),\,L_p(\Gamma,\,N\Gamma)) \cap L_p((0,\,a),\,W^{2 - 1/p}_p(\Gamma,\,N\Gamma))
	\end{array}
\end{equation*}
for $\alpha \in \{\,-1,\,0,\,1\,\}$, where $N\Gamma$ denotes the normal bundle of $\Gamma$,
and, finally,
\begin{equation*}
	Q_\Gamma \cB^{\alpha, \pm 1}(u,\,p) \in \bN^{\pm 1}_h(a) := L_p((0,\,a),\,\dot{W}^{1 - 1/p}_p(\Gamma,\,N\Gamma))
\end{equation*}
for $\beta \in \{\,-1,\,1\,\}$.
Therefore, the regularity class for the boundary data $h$ is given as
\begin{equation*}
	h \in \bY^{\alpha, \beta}_h(a) := \left\{\,\eta \in L_p((0,\,a),\,L_{p,loc}(\Gamma,\,\bR^n))\,: \begin{array}{c} \cP_\Gamma \eta \in \bT^\alpha_h(a), \\[0.5em] (\,\eta\,|\,\nu\,) \nu \in \bN^\beta_h(a) \end{array}\right\}
\end{equation*}
for $\alpha,\,\beta \in \{\,-1,\,0,\,1\,\}$.

However, some boundary conditions may impose additional regularity properties on the pressure trace $[p]$.
Since
\begin{equation*}
	Q_\Gamma \cB^{\alpha, +1}(u,\,p) \cdot \nu = 2 \mu\,\partial_\nu u \cdot \nu - [p]
\end{equation*}
for $\alpha \in \{\,-1,\,0,\,1\,\}$,
the regularity class of the data may also be chosen according to the regularity of $[\nabla u]$.
Obviously, the regularity of $[\nabla u]$ is significantly higher than the regularity of $[p]$.
Indeed, as will be shown, in case of the boundary conditions given by $\cB^{\alpha, +11}$ for $\alpha \in \{\,-1,\,0,\,1\,\}$
every regularity condition in between that two may be imposed, i.\,e. the pressure $p$ belongs to the regularity class
\begin{equation*}
	\bX_{p, \gamma}(a) := \left\{\,\pi \in \bX_p(a)\,:\,[\pi] \in W^\gamma_p((0,\,a),\,L_p(\Omega)) \cap L_p((0,\,a),\,W^{1 - 1/p}_p(\Gamma))\,\right\}
\end{equation*}
with $\gamma \in [0,\,1/2 - 1/2p]$, if and only if the boundary data satisfies
\begin{equation*}
	Q_\Gamma h \in \bN^{+1}_{h, \gamma}(a) := W^\gamma_p((0,\,a),\,L_p(\Gamma,\,N\Gamma)) \cap L_p((0,\,a),\,W^{1 - 1/p}_p(\Gamma,\,N\Gamma)).
\end{equation*}
In case of the boundary conditions given by $\cB^{\alpha, -1}$ for $\alpha \in \{\,-1,\,0,\,1\,\}$,
the normal part of the boundary data equals the trace of the pressure.
Hence, the pressure $p$ belongs to the regularity class $\bX_{p, \gamma}(a)$ with $\gamma \in [0,\,\infty)$, if and only if the boundary data satisfies
$Q_\Gamma h \in \bN^{-1}_{h, \gamma}(a) := \bN^{+1}_{h, \gamma}(a)$.
For simplification we will therefore employ the notations
\begin{equation*}
	\bX_{p, -\infty}(a) := \bX_p(a) \quad \mbox{and} \quad
	\bN^\beta_{h, -\infty}(a) := \bN^\beta_h(a)
\end{equation*}
for $\beta \in \{\,-1,\,0,\,1\,\}$ and require
\begin{equation*}
	h \in \bY^{\alpha, \beta}_{h, \gamma}(a) := \left\{\,\eta \in L_p((0,\,a),\,L_{p,loc}(\Gamma,\,\bR^n))\,: \begin{array}{c} \cP_\Gamma \eta \in \bT^\alpha_h(a), \\[0.5em] (\,\eta\,|\,\nu\,) \nu \in \bN^\beta_{h, \gamma}(a) \end{array}\right\}
\end{equation*}
with $\gamma = -\infty$ for $\beta = 0$, with $\gamma \in \{\,-\infty\,\} \cup [0,\,1/2 - 1/2p]$ for $\beta = 1$
or with $\gamma \in \{\,-\infty\,\} \cup [0,\,\infty)$ for $\beta = -1$.

Another issue concerning the pressure $p$, which has to be addressed, is the fact,
that its uniqueness and continuous dependence on the data can only be guaranteed,
if the pressure trace $[p]$ is uniquely determined via the boundary condition.
If the pressure does not appear in the boundary condition, uniqueness has to be understood as uniqueness up to a constant.
To account for this phenomenon, we define
\begin{equation*}
	\bX^0_{p, -\infty}(a) := \bX_{p, -\infty}(a) / \bR \quad \mbox{and} \quad \bX^{\pm 1}_{p, \gamma}(a) := \bX_{p, \gamma}(a),
\end{equation*}
which ensures a unique pressure $p \in \bX^\beta_{p, \gamma}(a)$,
if the boundary condition is given by the linear operator $\cB = \cB^{\alpha, \beta}$ with $\alpha,\,\beta \in \{\,-1,\,0,\,1\,\}$.
The additional regularity parameter $\gamma$ has to be chosen according to the constraints discussed above.
Note, that $\bX^0_{p, -\infty}(a)$ constitutes a Banach space,
whereas $\bX^{\pm 1}_{p, -\infty}(a)$ is semi-normed via
\begin{equation*}
	{|\pi|}_{\bX^{\pm 1}_{p, -\infty}(a)} = {|\pi|}_{\bX_p(a)}, \quad \pi \in \bX^{\pm 1}_{p, -\infty}(a).
\end{equation*}
However, for $\gamma \geq 0$, the spaces $\bX^{\pm 1}_{p, \gamma}(a)$ equipped with their natural norm
\begin{equation*}
	{\|\pi\|}_{\bX^{\pm 1}_{p, \gamma}(a)} = \mbox{max}\left\{\,{|\pi|}_{\bX_p(a)},\,{\|[\pi]\|}_{\bN^{\pm 1}_{h, \gamma}(a)}\,\right\},
	\quad \pi \in \bX^1_{p, \gamma}(a)
\end{equation*}
constitute Banach spaces, too.
Analogously, the data spaces $\bY^{\alpha, 0}_{h, -\infty}(a)$ and
$\bY^{\alpha, \beta}_{h, \gamma}(a)$ with $\alpha \in \{\,-1,\,0,\,1\,\}$, $\beta \in \{\,-1,\,1\,\}$ and $\gamma \geq 0$
constitute Banach spaces with their natural norm
\begin{equation*}
	{\|\eta\|}_{\bY^{\alpha, \beta}_{h, \gamma}(a)} = \mbox{max}\left\{\,{\|P_\Gamma \eta\|}_{\bT^\alpha_h(a)},\,{\|Q_\Gamma \eta\|}_{\bN^\beta_{h, \gamma}(a)}\,\right\},
	\quad \eta \in \bY^{\alpha, \beta}_{h, \gamma}(a),
\end{equation*}
whereas the spaces $\bY^{\alpha, \beta}_{h, -\infty}$ with $\alpha \in \{\,-1,\,0,\,1\,\}$, $\beta \in \{\,-1,\,1\,\}$ are semi-normed via
\begin{equation*}
	{|\eta|}_{\bY^{\alpha, \beta}_{h, -\infty}(a)} = \mbox{max}\left\{\,{\|P_\Gamma \eta\|}_{\bT^\alpha_h(a)},\,{|Q_\Gamma \eta|}_{\bN^\beta_{h, -\infty}(a)}\,\right\},
	\quad \eta \in \bY^{\alpha, \beta}_{h, \gamma}(a).
\end{equation*}
Hence, continuous dependence of the solution on the data has in some cases to be understood w.\,r.\,t. semi-norms,
regardless of its uniqueness, which will always be guaranteed.

In addition to the above regularity conditions, there are several compatibility conditions, which have to be satisfied by the data.
First of all, the compatibility condition
\begin{equation*}\tag*{${(\mbox{C1})}_{g, u_0}$}
	\mbox{div}\,u_0 = g(0)
\end{equation*}
is necessary and a boundary condition given by the linear operator $\cB = \cB^{\alpha, \beta}$ with $\alpha,\,\beta \in \{\,-1,\,0,\,1\,\}$
implies the compatibility conditions
\begin{equation*}\tag*{${(\mbox{C2})}^\alpha_{h, u_0}$}
	\begin{array}{rcll}
		P_\Gamma [u_0]                                        & = & P_\Gamma h(0), & \mbox{if}\ \alpha = 0 \     \mbox{and}\ p > \frac{3}{2}, \\[0.5em]
		\mu P_\Gamma (\nabla u_0 \pm \nabla u^{\sf{T}}_0) \nu & = & P_\Gamma h(0), & \mbox{if}\ \alpha = \pm 1 \ \mbox{and}\ p > 3
	\end{array}
\end{equation*}
to be necessary for \ref{eqn:s-data} to admit a maximal regular solution.

Last but not least, there is an additional compatibility condition, which stems from the divergence equation in \ref{eqn:s-data}.
To reveal it, we set
\begin{equation*}
	{}_0 H^{-1}_p(\Omega) := H^1_{p^\prime}(\Omega)^\prime
\end{equation*}
with $1/p + 1/p^\prime = 1$ and define the linear functional
\begin{equation*}
	(\,\cdot\,,\,\cdot\,): \bY_g(a) \times \bN^0_{h, -\infty}(a) \longrightarrow L_p((0,\,a),\,{}_0 H^{-1}_p(\Omega))
\end{equation*}
for $\psi \in \bY_g(a)$ and $\eta \in \bN^0_{h, -\infty}(a)$ via
\begin{equation*}
	\langle\,\phi\,|\,(\,\psi,\,\eta\,)\,\rangle := \int_\Gamma [\phi]\eta\,\mbox{d}\sigma - \int_\Omega \phi \psi\,\mbox{d}x,
	\quad \phi \in H^1_{p^\prime}(\Omega).
\end{equation*}
An integration by parts yields
\begin{equation*}
	  \langle\,\phi\,|\,(\,\mbox{div}\,u,\,[u] \cdot \nu\,)\,\rangle
	= \int_\Omega \nabla \phi \cdot u\,\mbox{d}x,
	\quad \phi \in H^1_{p^\prime}(\Omega)
\end{equation*}
and we infer
\begin{equation*}
 	|\langle\,\phi\,|\,(\,\mbox{div}\,u,\,[u] \cdot \nu\,)\,\rangle|
	\leq {\|u\|}_{\bX_u(a)} {|\phi|}_{\dot{H}^1_{p^\prime}(\Omega)},
	\quad \phi \in H^1_{p^\prime}(\Omega)
\end{equation*}
as well as
\begin{equation*}
	|\langle\,\phi\,|\,\partial_t (\,\mbox{div}\,u,\,[u] \cdot \nu\,)\,\rangle|
	\leq {\|u\|}_{\bX_u(a)} {|\phi|}_{\dot{H}^1_{p^\prime}(\Omega)},
	\quad \phi \in H^1_{p^\prime}(\Omega).
\end{equation*}
Hence, a boundary condition given by the linear operator $\cB = \cB^{\alpha, \beta}$ with $\alpha,\,\beta \in \{\,-1,\,0,\,1\,\}$
implies the compatibility condition
\begin{equation*}\tag*{${(\mbox{C3})}^\beta_{g, h, u_0}$}
	\begin{array}{c}
		\begin{array}{c}
			Q_\Gamma [u_0] = Q_\Gamma h(0), \quad \mbox{if}\ p > \frac{3}{2}, \quad \mbox{and} \\[0.5em]
			(\,g,\,Q_\Gamma h\,) \in H^1_p((0,\,a),\,(H^1_{p^\prime}(\Omega),\,{|\,\cdot\,|}_{\dot{H}^1_{p^\prime}(\Omega)})^\prime),
		\end{array} \qquad \mbox{if}\ \beta = 0, \\[2em]
		\begin{array}{c}
			\mbox{there exists} \ \eta \in \bN^0_{h, -\infty}(a) \ \mbox{such that} \\[0.5em]
			Q_\Gamma [u_0] = \eta(0), \quad \mbox{if}\ p > \frac{3}{2}, \quad \mbox{and} \\[0.5em]
			(\,g,\,\eta\,) \in H^1_p((0,\,a),\,(H^1_{p^\prime}(\Omega),\,{|\,\cdot\,|}_{\dot{H}^1_{p^\prime}(\Omega)})^\prime),
		\end{array} \qquad \mbox{if}\ \beta \in \{\,-1,\,1\,\}
	\end{array}
\end{equation*}
to be necessary for \ref{eqn:s-data} to admit a maximal regular solution.

On the other hand, the above regularity and compatibility conditions are also sufficient to construct a unique maximal regular solution
to the Stokes equations \ref{eqn:s-data} for all $(f,\,g,\,h,\,u_0) \in \bY^{\alpha, \beta}_\gamma(a)$,
where we choose the space $\bY^{\alpha, \beta}_\gamma(a)$ to consist of all
\begin{equation*}
	(f,\,g,\,h,\,u_0) \in \bY_f(a) \times \bY_g(a) \times \bY^{\alpha, \beta}_{h, \gamma}(a) \times \bY_u,
\end{equation*}
which satisfy the compatibility conditions {(C)}${}^{\alpha, \beta}_{g, h, u_0}$.
The detailed results will be stated as \Thmref{s} below.

With the above notations at hand, we are able to formulate our main results.
Concerning the local well-posedness of incompressible Newtonian flows subject to one of the boundary conditions (B) we will prove
\begin{theorem}\label{thm:n}{\bfseries Local Well-Posedness, Semi-flow, Energy Inequality.}\newline
Let $\Omega \subseteq \bR^n$ be a half-space, a bent half-space or a bounded domain.
Let $\Gamma = \partial \Omega$ be of class $C^{3-}$ and let $n + 2 < p < \infty$.
Let $\cB = \cB^{\alpha, \beta}$ with $\alpha,\,\beta \in \{\,-1,\,0,\,1\,\}$ be one of the linear operators ($\cB$),
which realizes one of the boundary conditions (B).
If $\beta =  0$ let $\gamma = -\infty$;
if $\beta =  1$ let $\gamma = 1/2 - 1/2p$;
if $\beta = -1$ let $\gamma \geq 0$.

Then the Navier-Stokes equations {(N)}${}^{\infty, \Omega, \cB}_{f, u_0}$ admit a unique local-in-time solution $(u,\,p)$
on a maximal time interval $[0,\,a^\ast(f,\,u_0))$,
whenever the data $f \in \bY_f(\infty)$ and $u_0 \in \bY_u$ satisfy the compatibility conditions {(C)}${}^{\alpha, \beta}_{0, 0, u_0}$.
The solution is in the maximal regularity class
\begin{equation*}
	u \in \bX_u(a), \quad p \in \bX^\beta_{p, \gamma}(a)
\end{equation*}
for all $0 < a < a^\ast(f,\,u_0)$.
The maximal existence time is characterized as
\begin{equation*}
	a^\ast(f,\,u_0) < \infty \quad \Rightarrow \quad \lim_{t \rightarrow a^\ast(f,\,u_0)} u(t) \quad \mbox{does not exist in}\ W^{2 - 2/p}_p(\Omega).
\end{equation*}
Moreover, the solution enjoys the following properties:
\begin{enumerate}
	\item If one of the energy preserving boundary conditions for incompressible Newtonian flows (B1) is imposed and $f \in \bY_f(\infty) \cap L_p((0,\,\infty),\,L_2(\Omega,\,\bR^n))$,
	then the energy inequality
	\begin{equation*}
		\frac{\mbox{d}}{\mbox{d}t} \int_\Omega {\textstyle{\frac{1}{2}}} \rho {|u|}^2\,\mbox{d}x \leq \int_\Omega (\,\rho u\,|\,f\,)\,\mbox{d}x
	\end{equation*}
	is valid.
	\item If $f = 0$, then the solution map $u_0 \mapsto u$ generates a local semi-flow in
	\begin{equation*}
		\bZ^{\alpha, \beta} = \left\{\,v \in W^{2 - 2/p}_p(\Omega)\,:\,v \ \mbox{satisfies} \ {\mbox{(C)}}^{\alpha, \beta}_{0, 0, v}\,\right\},
	\end{equation*}
	the natural phase space for {(N)}${}^{\infty, \Omega, \cB}_{0, u_0}$ in the $L_p$-setting.
\end{enumerate}
\end{theorem}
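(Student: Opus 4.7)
The strategy is a contraction mapping argument based on the $L_p$-maximal regularity of the linear Stokes system \ref{eqn:s-data} announced as \Thmref{s}. Since $\mbox{div}\,u = 0$, we rewrite the convective term as $\mbox{div}(\rho u \otimes u) = \rho(u \cdot \nabla)u$, so that the Navier-Stokes system becomes the linear Stokes system with momentum source $f - (u \cdot \nabla)u$, while $g = 0$, $h = 0$ and the initial condition $u_0$ are unaffected. In particular, the compatibility conditions required by \Thmref{s} continue to be exactly the ones already assumed in the hypothesis.

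First I construct the reference solution $(u^\ast, p^\ast)$ of \ref{eqn:s-data} with data $(f, 0, 0, u_0)$ on a fixed reference interval $[0, a_0]$ via \Thmref{s}. For $0 < a \leq a_0$ and $r > 0$ I then define the solution operator $\Phi: B_r(a) \to \bX_u(a)$ on the closed set
\begin{equation*}
	B_r(a) := \left\{\,v \in \bX_u(a)\,:\,v(0) = u_0,\ {\|v - u^\ast\|}_{\bX_u(a)} \leq r\,\right\}
\end{equation*}
by $\Phi(v) := u$ where $(u, p)$ is the maximal regular solution of \ref{eqn:s-data} with data $(f - (v \cdot \nabla)v,\,0,\,0,\,u_0)$. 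The crucial nonlinear estimate is
\begin{equation*}
	{\|(v_1 \cdot \nabla)v_1 - (v_2 \cdot \nabla)v_2\|}_{\bY_f(a)}
	\leq C(a) \big({\|v_1\|}_{\bX_u(a)} + {\|v_2\|}_{\bX_u(a)}\big) {\|v_1 - v_2\|}_{\bX_u(a)},
\end{equation*}
with $C(a) \to 0^+$ as $a \to 0^+$ for $v_1, v_2 \in B_r(a)$. It is obtained by splitting $(v_1 \cdot \nabla)v_1 - (v_2 \cdot \nabla)v_2 = ((v_1 - v_2) \cdot \nabla)v_1 + (v_2 \cdot \nabla)(v_1 - v_2)$, using the embedding $\bX_u(a) \hookrightarrow \mbox{BUC}([0, a], C^1(\overline{\Omega}))$ available for $p > n + 2$, and exploiting that $(v_1 - v_2)(0) = 0$ to extract an extra power of $a$ by an interpolation argument in time. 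Composing with the continuous inverse of the Stokes operator from \Thmref{s} turns $\Phi$ into a strict contraction on $B_r(a)$ for sufficiently small $a$, yielding a unique local solution $(u, p) \in \bX_u(a) \times \bX^\beta_{p, \gamma}(a)$. Uniqueness on any common existence interval is obtained from the same Lipschitz estimate. A standard continuation argument produces the maximal existence interval $[0, a^\ast(f, u_0))$, and the blow-up criterion follows because, if $\lim_{t \uparrow a^\ast} u(t)$ existed in $\bY_u = W^{2-2/p}_p(\Omega)$, restarting the local construction with this limit as initial datum would extend the solution past $a^\ast$, contradicting maximality.

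For (i), testing the momentum equation with $u$, integrating over $\Omega$ and using $\mbox{div}\,u = 0$ yields, after integration by parts, the boundary term $\pi_{\rm NS}$ of \eqref{power-NS}; this vanishes for every (B1) condition by the derivation in \Secref{bcs}, so that
\begin{equation*}
	\frac{\mbox{d}}{\mbox{d}t} \int_\Omega {\textstyle{\frac{1}{2}}} \rho {|u|}^2\,\mbox{d}x + 2\mu \int_\Omega {|D|}^2\,\mbox{d}x = \int_\Omega (\,\rho u\,|\,f\,)\,\mbox{d}x,
\end{equation*}
and dropping the nonnegative dissipation gives the claim; all integrals are absolutely convergent thanks to the regularity of maximal regular solutions together with $f \in L_p((0, \infty), L_2(\Omega, \bR^n))$. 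For (ii), the semi-flow property combines uniqueness, the semi-group identity $u(\cdot + s; u_0) = u(\cdot; u(s; u_0))$, and the continuous dependence of $u$ on $u_0 \in \bZ^{\alpha, \beta}$, which follows from the Lipschitz continuity of the linear solution operator of \Thmref{s} combined with the contraction estimate. Invariance of $\bZ^{\alpha, \beta}$ along trajectories is automatic since the compatibility conditions ${\mbox{(C)}}^{\alpha, \beta}_{0, 0, u(s)}$ are preserved by the flow. I expect the principal technical obstacle to be the smallness $C(a) \to 0$ in the nonlinear estimate above, which in the mixed-order anisotropic space $\bX_u(a)$ requires a careful combination of trace and interpolation inequalities adapted to the zero-initial-value condition; once this is in place, the remainder of the argument reduces to a routine fixed-point and continuation scheme.
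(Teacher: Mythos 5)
Your proposal is correct and follows essentially the same route as the paper: the paper likewise introduces the reference Stokes solution $(u^\ast,p^\ast)$ with data $(f,0,0,u_0)$ from \Thmref{s}, reduces to a perturbed Stokes problem with nonlinearity $-((\bar u+u^\ast)\cdot\nabla)(\bar u+u^\ast)$ (your ball around $u^\ast$ is the same device in different coordinates), and invokes the standard contraction estimates available for $p>n+2$, while the blow-up criterion, energy inequality and semi-flow are handled exactly as in your last paragraph (cf.\ \Remref{n}). Your write-up is in fact more detailed than the paper's, which explicitly declines to repeat the ``well-known arguments'' for the nonlinear estimate.
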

Note, that the main statement of \Thmref{n} is the existence of unique local-in-time solutions as can be seen as follows.
\begin{remark}\label{rem:n}
The claimed characterization of the maximal existence time is a direct consequence of the existence of unique local-in-time solutions.
Indeed, if $a^\ast(f,\,u_0) < \infty$ and $\lim_{t \rightarrow a^\ast(f,\,u_0)} u(t) =: v \in W^{2 - 2/p}_p(\Omega)$ would exist,
then there would also be a unique local-in-time solution starting at $t = a^\ast(f,\,u_0)$ with initial value $v$,
which would extend the solution starting at $t = 0$ with initial value $u_0$ beyond its maximal existence time.
Analogously, the semi-flow property of the solutions to {(N)}${}^{\infty, \Omega, \cB}_{0, u_0}$ is a direct consequence of the existence of unique local-in-time solutions as well.
Moreover, the claimed energy inequality is a direct consequence of the construction of the energy preserving boundary conditions (B1).
Hence, for a complete proof of \Thmref{n} to be established,
it is sufficient to prove the existence of unique local-in-time solutions with the claimed regularity properties.
\end{remark}
The construction of unique local-in-time solutions will be carried out in \Secref{n},
based on the maximal regularity property of \ref{eqn:s-data} in the $L_p$-setting,
which is provided by
\begin{theorem}\label{thm:s}{\bfseries $L_p$-maximal Regularity, Semi-flow, Energy Inequality.}\newline
Let $a > 0$ and let $\Omega \subseteq \bR^n$ be a half-space, a bent half-space or a bounded domain.
Let $\Gamma = \partial \Omega$ be of class $C^{3-}$ and let $1 < p < \infty$, $p \neq \frac{3}{2},\,3$.
Let $\cB = \cB^{\alpha, \beta}$ with $\alpha,\,\beta \in \{\,-1,\,0,\,1\,\}$ be one of the linear operators ($\cB$),
which realizes one of the boundary conditions (B).

If $\beta =  0$ let $\gamma = -\infty$;
if $\beta =  1$ let $\gamma \in \{\,-\infty\,\} \cup [0,\,1/2 - 1/2p]$;
if $\beta = -1$ let $\gamma \in \{\,-\infty\,\} \cup [0,\,\infty)$.

Then the Stokes equations {(S)}${}^{a, \Omega, \cB}_{f, g, h, u_0}$ admit a unique maximal regular solution
\begin{equation*}
	u \in \bX_u(a), \quad p \in \bX^\beta_{p, \gamma}(a),
\end{equation*}
if and only if the data satisfies the regularity conditions
\begin{equation*}
	f \in \bY_f(a), \quad g \in \bY_g(a), \quad h \in \bY^{\alpha, \beta}_{h, \gamma}(a), \quad u_0 \in \bY_u
\end{equation*}
and the compatibility conditions {(C)}${}^{\alpha, \beta}_{g, h, u_0}$.
The solution map
\begin{equation*}
	\bY^{\alpha, \beta}_\gamma(a) \longrightarrow \bX_u(a) \times \bX^\beta_{p, \gamma}(a)
\end{equation*}
is continuous and the solution enjoys the following properties:
\begin{enumerate}
	\item If one of the energy preserving boundary conditions for incompressible Newtonian Stokes flows (B1) or (B2) is imposed, and, additionally, $f \in \bY_f(a) \cap L_p((0,\,a),\,L_2(\Omega,\,\bR^n))$, $g = 0$ and $h = 0$,
	then the energy inequality
	\begin{equation*}
		\frac{\mbox{d}}{\mbox{d}t} \int_\Omega {\textstyle{\frac{1}{2}}} \rho {|u|}^2\,\mbox{d}x \leq \int_\Omega (\,\rho u\,|\,f\,)\,\mbox{d}x
	\end{equation*}
	is valid.
	\item If $f = 0$, $g = 0$ and $h = 0$, then the solution map $u_0 \mapsto u$ generates a semi-flow in
	\begin{equation*}
		\bZ^{\alpha, \beta} = \left\{\,v \in W^{2 - 2/p}_p(\Omega)\,:\,v \ \mbox{satisfies}\ {\mbox{(C)}}^{\alpha, \beta}_{0, 0, v}\,\right\},
	\end{equation*}
	the natural phase space for {(S)}${}^{a, \Omega, \cB}_{0, 0, 0, u_0}$ in the $L_p$-setting.
\end{enumerate}
\end{theorem}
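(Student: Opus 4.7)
The necessity of the stated regularity and compatibility conditions has essentially been proved already in the preamble to the statement: regularity of $f$, $g$, $u_0$ and of the tangential and normal parts of $h$ follows from trace theory together with the identifications of $\partial_t u$, $\Delta u$ and $\nabla p$ as elements of $\bY_f(a)$; condition {(C1)}${}_{g, u_0}$ is obtained by taking $t = 0$ in the divergence equation; {(C2)}${}^\alpha_{h, u_0}$ is the $t = 0$ trace of the tangential boundary equation, which is well-defined in the ranges of $p$ excluded in the hypothesis; and {(C3)}${}^\beta_{g, h, u_0}$ reproduces the functional identity for $(\mbox{div}\,u,\,[u]\cdot\nu)$ derived above. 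For sufficiency, my plan follows the three-step scheme announced in the introduction: settle the flat halfspace $\bR^n_+$ first, transfer the result to the bent halfspace $\bR^n_\omega$ by a perturbation argument in $\omega$, and finally treat a general $C^{3-}$-domain by a partition of unity subordinate to a finite atlas of charts that straighten $\Gamma$ into bent halfspaces.

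Before invoking this localisation, I would apply the splitting scheme of \Secref{splitting} to decompose a general data tuple $(f,g,h,u_0)$ into one piece absorbing $(f,g,u_0)$ with zero boundary data and one piece carrying purely boundary data $h$. The $(f,g,u_0)$-part reduces to a heat equation for the velocity once a weak Dirichlet or Neumann problem for $\Delta$ has been solved to realise $\mbox{div}\,u_1 = g$ with the prescribed normal trace, which is precisely where the functional compatibility in {(C3)}${}^\beta$ enters. For the remaining pure boundary-value Stokes problem on $\bR^n_+$, I would Laplace-transform in $t$ and Fourier-transform in the tangential variable $x \in \bR^{n-1}$ with dual variable $\xi$; the transformed system is an ODE in the normal variable whose two exponentially decaying modes are parametrised by the roots $\omega_1 = |\xi|$ and $\omega_2 = (|\xi|^2 + \lambda\rho/\mu)^{1/2}$, and the boundary operator $\cB^{\alpha,\beta}$ then produces an explicit $2 \times 2$ boundary symbol whose invertibility for $\mbox{Re}\,\lambda > 0$ and $\xi \neq 0$ I would verify case by case. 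The main obstacle is to pass from this symbolic invertibility to $L_p$-maximal regularity in a mixed-order setting, since the version of the Lopatinskii--Shapiro condition available in \cite{Denk-Hieber-Pruess:Maximal-Regularity, Denk-Hieber-Pruess:Maximal-Regularity-Inhomogeneous} does not directly apply. My plan is to exploit the $L_p$-maximal regularity of parabolic systems with divergence-type boundary conditions proved in the appendices in order to eliminate the velocity first and so reduce the problem to a Dirichlet-to-Neumann type equation for the pressure, whose symbol is parabolically homogeneous; a bounded $\cH^\infty$-calculus for the associated operator together with operator-valued Fourier multiplier theorems (Kalton--Weis) then delivers $\bX_u(a) \times \bX^\beta_{p,\gamma}(a)$-regularity.

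Passage to $\bR^n_\omega$ I would treat as a small perturbation of the halfspace, exploiting that $\omega$ is flat in $C^{3-}$: upon pullback the principal symbols of the Stokes operator and of $\cB^{\alpha,\beta}$ differ from the flat case by terms that can be made arbitrarily small in operator norm on maximal regularity spaces, so a Neumann series closes the argument. For a bounded domain I would choose a finite atlas whose charts straighten $\Gamma$ into bent halfspaces of small distortion and patch the local maximal regular solutions via a partition of unity; the cut-offs generate lower-order commutator errors that are absorbed on short time intervals, and the result is then extended to arbitrary $a>0$ by the standard iteration. With existence, uniqueness and continuous dependence in place, item~(1) follows by testing the momentum equation with the constructed maximal regular $u$, since each boundary condition in (B1) and (B2) was engineered precisely to kill the boundary flux in the kinetic energy balance of \Secref{bcs}; item~(2) is immediate from uniqueness of the maximal regular solution of {(S)}${}^{a, \Omega, \cB}_{0, 0, 0, u_0}$ applied to $u_0 \in \bZ^{\alpha,\beta}$.
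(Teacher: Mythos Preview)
Your plan matches the paper's architecture closely: splitting scheme to strip off $(f,g,u_0)$, Laplace--Fourier analysis on $\bR^n_+$ leading to explicit boundary symbols and a Dirichlet-to-Neumann type operator for the pressure handled via $\cH^\infty$-calculus, perturbation to bent halfspaces, and localisation via partition of unity for bounded domains. The necessity direction and items (1)--(2) are disposed of exactly as the paper does in \Remref{s}.

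There is, however, one genuine technical gap in your bounded-domain step. You describe the cut-off commutators as ``lower-order'' errors to be absorbed on short time intervals, but the pressure commutator $[\nabla,\phi_k]\,p = (\nabla\phi_k)\,p$ is \emph{not} lower order in the usual parabolic sense: the solution space $\bX^\beta_{p,\gamma}(a)$ controls only $\nabla p \in L_p((0,a),L_p(\Omega))$, and a Poincar\'e-type bound on $p$ itself carries no small factor in $a$. The paper closes this by first deriving, via integration by parts against solutions of auxiliary Laplace problems (Neumann if $\beta=0$, Dirichlet if $\beta=+1$) and using the momentum equation together with the specific boundary condition, the extra time regularity
\[
p \in {}_0 H^\sigma_p\bigl((0,a),L_p(\Omega)\bigr) \qquad \text{for all } \sigma \in \bigl(0,\tfrac{1}{2}-\tfrac{1}{2p}\bigr),
\]
with an estimate $\|p\|_{{}_0 H^\sigma_p(L_p)} \leq c\bigl(\|u\|_{{}_0\bX_u(a)} + \beta\,\|h\|_{{}_0\bY^{\alpha,\beta}_{h,\gamma}(a)}\bigr)$. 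This is precisely what yields $\|(\nabla\phi_k)\,p\|_{L_p((0,a)\times\Omega)} \leq c\,a^\tau(\dots)$ and makes the Neumann-series absorption go through; the same mechanism is needed again when estimating the localised pressures $q_k$, $\bar{q}_k$ and the potentials $\eta_k$. You should flag this step explicitly: it is the main new ingredient in the localisation beyond standard parabolic theory, and without it the commutator terms cannot be made small.
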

Again, the main statement of \Thmref{s} is the existence of unique maximal regular solutions as can be seen as follows.
\begin{remark}\label{rem:s}
Once the existence of unique maximal regular solutions is proved,
the continuity of the solution map is a consequence of the open mapping principle.
Moreover, the semi-flow property follows from the existence of unique maximal regular solutions.
Last, but not least, the claimed energy inequality is a direct consequence of the construction of the energy preserving boundary conditions (B1) and (B2).
Hence, for a complete proof of \Thmref{s} to be established,
it is sufficient to prove the existence of unique maximal regular solutions.
\end{remark}

We close this section with some remarks, how the considered boundary conditions have already been treated in the literature.
Concerning strong solutions to the Navier-Stokes equations subject to a homogeneous Dirichlet condition,
{\scshape H.~Fujita} and {\scshape T.~Kato} as well as {\scshape P.~E.~Sobolevskii} have established unique local strong solutions in an $L_2$-setting
based on a semigroup approach already in 60's, cf.~\cite{Fujita-Kato:Navier-Stokes-Nonstationary, Fujita-Kato:Navier-Stokes-IVP, Sobolevskii:Navier-Stokes}.
Later on, {\scshape Y.~Giga} and {\scshape T.~Miyakawa} as well as {\scshape F.~B.~Weissler} generalized these results to the $L_p$-setting,
cf.~\cite{Giga:Stokes-Operator-Domains, Giga-Miyakawa:Navier-Stokes, Weissler:Navier-Stokes}.
The first approaches based on resolvent estimates in an $L_p$-setting are due to {\scshape V.~A.~Solonnikov}, {\scshape M.~McCracken}
respectively {\scshape S.~Ukai}, cf.~\cite{Solonnikov:Navier-Stokes, McCracken:Stokes, Ukai:Stokes}.
Finally, maximal $L_p$-regularity was established by {\scshape V.~A.~Solonnikov}, {\scshape W.~Borchers} and {\scshape T.~Miyakawa},
respectively {\scshape W.~Desch}, {\scshape M.~Hieber} and {\scshape J.~Pr{\"u}{\ss}},
cf.~\cite{Solonnikov:Navier-Stokes, Borchers-Miyakawa:Navier-Stokes, Desch-Hieber-Pruess:Stokes}.
A semigroup approach to the Stokes and Navier-Stokes equations, which in particular yields maximal $L_p$-$L_q$-regularity,
was developed by {\scshape T.~Kubo} and {\scshape Y.~Shibata}, cf.~\cite{Kubo-Shibata:Stokes, Kubo-Shibata:Navier-Stokes-Equations, Kubo-Shibata:Navier-Stokes-Flows}.
This approach was later generalized to Navier respectively Robin boundary conditions by {\scshape Y.~Shibata} and {\scshape R.~Shimada},
cf.~\cite{Shibata-Shimada:Stokes-Robin}.
The extremal cases, i.\,e.~the no-slip and the perfect slip condition,
are also covered and the results apply to a halfspace, bent and perturbed halfspaces as well as to bounded and exterior domains.

The Stokes and Navier-Stokes equations subject to Neumann boundary conditions
have first been considered by {\scshape V.~A.~Solonnikov} in a series of publications,
cf.~\cite{Solonnikov:Navier-Stokes-Free-Surface, Solonnikov:Stokes-Neumann, Solonnikov:Navier-Stokes-Neumann, Solonnikov:Stokes-Neumann-Free-Boundary}.
Later on, {\scshape Y.~Shibata} and {\scshape S.~Shimizu} established maximal $L_p$-regularity,
cf.~\cite{Shibata-Shimizu:Stokes-Neumann-Resolvent, Shibata-Shimizu:Stokes-Free-Surface, Shibata-Shimizu:Stokes-Neumann-Exterior-Domain, Shibata-Shimizu:Stokes-Neumann-Maximal-Regularity, Shibata-Shimizu:Stokes-Two-Phase-Model}.
In all cases, the motivation are either problems with free boundary or two-phase problem with an evolving phase-separating interface.
As has already been mentioned in the introduction and in \Secref{bcs},
the Stokes and Navier-Stokes equations subject to a Neumann boundary condition arise as a model problem in these situations.

A totally different approach to the Stokes and Navier-Stokes equations in an $L_p$-setting was developed in the 90's
by {\scshape G.~Grubb} and {\scshape V.~A.~Solonnikov},
cf.~\cite{Grubb-Solonnikov:Stokes, Grubb-Solonnikov:Pseudodifferential, Grubb-Solonnikov:Navier-Stokes-Nonstationary, Grubb-Solonnikov:Navier-Stokes, Grubb:Navier-Stokes, Grubb:Pseudodifferential-BCs, Grubb:Navier-Stokes-Nonhomogeneous}.
In this series of publications, the Stokes and Navier-Stokes equations are transformed into a system of pseudodifferential evolution equations and treated by an abstract pseudodifferential calculus.
This way, the authors are able to treat the Dirichlet, perfect slip and Neumann boundary conditions
and establish maximal $L_p$-regularity in each case.
As a remarkable fact, this method is applicable for mixed order boundary conditions, which have not been treated in the literature before.

Another popular boundary condition for impermeable walls, which is not mentioned in \Secref{bcs},
arises from the Navier condition,
if we additionally assume the boundary $\Gamma$ to be perfectly flat, i.\,e.~to coincide with a two-dimensional plane in $\bR^3$.
In this case, the Navier condition coincides with the {\itshape Robin Condition}.
For this condition, a complete $L_p$-theory including an $\cH^\infty$-calculus due to
{\scshape J.~Saal} is available by \cite{Saal:Navier-Stokes-Robin, Saal:Stokes-Robin-Non-Reflexive} and the monograph \cite{Saal:Stokes-Robin}.

A further well-known approach to the Navier-Stokes equations dates back to the fundamental works of {\scshape J.~Leray} and {\scshape E.~Hopf},
cf.~\cite{Leray:Navier-Stokes-Domain, Leray:Navier-Stokes-Wholespace, Hopf:Navier-Stokes}, who introduced the concept of {\itshape weak solutions}.
This way, one may construct global solutions without any smallness assumption on the initial datum.
However, the question of the uniqueness of these solutions still remains open in space dimension $n \geq 3$.
Therefore, weak solutions to the Navier-Stokes equations are still an active field of research in mathematics
and the concept is further developed nowadays.
Nevertheless, the literature focuses on (homogeneous) Dirichlet conditions.
Notable exceptions are the articles by {\scshape H.~Bellout}, {\scshape J.~Neustupa} and {\scshape P.~Penel},
cf.~\cite{Bellout-Neustupa-Penel:Navier-Stokes-Vorticity, Neustupa-Penel:Navier-Stokes-Impermeability, Neustupa-Penel:Navier-Stokes-Regularity, Neustupa-Penel:Navier-Stokes-Vorticity, Neustupa-Penel:Navier-Stokes-Navier},
who consider boundary conditions of Navier type as {\itshape generalized impermeability boundary conditions},
and the article by {\scshape F.~Boyer} and {\scshape P.~Fabrie}, cf.~\cite{Boyer-Fabrie:Navier-Stokes},
who consider Neumann type boundary conditions.
For a more detailed overview of the theory of weak solutions and its development we refer to the monographs by {\scshape O.~A.~Ladyzhenskaya}, {\scshape R.~Temam} and {\scshape G.~P.~Galdi},
cf.~\cite{Ladyzhenskaya:Navier-Stokes, Temam:Navier-Stokes, Galdi:Navier-Stokes-1, Galdi:Navier-Stokes-2}.

Several of the energy preserving boundary conditions (B)
are well-known to be applicable for artificial outflow boundaries in numerical simulations.
The vorticity conditions ($\alpha = -1$) and the pressure conditions ($\beta = -1$)
have first been used in \cite{Conca-Murat-Pironneau:Pressure-Conditions, Conca-Pares-Pironneau-Thiriet:Pressure-Conditions}.
However, a mathematically rigorous analysis of the resulting initial boundary value problems is not available up to now.

\section{A Splitting Scheme}\label{sec:splitting}
To construct a solution to \ref{eqn:s-data} with $\cB = \cB^{\alpha, \beta}$, where $\alpha,\,\beta \in \{\,-1,\,0,\,1\,\}$,
it will be convenient, to reduce the problem to the spacial case $f = 0$, $g = 0$, $P_\Gamma h = 0$, $u_0 = 0$ and $\gamma = 1/2 - 1/2p$, if $\beta = 1$.
To achieve this reduction, we decompose the desired solution $u \in \bX_u(a)$ and $p \in \bX^\beta_{p, \gamma}(a)$
as $u = v + \bar{u}$ with $v,\,\bar{u} \in \bX_u(a)$ and $p = q + \bar{p}$ with $q,\,\bar{p} \in \bX^\beta_{p, \gamma}(a)$,
where $v$ is determined as the strong solution to a suitable parabolic problem with data depending on $f$, $g$, $P_\Gamma h$ and $u_0$,
and $q$ is determined as the weak solution to a suitable elliptic problem with data depending on $f$, $g$ and $Q_\Gamma h$.
If $\beta = -1$, we may choose $\bar{u} = 0$ and $\bar{p}$ = 0, i.\,e. the solution is completely determined by the splitting scheme.
If $\beta \in \{\,0,\,+1\,\}$, the remaining part $\bar{u}$ and $\bar{p}$ of the solution is determined as the maximal regular solution
to {(S)}${}^{a, \Omega, \cB}_{0, 0, \bar{h}, 0}$, where $P_\Gamma \bar{h} = 0$.
Of course, $\bar{h}$ will depend on $v$ and $q$,
but we will ensure
\begin{equation*}
Q_\Gamma \bar{h} \in {}_0 H^1_p((0,\,a),\,\dot{W}^{-1/p}_p(\Gamma,\,N\Gamma)) \cap \bN^0_{h, -\infty}(a), \quad \mbox{if} \ \beta = 0,
\end{equation*}
resp.
\begin{equation*}
Q_\Gamma \bar{h} \in \bN^{+1}_{h, 1/2 - 1/2p}(a), \quad \mbox{if} \ \beta = +1.
\end{equation*}

Before we state our main theorem concerning the splitting scheme,
we need to prepare the suitable framework for the elliptic boundary value problems,
which will be solved for the pressure.
Depending on the boundary condition under consideration, we will construct solutions $q \in \dot{H}^1_p(\Omega)$ to the {\itshape Dirichlet problem}
\begin{equation*}
	\label{eqn:sd}\tag*{${(\mbox{DP})}_{f, h}$}
		-\Delta\,q = \mbox{div}\,f \quad \mbox{in}\ \Omega, \qquad 
		[q] = h \quad \mbox{on}\ \partial \Omega
\end{equation*}
with data $f \in L_p(\Omega,\,\bR^n)$ and $h \in \dot{W}^{1 - 1/p}_p(\partial \Omega)$.
Therefore, we want to recall the known results concerning this problem for a sufficiently large class of domains and allow $\Omega$ to be the halfspace $\bR^n_+$, a bent halfspace $\bR^n_\omega$,
where $\omega: \bR^{n - 1} \longrightarrow \bR$ is assumed to be of class $C^1$, or a bounded domain with boundary of class $C^1$.
For all these domains the trace space $\dot{W}^{1 - 1/p}_p(\partial \Omega)$ is well-defined and there exists a bounded linear trace operator
\begin{equation}
	\label{eqn:trace}
	[\,\cdot\,]: \dot{H}^1_p(\Omega) \longrightarrow \dot{W}^{1 - 1/p}_p(\partial \Omega).
\end{equation}
Due to the weak regularity assumptions on $q$ and $f$, problem (DP) has to be understood in a weak sense.
To achieve the corresponding weak formulation, we define
\begin{equation*}
	{}_0 \dot{H}^1_p(\Omega) := \mbox{cls}\,\left(C^\infty_0(\Omega),\,{|\,\cdot\,|}_{\dot{H}^1_p(\Omega)}\right) \subseteq \dot{H}^1_p(\Omega)
\end{equation*}
as the closure of the space $C^\infty_0(\Omega)$ of compactly supported, smooth functions in the semi-normed space $\dot{H}^1_p(\Omega)$.
Then, for all domains under consideration and all $1 < p < \infty$ the identity
\begin{equation*}
	{}_0 \dot{H}^1_p(\Omega) = \left\{\,\phi \in \dot{H}^1_p(\Omega)\,:\,[\phi] = 0\,\right\}
\end{equation*}
is valid, cf.~\cite{Simader-Sohr:Weak-Dirichlet}.
With this definition the (weak) Dirichlet problem {(DP)}${}_{f, h}$ is equivalent to its weak formulation
\begin{equation*}
	\label{eqn:wd}\tag*{${(\mbox{DP})}^w_{f, h}$}
	\begin{array}{c}
		-(\,\nabla \phi\,|\,\nabla q\,) = (\,\nabla \phi\,|\,f\,), \quad \phi \in {}_0 \dot{H}^1_{p^\prime}(\Omega), \\[0.5em]
		[q] = h \quad \mbox{on}\ \partial \Omega.
	\end{array}
\end{equation*}
Following the argumentation in \cite{Simader-Sohr:Weak-Dirichlet}, this is exactly the right setting to obtain maximal regular solutions to (DP) in a weak sense via
\begin{proposition}\label{prop:sd}{\bfseries The Weak Dirichlet Problem.}\newline
Let $\Omega \subseteq \bR^n$ be the half-space, a bent half-space or a bounded domain with boundary $\Gamma = \partial \Omega$ of class $C^1$ and let $1 < p < \infty$.
Then the (weak) Dirichlet problem {(DP)}${}_{f, h}$ resp.~{(DP)}${}^w_{f, h}$ admits a unique maximal regular solution $q \in \dot{H}^1_p(\Omega)$,
whenever $f \in L_p(\Omega,\,\bR^n)$ and $h \in \dot{W}^{1 - 1/p}_p(\partial \Omega)$.
\end{proposition}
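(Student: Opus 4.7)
The plan is to reduce to the homogeneous boundary case $h = 0$ by subtracting a trace extension and then to invoke the $L_p$-theory of the Dirichlet Laplacian of \cite{Simader-Sohr:Weak-Dirichlet}. For each of the three geometries the trace operator in \eqref{eqn:trace} is surjective and admits a bounded right inverse, so I would pick $q_0 \in \dot{H}^1_p(\Omega)$ with $[q_0] = h$ and ${|q_0|}_{\dot{H}^1_p(\Omega)} \leq C\,{\|h\|}_{\dot{W}^{1 - 1/p}_p(\partial\Omega)}$. Writing $q = q_0 + \tilde{q}$ transforms \ref{eqn:wd} into the task of finding $\tilde{q} \in {}_0\dot{H}^1_p(\Omega)$ with
\[
	-(\,\nabla\phi\,|\,\nabla \tilde{q}\,) = (\,\nabla\phi\,|\,\tilde{f}\,), \qquad \phi \in {}_0\dot{H}^1_{p^\prime}(\Omega),
\]
for the modified right-hand side $\tilde{f} := f + \nabla q_0 \in L_p(\Omega,\,\bR^n)$. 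This reduces the proposition to solvability and uniqueness for the homogeneous Dirichlet data.

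Next I would recast the reduced problem as an operator equation. Define the bounded linear operator $A_p : {}_0\dot{H}^1_p(\Omega) \longrightarrow ({}_0\dot{H}^1_{p^\prime}(\Omega))^\prime$ by $\langle A_p u,\,\phi \rangle := (\,\nabla\phi\,|\,\nabla u\,)$; boundedness is immediate from H{\"o}lder's inequality. The proposition is then equivalent to the assertion that $A_p$ is a topological isomorphism for every $p \in (1,\infty)$, for in that case $\tilde{q}$ is given uniquely as the image under $-A_p^{-1}$ of the functional $\phi \mapsto (\,\nabla\phi\,|\,\tilde{f}\,)$, and all estimates follow. The Hilbert case $p = 2$ is immediate from the Lax-Milgram lemma applied to the coercive form $(u,v) \mapsto (\,\nabla u\,|\,\nabla v\,)$ on the closed subspace ${}_0\dot{H}^1_2(\Omega)$ of the Hilbert space $\dot{H}^1_2(\Omega)/\text{constants}$; the substance of the proposition is the propagation of this isomorphism to all $p \in (1,\infty)$.

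For this last step I would treat the three geometries in turn. On $\Omega = \bR^n_+$ the operator $A_p$ can be inverted explicitly: odd reflection of the test functions across $\{y = 0\}$ reduces the problem to inversion of $-\Delta$ on the whole space, which is realized by Riesz transforms acting boundedly on $L_p$ for $1 < p < \infty$; this yields $A_p^{-1}$ together with the required norm bound. On a bent half-space $\bR^n_\omega$ with $\omega \in C^1$ sufficiently flat, the diffeomorphism $(x,\,y) \mapsto (x,\,y - \omega(x))$ differs from the identity by a Lipschitz map of small norm, so pulling $A_p$ back to $\bR^n_+$ yields a small bounded perturbation of the half-space operator, and a Neumann-series argument, valid once $\|\nabla\omega\|_\infty$ is small in terms of the half-space inverse, produces the isomorphism. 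On a bounded $C^1$ domain the claim is exactly the main result of \cite{Simader-Sohr:Weak-Dirichlet}, established there by combining duality with a localization scheme that reduces the problem to bent half-space patches in which the boundary is almost flat. The principal obstacle is precisely the propagation of the isomorphism from $p = 2$ to $p \neq 2$: coercivity is lost, and one has to rely on Calder{\'o}n-Zygmund estimates for the explicit half-space kernels together with the localization/perturbation machinery of Simader and Sohr in order to cover the full range $1 < p < \infty$.
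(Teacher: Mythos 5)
Your proposal is correct and follows essentially the same route as the paper: both reduce to homogeneous boundary data by subtracting an extension $q_0 \in \dot{H}^1_p(\Omega)$ of $h$ and then rest on the Simader--Sohr $L_p$-theory of the weak Dirichlet Laplacian for existence and uniqueness. The only difference is that you additionally sketch how that theory is established in the (bent) half-space (reflection/Riesz transforms plus a smallness perturbation), whereas the paper simply cites \cite[Theorem II.1.1]{Simader-Sohr:Weak-Dirichlet} and \cite[Lemma II.2.5]{Simader-Sohr:Weak-Dirichlet}.
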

\begin{proof}
First note, that the trace operator \eqnref{trace} is onto.
Now, given $f \in L_p(\Omega,\,\bR^n)$ and $h \in \dot{W}^{1 - 1/p}_p(\partial \Omega)$, we first choose $\bar{q} \in \dot{H}^1_p(\Omega)$ with
$[\bar{q}] = h$ and then solve
\begin{equation*}
	-(\,\nabla \phi\,|\,\nabla q - \nabla \bar{q}\,) = (\,\nabla \phi\,|\,f + \nabla \bar{q}\,), \quad \phi \in {}_0 \dot{H}^1_{p^\prime}(\Omega)
\end{equation*}
to obtain $q - \bar{q} \in {}_0 \dot{H}^1_p(\Omega)$.
The possibility of obtaining such a solution follows from \cite[Theorem II.1.1]{Simader-Sohr:Weak-Dirichlet},
which covers the case of a bounded domain, resp.~\cite[Lemma II.2.5]{Simader-Sohr:Weak-Dirichlet},
which covers the (bent) halfspace case.

Finally, if $q,\,\hat{q} \in \dot{H}^1_p(\Omega)$ are two solutions to {(DP)}${}^w_{f, h}$,
then $q - \hat{q} \in {}_0 \dot{H}^1_p(\Omega)$ solves
\begin{equation*}
	-(\,\nabla \phi\,|\,\nabla q - \nabla \hat{q}\,) = 0, \quad \phi \in {}_0 \dot{H}^1_{p^\prime}(\Omega),
\end{equation*}
which implies $q = \hat{q}$ again by \cite[Theorem II.1.1]{Simader-Sohr:Weak-Dirichlet} resp.~\cite[Lemma II.2.5]{Simader-Sohr:Weak-Dirichlet}.
\end{proof}

A main ingredient of the proof of the above result was the possibility to construct extensions in $\dot{H}^1_p(\Omega)$ to traces in $\dot{W}^{1 - 1/p}_p(\partial \Omega)$.
On the other hand, \Propref{sd} also implies this possibility in an even refined sense.
\begin{proposition}\label{prop:ext}{\bfseries The Trace Space of $\dot{H}^1_p(\Omega)$.}\newline
Let $\Omega \subseteq \bR^n$ be the half-space, a bent half-space or a bounded domain with boundary $\Gamma = \partial \Omega$ of class $C^1$ and let $1 < p < \infty$.
Then the trace operator
\begin{equation*}
	[\,\cdot\,]: \dot{H}^1_p(\Omega) \longrightarrow \dot{W}^{1 - 1/p}_p(\partial \Omega)
\end{equation*}
is onto and there exist a bounded linear extension operator
\begin{equation*}
	\dot{E}: \dot{W}^{1 - 1/p}_p(\partial \Omega) \longrightarrow \dot{H}^1_p(\Omega),
\end{equation*}
which is characterized by
\begin{equation*}
	-\Delta\,\dot{E} h = 0\ \mbox{in}\ \Omega, \quad [\dot{E} h] = h\ \mbox{on}\ \partial \Omega,
	\quad \quad h \in \dot{W}^{1 - 1/p}_p(\partial \Omega).
\end{equation*}
\end{proposition}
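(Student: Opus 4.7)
The plan is to build $\dot{E}$ directly from the weak Dirichlet problem of Proposition \ref{prop:sd}, which immediately delivers both the surjectivity of the trace and the characterizing PDE. Given $h \in \dot{W}^{1-1/p}_p(\partial\Omega)$, apply Proposition \ref{prop:sd} with $f = 0$ to produce a unique $q \in \dot{H}^1_p(\Omega)$ solving $-\Delta q = 0$ in $\Omega$ weakly and $[q] = h$ on $\partial\Omega$, and set $\dot{E} h := q$. The characterization stated in the proposition is then built into the definition, and surjectivity of the trace operator \eqref{eqn:trace} follows at once from $[\dot{E} h] = h$.

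Linearity of $\dot{E}$ is a consequence of the uniqueness assertion of Proposition \ref{prop:sd} together with the linearity of the weak Dirichlet problem: for $h_1, h_2$ and scalars $\lambda,\mu$, the function $\lambda \dot{E} h_1 + \mu \dot{E} h_2$ is weakly harmonic and has trace $\lambda h_1 + \mu h_2$, hence coincides with $\dot{E}(\lambda h_1 + \mu h_2)$ by uniqueness.

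For the boundedness estimate, the cleanest route is the closed graph theorem after passing to Banach space representatives. Concretely, if $h_n \to h$ in $\dot{W}^{1-1/p}_p(\partial\Omega)$ and $\dot{E} h_n \to Q$ in $\dot{H}^1_p(\Omega)$, then continuity of the trace operator \eqref{eqn:trace} gives $[Q] = h$, while passing to the limit in the weak harmonicity identity $(\,\nabla\phi\,|\,\nabla \dot{E} h_n\,) = 0$ for $\phi \in {}_0 \dot{H}^1_{p^\prime}(\Omega)$ yields weak harmonicity of $Q$; uniqueness in Proposition \ref{prop:sd} therefore forces $Q = \dot{E} h$, so the graph of $\dot{E}$ is closed and the closed graph theorem delivers continuity. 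Alternatively, the estimate may be extracted directly from the construction in the proof of Proposition \ref{prop:sd}, where the only arbitrary step is the initial choice of an extension $\bar{q}$ with $[\bar{q}] = h$; this initial extension can be chosen to depend boundedly and linearly on $h$ by the definition of the quotient norm on $\dot{W}^{1-1/p}_p(\partial\Omega)$, and the remaining variational correction is controlled by the (bounded) solution operator for the weak Dirichlet problem with vanishing trace.

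The main obstacle is the careful handling of the semi-normed nature of $\dot{H}^1_p(\Omega)$ and, correspondingly, of the homogeneous trace space $\dot{W}^{1-1/p}_p(\partial\Omega)$: one must either quotient out the space of constants to obtain honest Banach spaces before invoking the closed graph theorem, or pin down a distinguished representative (for instance via a normalization on a fixed bounded subdomain) so that the argument is insensitive to the residual ambiguity. Once this bookkeeping is fixed, all remaining steps reduce to the existence and uniqueness content of Proposition \ref{prop:sd}.
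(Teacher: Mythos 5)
Your proposal is correct and follows essentially the same route as the paper: the paper's proof consists precisely of defining $\dot{E} h$ as the unique solution to the weak Dirichlet problem ${(\mbox{DP})}_{0, h}$ furnished by \Propref{sd}. Your additional discussion of linearity and of boundedness via the closed graph theorem (with attention to the semi-normed structure) merely fills in details the paper leaves implicit.
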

\begin{proof}
Given $h \in \dot{W}^{1 - 1/p}_p(\partial \Omega)$, define $\dot{E} h \in \dot{H}^1_p(\Omega)$ to be the unique solution to the (weak) Dirichlet problem {(DP)}${}_{0, h}$.
\end{proof}

Another valuable consequence of \Propref{sd} is the validity of the {\itshape Weyl decomposition}
\begin{equation*}
	\label{eqn:weyl}\tag{$\cW$}
	L_p(\Omega,\,\bR^n) = S_p(\Omega) \oplus \nabla {}_0 \dot{H}^1_p(\Omega)
\end{equation*}
for all domains under consideration.
Here
\begin{equation*}
	S_p(\Omega) := \left\{\,f \in L_p(\Omega,\,\bR^n)\,:\,\mbox{div}\,f = 0\,\right\}
\end{equation*}
denotes the space of $L_p$-functions, which are divergence-free in the sense of distributions.
Since both, $S_p(\Omega)$ and $\nabla {}_0 \dot{H}^1_p(\Omega)$ are closed subspaces of $L_p(\Omega,\,\bR^n)$,
this direct decomposition is topological and the thereby induced bounded linear projection
\begin{equation*}
	\cW_p: L_p(\Omega,\,\bR^n) \longrightarrow L_p(\Omega,\,\bR^n)
\end{equation*}
onto $S_p(\Omega)$ along $\nabla {}_0 \dot{H}^1_p(\Omega)$ is called {\itshape Weyl projection}.
Given $f \in L_p(\Omega,\,\bR^n)$, we have $\cW_p f = f - \nabla q \in S_p(\Omega)$,
where $q \in {}_0 \dot{H}^1_p(\Omega)$ is obtained as the unique solution to {(DP)}${}_{-f, 0}$.

On the other hand, we may construct solutions $q \in \hat{H}^1_p(\Omega) := \dot{H}^1_p(\Omega) / \bR$ to the {\itshape Neumann problem}
\begin{equation*}
	\label{eqn:sn}\tag*{${(\mbox{NP})}_{f, h}$}
		- \Delta\,q = \mbox{div}\,f \quad \mbox{in}\ \Omega, \qquad 
		\partial_\nu q + [f] \cdot \nu = h \quad \mbox{on}\ \partial \Omega
\end{equation*}
with data $f \in L_p(\Omega,\,\bR^n)$ and $h \in \dot{W}^{-1/p}_p(\partial \Omega) := \dot{W}^{1 - 1/p^\prime}_{p^\prime}(\partial \Omega)^\prime$,
where $1/p + 1/p^\prime = 1$.
The formulation of this problem requires a proper definition of the left hand side of the boundary condition,
since $\nabla q,\,f \in L_p(\Omega,\,\bR^n)$ do not possess traces in the sense of Sobolev spaces.
However, we may employ a {\itshape generalized normal trace}
\begin{equation*}
	{[\,\cdot\,]}_\nu: S_p(\Omega) \longrightarrow \dot{W}^{-1/p}_p(\partial \Omega),
\end{equation*}
which is defined via
\begin{equation*}
	\langle\,\psi\,|\,{[v]}_\nu\,\rangle := (\,\nabla \dot{E} \psi\,|\,v\,), \quad \quad \psi \in \dot{W}^{1 - 1/p^\prime}_{p^\prime}(\partial \Omega),\ v \in S_p(\Omega).
\end{equation*}
Note, that for $\phi \in \dot{H}^1_{p^\prime}(\Omega)$ we have $\dot{E} [\phi] - \phi \in {}_0 \dot{H}^1_{p^\prime}(\Omega)$ and, therefore,
\begin{equation*}
	\cW_{p^\prime} (\nabla \dot{E} [\phi] - \nabla \phi) = 0.
\end{equation*}
Hence,
\begin{equation*}
	\begin{array}{rcl}
		(\,\nabla \dot{E} [\phi]\,|\,v\,) & = & (\,\nabla \dot{E} [\phi]\,|\,\cW_p v\,) = (\,\cW_{p^\prime} \nabla \dot{E} [\phi]\,|\,v\,) \\[0.5em]
		                                  & = & (\,\cW_{p^\prime} \nabla \phi\,|\,v\,) = (\,\nabla \phi\,|\,\cW_p v\,) = (\,\nabla \phi\,|\,v\,), \quad \phi \in \dot{H}^1_{p^\prime}(\Omega),\ v \in S_p(\Omega)
	\end{array}
\end{equation*}
and the {\itshape generalized principle of partial integration}
\begin{equation*}
	\langle\,[\phi]\,|\,{[v]}_\nu\,\rangle = (\,\nabla \phi\,|\,v\,), \quad \quad \phi \in \dot{H}^1_{p^\prime}(\Omega),\ v \in S_p(\Omega)
\end{equation*}
is available.
Especially, we have
$\langle\,\psi\,|\,{[v]}_\nu\,\rangle = (\,\nabla \phi\,|\,v\,)$, for all $\psi \in \dot{W}^{1 - 1/p^\prime}_{p^\prime}(\partial \Omega)$, $\phi \in \dot{H}^1_{p^\prime}(\Omega)$ with $[\phi] = \psi$ and $v \in S_p(\Omega)$,
which implies the definition of the generalized normal trace to be independent of the particular choice of the extension operator.
The Neumann problem {(NP)}${}_{f, h}$ has to be understood as
\begin{equation*}
	\nabla q + f \in S_p(\Omega) \quad \mbox{and} \quad {[\nabla q + f]}_\nu = h \ \mbox{on} \ \partial \Omega
\end{equation*}
and is therefore equivalent to its weak formulation
\begin{equation*}
	\label{eqn:wn}\tag*{${(\mbox{NP})}^w_{f, h}$}
	- (\,\nabla \phi\,|\,\nabla q\,) = (\,\nabla \phi\,|\,f\,) - \langle\,[\phi]\,|\,h\,\rangle, \quad \phi \in \hat{H}^1_{p^\prime}(\Omega).
\end{equation*}
The next proposition, which is proved in \cite{Simader-Sohr:Weak-Neumann}, shows, that the (weak) Neumann problem also admits maximal regular solutions.
\begin{proposition}\label{prop:sn}{\bfseries The Weak Neumann Problem.}\newline
Let $\Omega \subseteq \bR^n$ be the half-space, a bent half-space or a bounded domain boundary $\Gamma = \partial \Omega$ of class $C^1$ and let $1 < p < \infty$.
Then the (weak) Neumann problem {(NP)}${}_{f, h}$ resp.~{(NP)}${}^w_{f, h}$ admits a unique maximal regular solution $q \in \hat{H}^1_p(\Omega)$,
whenever $f \in L_p(\Omega,\,\bR^n)$ and $h \in \dot{W}^{-1/p}_p(\partial \Omega)$. \hspace*{\fill} $\Box$
\end{proposition}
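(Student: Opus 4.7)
The plan is to parallel the proof of \Propref{sd} but without the extension reduction step, since the Neumann datum $h$ is a functional on the boundary rather than a trace. The key ingredient is again the weak $L_p$-theory developed in \cite{Simader-Sohr:Weak-Neumann}, which asserts that the bilinear form
\begin{equation*}
	a(q,\,\phi) := -(\,\nabla q\,|\,\nabla \phi\,), \qquad q \in \hat{H}^1_p(\Omega),\ \phi \in \hat{H}^1_{p^\prime}(\Omega),
\end{equation*}
induces a topological isomorphism $\hat{H}^1_p(\Omega) \longrightarrow \hat{H}^1_{p^\prime}(\Omega)^\prime$ for all three geometries under consideration.

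First I would verify that the right hand side of ${(\mbox{NP})}^w_{f, h}$, i.\,e.\ the map $\phi \mapsto (\,\nabla \phi\,|\,f\,) - \langle\,[\phi]\,|\,h\,\rangle$, defines a bounded linear functional on $\hat{H}^1_{p^\prime}(\Omega)$. The volume term is controlled via H\"older's inequality by ${\|f\|}_{L_p(\Omega,\bR^n)}\,{|\phi|}_{\dot{H}^1_{p^\prime}(\Omega)}$, while the boundary pairing is controlled by ${\|h\|}_{\dot{W}^{-1/p}_p(\partial\Omega)}$ times the homogeneous trace semi-norm of $[\phi]$, which in turn is dominated by ${|\phi|}_{\dot{H}^1_{p^\prime}(\Omega)}$ thanks to the continuity of the trace \eqnref{trace}. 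Since both contributions depend on $\phi$ only through $\nabla \phi$ and through classes of traces modulo constants, the functional factors through the quotient $\hat{H}^1_{p^\prime}(\Omega)$, and the Simader-Sohr isomorphism then produces a unique $q \in \hat{H}^1_p(\Omega)$ realising ${(\mbox{NP})}^w_{f, h}$.

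Next I would verify the equivalence with the strong formulation ${(\mbox{NP})}_{f, h}$. Testing against $\phi \in C^\infty_0(\Omega)$ immediately yields $-\Delta q = \mbox{div}\,f$ in $\cD^\prime(\Omega)$, so that $\nabla q + f \in S_p(\Omega)$ and its generalized normal trace ${[\nabla q + f]}_\nu \in \dot{W}^{-1/p}_p(\partial\Omega)$ is well-defined. Applying the generalised integration by parts identity recorded just above the proposition to arbitrary $\phi \in \dot{H}^1_{p^\prime}(\Omega)$ transforms ${(\mbox{NP})}^w_{f, h}$ into $\langle\,[\phi]\,|\,{[\nabla q + f]}_\nu\,\rangle = \langle\,[\phi]\,|\,h\,\rangle$ for all admissible $\phi$; the surjectivity of the trace from \Propref{ext} then forces ${[\nabla q + f]}_\nu = h$ in $\dot{W}^{-1/p}_p(\partial\Omega)$, which is the strong Neumann condition.

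The main obstacle is of course the isomorphism result itself, which is the substance of \cite{Simader-Sohr:Weak-Neumann}. Its proof starts from $p = 2$, where coercivity of $a$ on the quotient $\hat{H}^1_2(\Omega)$ follows from a homogeneous Poincar\'e-type inequality, so that Lax-Milgram applies directly; the passage to general $1 < p < \infty$ is then performed by a continuity-in-$p$ argument combined with the a priori estimates proved there. For the bent halfspace, smallness of $\nabla\omega$ enters the perturbation estimates, exactly as in the Dirichlet analogue handled in \Propref{sd}. Since the cited reference covers all three geometries at once, the proof of the proposition as stated reduces to the two verification steps above.
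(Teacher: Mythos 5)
Your proposal is correct and follows essentially the same route as the paper, which proves this proposition simply by appealing to the isomorphism results of \cite{Simader-Sohr:Weak-Neumann}; your additional verification that the right-hand side defines a bounded functional on $\hat{H}^1_{p^\prime}(\Omega)$ and that the weak and strong formulations coincide via the generalized integration by parts identity is exactly the (omitted) routine content. No gap.
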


A first interesting consequence of \Propref{sn} is the pendant of \Propref{ext} for the generalized normal trace operator.
\begin{proposition}\label{prop:genext}{\bfseries The Trace Space of $S_p(\Omega)$.}\newline
Let $\Omega \subseteq \bR^n$ be the half-space, a bent half-space or a bounded domain with boundary $\Gamma = \partial \Omega$ of class $C^1$ and let $1 < p < \infty$.
Then the trace operator
\begin{equation*}
	{[\,\cdot\,]}_\nu: S_p(\Omega) \longrightarrow \dot{W}^{-1/p}_p(\partial \Omega)
\end{equation*}
is onto and there exist a bounded linear extension operator
\begin{equation*}
	\dot{E}_\nu: \dot{W}^{-1/p}_p(\partial \Omega) \longrightarrow S_p(\Omega)
\end{equation*}
with
\begin{equation*}
	\dot{E}_\nu h \in \nabla \dot{H}^1_p(\Omega), \quad {[\dot{E}_\nu h]}_\nu = h\ \mbox{on}\ \partial \Omega,
	\quad \quad h \in \dot{W}^{-1/p}_p(\partial \Omega).
\end{equation*}
\end{proposition}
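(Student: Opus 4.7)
The plan is to mimic the strategy used in Proposition \ref{prop:ext}: given $h \in \dot{W}^{-1/p}_p(\partial \Omega)$, obtain $\dot{E}_\nu h$ by solving the weak Neumann problem with homogeneous bulk data and Neumann data $h$. Concretely, set $\dot{E}_\nu h := \nabla q$, where $q \in \hat{H}^1_p(\Omega)$ is the unique maximal regular solution to ${(\mbox{NP})}^w_{0, h}$ provided by \Propref{sn}. Since only $\nabla q$ (not $q$ itself) enters, the ambiguity by a constant is harmless and $\dot{E}_\nu$ is well-defined on the quotient. Linearity and boundedness of $\dot{E}_\nu$ are then inherited from the solution operator of \Propref{sn} composed with the gradient.

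Two properties of $\dot{E}_\nu h$ must be verified. First, $\dot{E}_\nu h \in S_p(\Omega)$: testing the weak formulation
\begin{equation*}
	-(\,\nabla \phi\,|\,\nabla q\,) = -\langle\,[\phi]\,|\,h\,\rangle, \quad \phi \in \hat{H}^1_{p^\prime}(\Omega),
\end{equation*}
against $\phi \in C^\infty_0(\Omega)$ (for which $[\phi] = 0$) gives $(\nabla \phi\,|\,\nabla q) = 0$, i.e.\ $\mbox{div}\,\nabla q = 0$ in $\cD^\prime(\Omega)$, so indeed $\nabla q \in S_p(\Omega)$. Second, ${[\dot{E}_\nu h]}_\nu = h$: by the definition of the generalized normal trace, for arbitrary $\psi \in \dot{W}^{1 - 1/p^\prime}_{p^\prime}(\partial \Omega)$,
\begin{equation*}
	\langle\,\psi\,|\,{[\nabla q]}_\nu\,\rangle = (\,\nabla \dot{E} \psi\,|\,\nabla q\,),
\end{equation*}
and plugging $\phi = \dot{E} \psi \in \dot{H}^1_{p^\prime}(\Omega)$ into the weak Neumann equation (note $[\dot{E}\psi] = \psi$ by \Propref{ext}) yields $(\,\nabla \dot{E} \psi\,|\,\nabla q\,) = \langle\,\psi\,|\,h\,\rangle$. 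Consequently ${[\dot{E}_\nu h]}_\nu = h$ in $\dot{W}^{-1/p}_p(\partial \Omega)$, which simultaneously establishes surjectivity of ${[\,\cdot\,]}_\nu: S_p(\Omega) \longrightarrow \dot{W}^{-1/p}_p(\partial \Omega)$.

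The only subtlety I foresee is consistency: one must make sure that the definition of ${[\,\cdot\,]}_\nu$ used above does not depend on the choice of the extension $\dot{E}$, and that $\dot{E}\psi \in \hat{H}^1_{p^\prime}(\Omega)$ is an admissible test function for the weak Neumann formulation. The first point is precisely the independence statement already recorded just before \Propref{sn}, and the second is immediate since $\dot{H}^1_{p^\prime}(\Omega)$ surjects onto $\hat{H}^1_{p^\prime}(\Omega)$. With these two remarks in place, the construction is complete, and the claimed extension operator $\dot{E}_\nu$ is automatically a bounded right inverse of ${[\,\cdot\,]}_\nu$ with image in $\nabla \dot{H}^1_p(\Omega)$, as required.
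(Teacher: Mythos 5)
Your proposal is correct and coincides with the paper's own (one-line) proof: define $\dot{E}_\nu h := \nabla q$ where $q$ solves the weak Neumann problem ${(\mbox{NP})}_{0, h}$ via \Propref{sn}. The additional verifications you supply (divergence-freeness by testing against $C^\infty_0(\Omega)$, recovery of the normal trace by testing with $\dot{E}\psi$, and the consistency remarks) are exactly the details the paper leaves implicit, and they are all carried out correctly.
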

\begin{proof}
Given $h \in \dot{W}^{-1/p}_p(\partial \Omega)$, define $q \in \hat{H}^1_p(\Omega)$ to be the unique solution to {(NP)}${}_{0, h}$ and set $\dot{E}_\nu h := \nabla q$.
\end{proof}

Last, but not least, \Propref{sn} implies the {\itshape Helmholtz decomposition}
\begin{equation*}
	L_p(\Omega,\,\bR^n) = {}_0 S_p(\Omega) \oplus \nabla \dot{H}^1_p(\Omega)
\end{equation*}
to be valid for all domains under consideration and all $1 < p < \infty$.
Here
\begin{equation*}
	{}_0 S_p(\Omega) := \mbox{cls}\,\left(\left\{\,\phi \in C^\infty_0(\Omega,\,\bR^n)\,:\,\mbox{div}\,\phi = 0\,\right\},\,{|\,\cdot\,|}_{L_p(\Omega, \bR^n)}\right) \subseteq L_p(\Omega,\,\bR^n)
\end{equation*}
denotes the closure of the space of solenoidal, compactly supported, smooth vector fields in $L_p(\Omega,\,\bR^n)$,
which may be characterized as
\begin{equation*}
	{}_0 S_p(\Omega) = \left\{\,\phi \in S_p(\Omega)\,:\,{[\phi]}_\nu = 0\,\right\}.
\end{equation*}
Since both, ${}_0 S_p(\Omega)$ and $\nabla \dot{H}^1_p(\Omega)$ are closed subspaces of $L_p(\Omega,\,\bR^n)$,
this direct decomposition is topological and the thereby induced bounded linear projection
\begin{equation*}
	\cH_p: L_p(\Omega,\,\bR^n) \longrightarrow L_p(\Omega,\,\bR^n)
\end{equation*}
onto ${}_0 S_p(\Omega)$ along $\nabla \dot{H}^1_p(\Omega)$ is called {\itshape Helmholtz projection}.
Given $f \in L_p(\Omega,\,\bR^n)$, we have $\cH_p f = f - \nabla q \in {}_0 S_p(\Omega)$,
where $q \in \hat{H}^1_p(\Omega)$ is obtained as the unique solution to {(NP)}${}_{-f, 0}$.

With the above definitions and results at hand, we may deal with an inhomogeneous divergence condition in \ref{eqn:s-data} by defining a suitable pressure,
whose gradient forces the velocity field to develop the requested divergence.
\begin{proposition}\label{prop:div}{\bfseries A Divergence Adjusting Pressure.}\newline
Let $a > 0$ and let $\Omega \subseteq \bR^n$ be the half-space, a bent half-space or a bounded domain with boundary $\Gamma = \partial \Omega$ of class $C^3$ and let $1 < p < \infty$.
Let
\begin{equation*}
	g \in W^{1/2}_p((0,\,a),\,L_p(\Omega)) \cap L_p((0,\,a),\,H^1_p(\Omega)),
\end{equation*}
such that there exists
\begin{equation*}
	\begin{array}{c}
		\eta \in W^{1 - 1/2p}_p((0,\,a),\,L_p(\partial \Omega)) \cap L_p((0,\,a),\,W^{2 - 1/p}_p(\partial \Omega)) \\[0.5em]
		\mbox{with} \quad (\,g,\,\eta\,) \in H^1_p((0,\,a),\,{(H^1_{p^\prime}(\Omega),\,{|\,\cdot\,|}_{\dot{H}^1_{p^\prime}(\Omega)})}^\prime),
	\end{array}
\end{equation*}
where $1/p + 1/p^\prime = 1$.
Then there exists
\begin{equation*}
	\begin{array}{c}
		q \in L_p((0,\,a),\,{}_0 \dot{H}^1_p(\Omega)) \\[0.5em]
		\mbox{with} \quad - \mbox{div}\,\nabla q = (\rho \partial_t - \mu \Delta) g \quad \mbox{in} \ \cD^\prime(\Omega).
	\end{array}
\end{equation*}
\end{proposition}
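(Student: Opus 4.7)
The plan is to interpret the equation $-\mbox{div}\,\nabla q = (\rho \partial_t - \mu \Delta) g$ in $\cD^\prime(\Omega)$ as a weak Dirichlet problem with vanishing boundary data and distributional right-hand side, and to invoke \Propref{sd}. Testing against $\phi \in C^\infty_0(\Omega)$ and using that the boundary contribution to $(g,\,\eta)$ vanishes on compactly supported functions, the equation becomes, for almost every $t \in (0,\,a)$,
\[
  (\,\nabla \phi\,|\,\nabla q(t)\,) = -\rho\,\langle\,\phi\,|\,\partial_t(g,\,\eta)(t)\,\rangle + \mu\,(\,\nabla \phi\,|\,\nabla g(t)\,), \quad \phi \in C^\infty_0(\Omega).
\]
By the hypothesis on $(g,\,\eta)$ and since $\nabla g \in L_p((0,\,a),\,L_p(\Omega,\,\bR^n))$, the right hand side defines a functional $L(t)$ on $H^1_{p^\prime}(\Omega)$ that is bounded with respect to the seminorm ${|\,\cdot\,|}_{\dot{H}^1_{p^\prime}(\Omega)}$, with operator norm in $L_p(0,\,a)$. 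By density of $C^\infty_0(\Omega)$ in ${}_0 \dot{H}^1_{p^\prime}(\Omega)$, $L(t)$ extends uniquely to a bounded linear functional on ${}_0 \dot{H}^1_{p^\prime}(\Omega)$, so that it suffices to produce $q(t) \in {}_0 \dot{H}^1_p(\Omega)$ satisfying $(\,\nabla \phi\,|\,\nabla q(t)\,) = L(t)(\phi)$ for all $\phi \in {}_0 \dot{H}^1_{p^\prime}(\Omega)$.

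To bring $L(t)$ into the form required by \Propref{sd}, I would use that the gradient map $\mbox{Grad}: {}_0 \dot{H}^1_{p^\prime}(\Omega) \longrightarrow L_{p^\prime}(\Omega,\,\bR^n)$, $\phi \mapsto \nabla \phi$, is an isometric embedding (by the very definition of the homogeneous seminorm). By Hahn-Banach, the functional $\mbox{Grad}\,\phi \mapsto -L(t)(\phi)$ on $\mbox{range}(\mbox{Grad})$ extends, without increase of norm, to a bounded functional on all of $L_{p^\prime}(\Omega,\,\bR^n)$; by reflexivity of $L_{p^\prime}$ this extension is represented as $v \mapsto (\,v\,|\,f(t)\,)$ for some $f(t) \in L_p(\Omega,\,\bR^n)$ with ${\|f(t)\|}_{L_p} \leq {\|L(t)\|}_\ast$. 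Then \Propref{sd}, applied to the weak Dirichlet problem {(DP)}${}^w_{f(t),\,0}$, produces a unique $q(t) \in {}_0 \dot{H}^1_p(\Omega)$ satisfying $-(\,\nabla \phi\,|\,\nabla q(t)\,) = (\,\nabla \phi\,|\,f(t)\,) = -L(t)(\phi)$, which is precisely the desired weak formulation.

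The main obstacle is that the Hahn-Banach extension yielding $f(t)$ is non-canonical, so measurability of $t \mapsto f(t)$ is not immediate. This is circumvented by the uniqueness statement in \Propref{sd}: the solution $q(t)$ is uniquely determined by $L(t)$ alone, independently of the particular representative $f(t)$. Consequently, the assignment $L \mapsto q$ defines a bounded linear operator from $({}_0 \dot{H}^1_{p^\prime}(\Omega))^\prime$ into ${}_0 \dot{H}^1_p(\Omega)$, whose pointwise-in-time application to $L(\cdot) \in L_p((0,\,a),\,({}_0 \dot{H}^1_{p^\prime}(\Omega))^\prime)$ yields the required $q \in L_p((0,\,a),\,{}_0 \dot{H}^1_p(\Omega))$.
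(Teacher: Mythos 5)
Your argument is correct, but it follows a genuinely different route from the paper's. The paper extends $(\,g,\,\eta\,)$ to $F \in H^1_p((0,\,a),\,\dot{H}^1_{p^\prime}(\Omega)^\prime)$ and solves the weak \emph{Neumann} problem $(\,\nabla \phi\,|\,\nabla \hat{q}\,) = \langle\,\phi\,|\,F\,\rangle$ via \Propref{sn}; this produces a potential $\hat{q}$ with $\mbox{div}\,\nabla \hat{q} = g$, $\partial_t \nabla \hat{q} \in L_p(L_p)$ and $\Delta \nabla \hat{q} = \nabla g \in L_p(L_p)$, so that $(\rho \partial_t - \mu \Delta) \nabla \hat{q}$ is an explicit, canonical --- hence measurable-in-$t$ --- $L_p$ representative of your functional $L(t)$; the final step $\nabla q = -(\cI - \cW_p)(\rho \partial_t - \mu \Delta) \nabla \hat{q}$ is then precisely the weak Dirichlet solve with vanishing trace that you also perform. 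You instead treat $L(t)$ abstractly as an element of $({}_0 \dot{H}^1_{p^\prime}(\Omega))^\prime$, manufacture an $L_p$ representative by Hahn--Banach, and repair the resulting loss of measurability through the uniqueness in \Propref{sd}. Both proofs thus reduce to the same Simader--Sohr variational theory; yours is shorter and skips the Neumann solve, at the price of the non-constructive representation step and of needing the solution operator of \Propref{sd} to be bounded as a map $({}_0 \dot{H}^1_{p^\prime}(\Omega))^\prime \longrightarrow {}_0 \dot{H}^1_p(\Omega)$ --- this is the Simader--Sohr variational inequality (or an open mapping argument) and deserves one explicit sentence, since \Propref{sd} as stated only asserts existence and uniqueness. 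The paper's detour buys an explicit formula for $\nabla q$ in terms of the Neumann potential, with measurability and the quantitative bound coming for free.
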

\begin{proof}
Since $H^1_{p^\prime}(\Omega)$ is a dense subspace of $\dot{H}^1_{p^\prime}(\Omega)$,
there exists a unique extension $F \in H^1_p((0,\,a),\,{}_0 \dot{H}^{-1}_p(\Omega))$ to $(\,g,\,\eta\,)$,
where ${}_0 \dot{H}^1_p(\Omega) := \dot{H}^1_{p^\prime}(\Omega)^\prime$.
Now, we define $\hat{q} \in L_p((0,\,a),\,\hat{H}^1_p(\Omega))$ be the unique solution to
\begin{equation*}
	(\,\nabla \phi\,|\,\nabla \hat{q}\,) = \langle\,\phi\,|\,F\,\rangle, \quad \phi \in \hat{H}^1_{p^\prime}(\Omega),
\end{equation*}
which is available thanks to \Propref{sn}, cf.~also \cite{Simader-Sohr:Weak-Neumann}.
On one hand, we have $\partial_t \nabla \hat{q} \in L_p((0,\,a),\,L_p(\Omega,\,\bR^n))$,
and, on the other hand,
\begin{equation*}
  (\,\nabla \phi\,|\,\nabla \hat{q}\,) = \langle\,\phi\,|\,F\,\rangle = \langle\,\phi\,|\,(\,g,\,\eta\,)\,\rangle = - \int_\Omega \phi g\,\mbox{d}x,
	\quad \phi \in C^\infty_0(\Omega)
\end{equation*}
implies $\Delta \hat{q} = \mbox{div}\,\nabla \hat{q} = g$ and hence
$\Delta \nabla \hat{q} = \nabla \Delta \hat {q} = \nabla g \in L_p((0,\,a),\,L_p(\Omega,\,\bR^n))$.
Finally, we define $q \in L_p((0,\,a),\,{}_0 \dot{H}^1_p(\Omega))$ via $\nabla q = - (\cI - \cW_p)(\rho \partial_t - \mu \Delta) \nabla \hat{q}$
and infer
\begin{equation*}
	\begin{array}{rcl}
		- \mbox{div}\,\nabla q & = & \mbox{div}\,(\cI - \cW_p)(\rho \partial_t - \mu \Delta) \nabla \hat{q} = \mbox{div}\,(\rho \partial_t - \mu \Delta) \nabla \hat{q} \\[0.5em]
		                       & = & (\rho \partial_t - \mu \Delta)\,\mbox{div}\,\nabla \hat{q} = (\rho \partial_t - \mu \Delta) g \quad \mbox{in} \ \cD^\prime(\Omega).
	\end{array}
\end{equation*}
\end{proof}

Using \Propref{div}, we may formulate the promised splitting scheme,
which may serve to eliminate almost all inhomogeneous data for the Stokes equations \ref{eqn:s-data}.
\begin{theorem}\label{thm:splitting}{\bfseries A Splitting Scheme.}\newline
Let $a > 0$ and let $\Omega \subseteq \bR^n$ be the half-space, a bent half-space or a bounded domain.
Let $\Gamma = \partial \Omega$ be of class $C^{3-}$ and let $1 < p < \infty$, $p \neq \frac{3}{2},\,3$.
Let $\cB = \cB^{\alpha, \beta}$ with $\alpha,\,\beta \in \{\,-1,\,0,\,1\,\}$ be one of the linear operators ($\cB$),
which realizes one of the boundary conditions (B).

If $\beta =  0$ let $\gamma = -\infty$;
if $\beta =  1$ let $\gamma \in \{\,-\infty\,\} \cup [0,\,1/2 - 1/2p]$;
if $\beta = -1$ let $\gamma \in \{\,-\infty\,\} \cup [0,\,\infty)$.

Then the unique maximal regular solution
\begin{equation*}
	u \in \bX_u(a), \quad p \in \bX^\beta_{p, \gamma}(a)
\end{equation*}
to the Stokes equations {(S)}${}^{a, \Omega, \cB}_{f, g, h, u_0}$ with data
\begin{equation*}
	f \in \bY_f(a), \quad g \in \bY_g(a), \quad h \in \bY^{\alpha, \beta}_{h, \gamma}(a), \quad u_0 \in \bY_u
\end{equation*}
satisfying the compatibility conditions {(C)}${}^{\alpha, \beta}_{g, h, u_0}$ may be obtained as follows:
\begin{enumerate}[1.]
	\item
		If $\beta = 0$, choose $\eta = h \cdot \nu$,
		define $\hat{q} \in L_p((0,\,a),\,{}_0 \dot{H}^1_p(\Omega))$ via \Propref{div} based on $g$ and $\eta$ and set $\bar{q} = \hat{q} + \bR$;
		if $\beta \in \{\,-1,\,1\,\}$, choose $\eta \in \bN^0_{h, -\infty}(a)$ according to the compatibility condition (C3)${}^\beta_{g, h, u_0}$,
		define $\hat{q} \in L_p((0,\,a),\,{}_0 \dot{H}^1_p(\Omega))$ via \Propref{div} based on $g$ and $\eta$ and set $\bar{q} = \hat{q} - \dot{E} (h \cdot \nu)$.
	\item
		Choose $v \in \bX_u(a)$ to be the unique maximal regular solution to the parabolic problem
		\begin{equation*}
			\begin{array}{c}
				\rho \partial_t v - \mu \Delta v = - \nabla \bar{q} + \rho \cW_p f, \quad \mbox{in}\ (0,\,a) \times \Omega, \\[0.5em]
				P_\Gamma \cB^{\alpha}(v) = P_\Gamma h, \quad [\mbox{div}\,v] = [g] \quad \mbox{on}\ (0,\,a) \times \partial \Omega, \\[0.5em]
				v(0) = u_0 \quad \mbox{in}\ \Omega.
			\end{array}
		\end{equation*}
		and define $q \in \bX^\beta_{p, \gamma}(a)$ by means of $\nabla q = \nabla \bar{q} + \rho (\cI - \cW_p) f$.
		Then, if $\beta = -1$, $u = v$ and $p = q$ constitute the unique maximal regular solution to the Stokes equations {(S)}${}^{a, \Omega, \cB}_{f, g, h, u_0}$.
	\item
		If $\beta \in \{\,0,\,1\,\}$, the solution to the Stokes equations {(S)}${}^{a, \Omega, \cB}_{f, g, h, u_0}$ splits as $u = v + \bar{u}$ and $p = q + \bar{p}$, where
		\begin{equation*}
			\bar{u} \in \bX_u(a), \quad \bar{p} \in \bX^\beta_{p, \gamma}(a)
		\end{equation*}
		denotes the unique maximal regular solution to the Stokes equations {(S)}${}^{a, \Omega, \cB}_{0, 0, \bar{h}, 0}$ with $\bar{h} = h - \cB^{\alpha, \beta}(v,\,q)$.
		In particular $P_\Gamma \bar{h} = 0$
		and
		\begin{equation*}
			Q_\Gamma \bar{h} \in {}_0 H^1_p((0,\,a),\,\dot{W}^{-1/p}_p(\Gamma,\,N\Gamma)) \cap \bN^0_{h, -\infty}(a), \quad \mbox{if} \ \beta = 0,
		\end{equation*}
		resp.
		\begin{equation*}
			Q_\Gamma \bar{h} \in \bN^{+1}_{h, 1/2 - 1/2p}(a), \quad \mbox{if} \ \beta = +1.
		\end{equation*}
\end{enumerate}
\end{theorem}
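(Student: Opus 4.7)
The plan is to verify that the three-step construction produces a pair $(u,p)$ satisfying all of the equations; uniqueness is guaranteed by \Thmref{s}, which is ultimately invoked for the reduced problem. The scheme is designed so that $\bar q$ absorbs the inhomogeneous divergence $g$ and, when $\beta \in \{-1,+1\}$, the inhomogeneous pressure trace $h\cdot\nu$, while the parabolic problem in step 2 handles the tangential boundary datum $P_\Gamma h$, the solenoidal part of $f$, and the initial datum $u_0$.

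First, \Propref{div} yields $\hat q \in L_p((0,a),{}_0\dot H^1_p(\Omega))$ with $-\Delta\hat q = (\rho\partial_t - \mu\Delta)g$; since $\dot E(h\cdot\nu)$ is harmonic, the same identity holds for $\bar q$, and for $\beta \in \{-1,+1\}$ one additionally has $[\bar q] = -h\cdot\nu$. The right-hand side $-\nabla\bar q + \rho\cW_p f$ then lies in $\bY_f(a)$, and the data $(P_\Gamma h,[g],u_0)$ satisfy the regularity and trace compatibility (C2)$^\alpha$ required for the parabolic problem; granting its well-posedness, we obtain $v \in \bX_u(a)$. Setting $w := \mbox{div}\,v - g$ and using $\mbox{div}\,\cW_p f = 0$ gives $(\rho\partial_t - \mu\Delta)w = 0$ in $\Omega$, while $[w] = 0$ on $\Gamma$ and $w(0) = 0$ by (C1); uniqueness for the heat equation then yields $\mbox{div}\,v = g$ throughout. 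Defining $q$ by $\nabla q = \nabla\bar q + \rho(\cI-\cW_p)f$, the momentum equation $\rho\partial_t v - \mu\Delta v + \nabla q = \rho f$ is immediate, and $[q] = [\bar q]$ because $(\cI-\cW_p)f \in \nabla{}_0\dot H^1_p(\Omega)$ has vanishing trace. In the case $\beta = -1$ this closes the argument, since $-[q] = h\cdot\nu$ recovers the missing normal boundary condition.

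For $\beta \in \{0,+1\}$ only the normal boundary condition may still fail, so we analyse the residual $\bar h := h - \cB^{\alpha,\beta}(v,q)$. The equality $P_\Gamma\bar h = 0$ is built into the parabolic problem. For $\beta = +1$, the identity $[q] = -h\cdot\nu$ reduces $Q_\Gamma\bar h\cdot\nu$ to $-2\mu\partial_\nu v\cdot\nu$, which lies in $\bN^{+1}_{h,1/2-1/2p}(a)$ by trace theory for $v\in\bX_u(a)$. For $\beta = 0$, the identity $Q_\Gamma\bar h\cdot\nu = h\cdot\nu - [v]\cdot\nu$ and trace theory give membership in $\bN^0_{h,-\infty}(a)$, while the additional $H^1_p$-in-time regularity with values in $\dot W^{-1/p}_p(\Gamma,N\Gamma)$ follows from $\mbox{div}\,v = g$ together with compatibility (C3)$^0$: the identity $\langle\,\phi\,|\,(\mbox{div}\,v - g,\,[v]\cdot\nu - h\cdot\nu)\,\rangle = -\langle\,[\phi]\,|\,Q_\Gamma\bar h\cdot\nu\,\rangle$ transfers the $H^1_p$-time regularity of $(g,h\cdot\nu)$ to the pure boundary functional $Q_\Gamma\bar h\cdot\nu$, after observing that $(\mbox{div}\,v,[v]\cdot\nu)$ has this regularity for any $v\in\bX_u(a)$ by the a~priori bound already established in \Secref{main}. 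The initial value $\bar h(0) = 0$ follows from the compatibility $Q_\Gamma[u_0] = Q_\Gamma h(0)$ together with $v(0) = u_0$.

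The main obstacle is the well-posedness of the parabolic problem in step 2, whose boundary condition $[\mbox{div}\,v] = [g]$ is not of standard type; I plan to appeal directly to the $L_p$-maximal regularity theory for parabolic systems with divergence-type boundary conditions developed in the appendices. Once that is available, the reduced data $(0,0,\bar h,0)$ satisfies (C1) trivially, (C2)$^\alpha$ via $P_\Gamma\bar h(0) = 0$, and (C3)$^\beta$ (taking $\eta = 0$ for $\beta = +1$, and using the improved regularity established above for $\beta = 0$), so \Thmref{s} applied to the reduced problem with vanishing bulk and tangential data provides $(\bar u,\bar p)$ and completes the decomposition $u = v + \bar u$, $p = q + \bar p$.
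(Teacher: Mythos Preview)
Your proposal is correct and follows essentially the same approach as the paper: construct $\bar q$ via \Propref{div} (adjusted by $\dot E(h\cdot\nu)$ when $\beta\in\{-1,+1\}$), solve the auxiliary parabolic problem from the appendices for $v$, verify $\mbox{div}\,v = g$ via uniqueness for the heat equation, and read off the residual boundary datum. You actually supply more detail than the paper does in two places: the verification that $Q_\Gamma\bar h$ inherits the ${}_0H^1_p((0,a),\dot W^{-1/p}_p)$ regularity when $\beta=0$ (via the functional identity transferring the $H^1_p$-in-time bound from $(g,h\cdot\nu)$ and $(\mbox{div}\,v,[v]\cdot\nu)$ to the pure boundary term), and the explicit check of the compatibility conditions for the reduced problem --- the paper's proof simply records what $\hat h$ is and leaves these regularity claims implicit, deferring the logical closure to \Remref{splitting}.
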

Before we prove this theorem, some remarks seem to be in order.
In step 2 we used the abbreviation
\begin{equation*}
	P_\Gamma \cB^\alpha(v) = P_\Gamma \cB^{\alpha, \beta}(v,\,q),
\end{equation*}
since the tangential part of the boundary condition neither depends on the parameter $\beta$ nor on the pressure $q$.
Analogously, we will make frequently use of the abbreviation
\begin{equation*}
	Q_\Gamma \cB^\beta(v,\,p) = Q_\Gamma \cB^{\alpha, \beta}(v,\,q),
\end{equation*}
since the normal part of the boundary condition does not depend on the parameter $\alpha$.
The parabolic problem employed in step 2 will be treated in the appendix.
Indeed, there exists a unique maximal regular solution $v \in \bX_u(a)$, thanks to the regularity assumptions on the data
and the compatibility conditions (C1)${}_{g, u_0}$ and (C2)${}^\alpha_{h, u_0}$.
The overall strategy for the proof of \Thmref{splitting} in connection with the proof of \Thmref{s} will be as follows.
\begin{remark}\label{rem:splitting}
In the following proof of \Thmref{splitting},
we will only show, that the solution constructed in steps 1 and 2 has the desired regularity properties and solves the Stokes equations,
where the normal part of the desired boundary condition may be violated if $\beta \in \{\,0,\,1\,\}$.
In these cases, the first two steps reduce the task to prove the existence of unique maximal regular solutions to the Stokes equations as claimed in \Thmref{s}
to the special case $f = 0$, $g = 0$, $P_\Gamma h = 0$ and $u_0 = 0$
with $Q_\Gamma h \in H^1_p((0,\,a),\,\dot{W}^{-1/p}_p(\Gamma,\,N\Gamma)) \cap \bN^0_{h, -\infty}(a)$, if $\beta = 0$, resp.~$\gamma = 1/2 - 1/2p$, if $\beta = 1$.
Once this additional task is accomplished, \Thmref{s} and \Thmref{splitting} will be completely proved in these cases, cf.~also \Remref{s}.
On the other hand, if $\beta = -1$, the first two steps establish the existence of a maximal regular solution to the Stokes equations and it remains to prove its uniqueness
to complete the proof of \Thmref{s} and \Thmref{splitting}.
However, if $(u,\,p) \in \bX_u(a) \times \bX^{-1}_{p, \infty}(a)$ constitutes a maximal regular solution to the Stokes equations {(S)}${}^{a, \Omega, \cB}_{0, 0, 0, 0}$
and $\beta = -1$, then $p \in L_p((0,\,a),\,{}_0 \dot{H}^1_p(\Omega))$ and the momentum balance delivers
\begin{equation*}
	-(\,\nabla \phi\,|\,\nabla p\,) = 0, \quad \quad \phi \in C^\infty_0(\Omega),
\end{equation*}
which implies $p = 0$ by \Propref{sn}, and, hence, $u = 0$ by uniqueness of the solution to the corresponding parabolic problem.
Therefore, the proofs of \Thmref{s} and \Thmref{splitting} for the case $\beta = -1$ will be complete.
\end{remark}
\begin{proof}[Proof of \Thmref{splitting}]
First note, that by construction we always have
\begin{equation*}
	\hat{q} \in L_p((0,\,a),\,{}_0 \dot{H}^1_p(\Omega)), \quad - \mbox{div}\,\nabla \hat{q} = (\rho \partial_t - \mu \Delta) g \ \mbox{in} \ \cD^\prime(\Omega)
\end{equation*}
and therefore
\begin{equation*}
	\bar{q} \in \bX^\beta_{p, \gamma}(a), \quad - \mbox{div}\,\nabla \bar{q} = (\rho \partial_t - \mu \Delta) g \ \mbox{in} \ \cD^\prime(\Omega)
\end{equation*}
and $\bar{q}$ satisfies the boundary condition
\begin{equation*}
	-[\bar{q}] = h \cdot \nu, \quad \mbox{if} \ \beta \in \{\,-1,\,1\,\}.
\end{equation*}
Hence, $q \in \bX^\beta_{p, \gamma}(a)$ enjoys the same property on the boundary.
Moreover, $v \in \bX_u(a)$ by the maximal regularity property of the parabolic system and
\begin{equation*}
	\rho \partial_t v - \mu \Delta v + \nabla q = \rho \partial_t v - \mu \Delta v + \nabla \bar{q} + \rho (\cI - \cW_p) f = \rho f \quad \mbox{in}\ (0,\,a) \times \Omega,
\end{equation*}
which shows, that the momentum balance is valid for $v$ and $q$.
Applying the divergence in the sense of distributions to the partial differential equation used to obtain $v$
and using the properties of $\bar{q}$ as well as the compatibility condition (C1)${}_{g, u_0}$, we derive
\begin{equation*}
	\begin{array}{c}
		\rho \partial_t (\mbox{div}\,v - g) - \mu \Delta (\mbox{div}\,v - g) = 0 \quad \mbox{in}\ (0,\,a) \times \Omega, \\[0.5em]
		[\mbox{div}\,v - g] = 0 \quad \mbox{on}\ (0,\,a) \times \partial \Omega, \\[0.5em]
		(\mbox{div}\,v - g)(0) = 0 \quad \mbox{in}\ \Omega.
	\end{array}
\end{equation*}
Hence, $\mbox{div}\,v = g$ by uniqueness of weak solutions to the diffusion equation with Dirichlet boundary conditions.
Last, but not least, $v$ satisfies the tangential part of the desired boundary condition and the initial condition $v(0) = u_0$.

Now, if $\beta = -1$, $v$ and $q$ constitute the unique maximal regular solution to the Stokes equations {(S)}${}^{a, \Omega, \cB}_{f, g, h, u_0}$.
On the other hand, if $\beta \in \{\,0,\,1\,\}$, $\bar{u} = u - v$ and $\bar{p} = p - q$ solve the Stokes equations
\begin{equation*}
	\begin{array}{c}
		\rho\partial_t \bar{u} - \mu \Delta \bar{u} + \nabla \bar{p} = 0,
		\quad \quad \mbox{div}\,\bar{u} = 0
			\quad \quad \mbox{in}\ (0,\,a) \times \Omega, \\[0.5em]
		P_\Gamma \cB^\alpha(\bar{u}) = 0, \quad \quad Q_\Gamma \cB^\beta(\bar{u},\,\bar{p}) = \hat{h}
			\quad \quad \mbox{on}\ (0,\,a) \times \partial\Omega, \\[0.5em]
		\bar{u}(0) = 0
			\quad \quad \mbox{in}\ \Omega
	\end{array}
\end{equation*}
with $\hat{h} = Q_\Gamma (h - [v])$, if $\beta = 0$, resp.~$\hat{h} = - 2 \mu\,\partial_\nu v \cdot \nu$, if $\beta = 1$.
\end{proof}

\section{The Halfspace Case}\label{sec:halfspace}
This section is devoted to the proof of \Thmref{s} in the halfspace case,
i.\,e. we assume $a > 0$, $\Omega = \bR^n_+$ and $1 < p < \infty$ with $p \neq \frac{3}{2},\,3$.
Following \Remref{s} it is sufficient to prove existence of a unique maximal regular solution for all data satisfying the stated regularity and compatibility conditions.
However, following \Remref{splitting} we may restrict the proof to the case $\alpha \in \{\,-1,\,0,\,+1\,\}$, $\beta \in \{\,0,\,+1\,\}$, $f = 0$, $g = 0$, $P_\Gamma h = 0$ and $u_0 = 0$.
Moreover, it is sufficient to consider the shifted equations with $a = \infty$,
since these also deliver maximal regular solutions to the Stokes equations on finite time intervals.

Due to the simple geometry of the domain we may split the spatial variable into a tangential part $x \in \bR^{n - 1}$ and a normal part $y > 0$.
Moreover, we may split the velocity field as $u = (v,\,w)$ into a tangential part $v: [0,\,\infty) \times \bR^n_+ \longrightarrow \bR^{n - 1}$ and a normal part $w: [0,\,\infty) \times \bR^n_+ \longrightarrow \bR$
to obtain the system of interior partial differential equations
\begin{equation*}
	\begin{array}{rcll}
		\rho \epsilon v + \rho \partial_t v - \mu \Delta_x v - \mu \partial^2_y v + \nabla_x p   & = & 0 & \quad \mbox{in} \ (0,\,\infty) \times \bR^n_+, \\[0.5em]
		\rho \epsilon w + \rho \partial_t w - \mu \Delta_x w - \mu \partial^2_y w + \partial_y p & = & 0 & \quad \mbox{in} \ (0,\,\infty) \times \bR^n_+, \\[0.5em]
		\nabla_x \cdot v + \partial_y w                                                          & = & 0 & \quad \mbox{in} \ (0,\,\infty) \times \bR^n_+,
	\end{array}
\end{equation*}
which have to be complemented by the given initial and boundary conditions.
The arbitrary parameter $\epsilon > 0$ denotes the shift.
Due to $v(0) = 0$ and $w(0) = 0$ we may employ a Laplace transformation in time and a Fourier transformation in the tangential variable to obtain the transformed system
\begin{equation*}
	\begin{array}{rcll}
		\omega^2 \hat{v} - \mu \partial^2_y \hat{v} + i \xi \hat{p}      & = & 0 & \quad \mbox{Re}\,\lambda \geq 0,\,\xi \in \bR^{n - 1},\,y > 0, \\[0.5em]
		\omega^2 \hat{w} - \mu \partial^2_y \hat{w} + \partial_y \hat{p} & = & 0 & \quad \mbox{Re}\,\lambda \geq 0,\,\xi \in \bR^{n - 1},\,y > 0, \\[0.5em]
		i \xi^{\sf{T}} \hat{v} + \partial_y \hat{w}                      & = & 0 & \quad \mbox{Re}\,\lambda \geq 0,\,\xi \in \bR^{n - 1},\,y > 0,
	\end{array}
\end{equation*}
where $\hat{v}$, $\hat{w}$ and $\hat{p}$ denote the transformed components of the solution,
$\lambda \in \bC$ denotes the Laplace co-variable and $\xi \in \bR^{n - 1}$ denotes the Fourier co-variable of $x$.
Moreover, we used the abbreviations
\begin{equation*}
	\lambda_\epsilon := \epsilon + \lambda, \qquad \omega := \sqrt{\rho \lambda_\epsilon + \mu |\xi|^2}.
\end{equation*}
Now, an exponential ansatz leads to the representation
\widearray
\begin{equation*}
	\left[\begin{array}{c} \hat{v}(\lambda,\,\xi,\,y) \\[0.5em] \hat{w}(\lambda,\,\xi,\,y) \\[0.5em] \hat{p}(\lambda,\,\xi,\,y) \end{array}\right]
	= \left[\begin{array}{rr} \omega & - i \zeta \\[0.5em] i \zeta^{\sf{T}} & |\zeta| \\[0.5em] 0 & \kappa \lambda_\epsilon \end{array}\right]
	  \left[\begin{array}{rl} \hat{z}_v(\lambda,\,\xi) & \!\!\!\!\!\!\! e^{- \frac{\omega}{\sqrt{\mu}} y} \\[0.5em] \hat{z}_w(\lambda,\,\xi) & \!\!\!\!\!\!\! e^{- |\xi| y} \end{array}\right],
	\ \begin{array}{l} \mbox{Re}\,\lambda \geq 0, \\[0.5em] \xi \in \bR^{n - 1},\,y > 0, \end{array}
\end{equation*}
\narrowarray
where we have set $\zeta := \sqrt{\mu}\,\xi$ and $\kappa := \rho \sqrt{\mu}$
and where $\hat{z}_v$ and $\hat{z}_w$ denote the transformed components of a function $z = (z_v,\,z_w): [0,\,\infty) \times \bR^{n - 1} \longrightarrow \bR^n$,
which has to be determined via the boundary conditions.
We will treat the various boundary conditions under consideration separately in the next subsections.
Note, that the normal component of the boundary datum $h$ is the only non-zero part of the data.
In the sequel we will denote it by $h_w = - h \cdot \nu = h_n$.
Also note, that every boundary condition will lead to a linear system
\begin{equation*}
	\hat{\cB}^{\alpha, \beta}(\lambda,\,\xi) \left[\begin{array}{c} \hat{z}_v(\lambda,\,\xi) \\[0.5em] \hat{z}_w(\lambda,\,\xi) \end{array}\right] = \left[\begin{array}{c} 0 \\[0.5em] \hat{h}_w(\lambda,\,\xi) \end{array}\right],
	\quad \mbox{Re}\,\lambda > 0,\,\xi \in \bR^{n - 1},
\end{equation*}
with a family of linear operators $\hat{\cB}^{\alpha, \beta}(\lambda,\,\xi)$, which will be shown to uniquely determine $\hat{z}(\lambda,\,\xi)$ for all $\mbox{Re}\,\lambda \geq 0$ and $\xi \in \bR^{n - 1}$.

\subsection*{The Case $\alpha = 0$ and $\beta = 0$}
Here we have to treat the plain Dirichlet conditions
\begin{equation*}
	[v] = 0 \quad \mbox{and} \quad [w] = h_w \quad \mbox{on} \ (0,\,\infty) \times \partial \bR^n_+,
\end{equation*}
and we may assume $h_w \in {}_0 \dot{H}^1_p((0,\,\infty),\,\dot{W}^{-1/p}_p(\bR^{n - 1})) \cap \bN^0_{h, -\infty}(\infty)$ by \Thmref{splitting}.
The boundary conditions lead to
\widearray
\begin{equation*}
	\hat{\cB}^{0, 0}(\lambda,\,\zeta)
	= \left[\begin{array}{rr} \omega & - i \zeta \\[1em] i \zeta^{\sf{T}} & |\zeta| \end{array}\right]
	= \left[\begin{array}{rr} \omega & 0 \\[0.5em] 0 & \omega \end{array}\right]
	  \left[\begin{array}{rr} 1 & - \frac{i \zeta}{\omega} \\[0.5em] \frac{i \zeta^{\sf{T}}}{\omega} & \frac{|\zeta|}{\omega} \end{array}\right].
\end{equation*}
\narrowarray
Now, we have
\widearray
\begin{equation*}
	{\left[\begin{array}{rr} 1 & - \frac{i \zeta}{\omega} \\[0.5em] \frac{i \zeta^{\sf{T}}}{\omega} & \frac{|\zeta|}{\omega} \end{array}\right]}^{-1}
	= {\left\{ \left( 1 - \frac{|\zeta|}{\omega} \right) \frac{|\zeta|}{\omega} \right\}}^{-1}
	  \left[\begin{array}{rr} \left( 1 - \frac{|\zeta|}{\omega} \right) \frac{|\zeta|}{\omega} - \frac{i \zeta \otimes i \zeta}{\omega^2} & \frac{i \zeta}{\omega} \\[0.5em] - \frac{i \zeta^{\sf{T}}}{\omega} & 1 \end{array}\right]
\end{equation*}
\narrowarray
and, hence,
\begin{equation*}
	\hat{z}_w
	= {\left\{ \left( 1 - \frac{|\zeta|}{\omega} \right) \frac{|\zeta|}{\omega} \right\}}^{-1} \omega^{-1} \hat{h}_w
	= {\left( 1 - \frac{|\zeta|}{\omega} \right)}^{-1} {|\zeta|}^{-1} \hat{h}_w.
\end{equation*}
This implies
\begin{equation*}
	\begin{array}{rcl}
		\widehat{\partial_\nu p} & = & - [\partial_y \hat{p}] = \frac{\kappa}{\sqrt{\mu}} \lambda_\epsilon |\zeta| \hat{z}_w = \rho \lambda_\epsilon {\left( 1 - \frac{|\zeta|}{\omega} \right)}^{-1} \hat{h}_w \\[1em]
		                         & = & \rho \lambda_\epsilon \ \frac{1 + \frac{|\zeta|}{\omega}}{1 - \frac{|\zeta|^2}{\omega^2}} \ \hat{h}_w = \omega(\omega + |\zeta|) \hat{h}_w
	\end{array}
\end{equation*}
and, hence, the desired solution may be obtained by solving the splitting scheme
\begin{equation*}
	\begin{array}{c}
		\rho \epsilon u + \rho \partial_t u - \mu \Delta u = - \nabla p, \quad - \Delta p = 0 \quad \mbox{in} \ (0,\,\infty) \times \bR^n_+ \\[0.5em]
		P_\Gamma \cB^0(u) = 0, \quad [\mbox{div}\,u] = 0, \quad \partial_\nu p = T^0 h_w \quad \mbox{on} \ (0,\,\infty) \times \partial \bR^n_+ \\[0.5em]
		u(0) = 0 \quad \mbox{in} \ \bR^n_+,
	\end{array}
\end{equation*}
where the bounded linear operator
\begin{equation*}
	T^0: {}_0 H^1_p((0,\,a),\,\dot{W}^{-1/p}_p(\bR^{n - 1})) \cap \bN^0_{h, -\infty}(\infty) \longrightarrow L_p((0,\,a),\,\dot{W}^{-1/p}_p(\bR^{n - 1}))
\end{equation*}
is defined via its Laplace-Fourier symbol
\begin{equation*}
	m_0(\lambda,\,\xi) = \omega(\omega + |\zeta|),
\end{equation*}
i.\,e. $T^0 = {(\rho \epsilon + \rho \partial_t - \mu \Delta_\Gamma)}^{1/2} {(\rho \epsilon + \rho \partial_t - 2 \mu \Delta_\Gamma)}^{1/2}$,
e.\,g. by the $\cH^\infty$-calculi of the involved operators.
Indeed, the calculations in Appendix~B imply the solution of the above splitting scheme to solve the shifted Stokes equations, cf.\ relation \eqnref{trace:0-0}.
On the other hand, any solution to the shifted Stokes equations with $h_w = 0$ satisfies the equations of the above splitting scheme with a boundary condition
\begin{equation*}
	\partial_\nu p = h_p \quad \mbox{on} \ (0,\,\infty) \times \partial \bR^n_+
\end{equation*}
for some $h_p \in L_p((0,\,a),\,\dot{W}^{-1/p}_p(\bR^{n - 1}))$.
Employing relation \eqnref{trace:0-0} once again, we infer $h_p = T^0 [w] = T^0 h_w = 0$.
Therefore, $p \equiv const.$ and $u = 0$.

\subsection*{The case $\alpha = \pm 1$ and $\beta = 0$}
In this case the boundary conditions read
\begin{equation*}
	\mp \mu [\partial_y v] - \mu \nabla _x [w] = 0 \quad \mbox{and} \quad [w] = h_w \quad \mbox{on} \ (0,\,\infty) \times \partial \bR^n_+
\end{equation*}
and we may assume $h_w \in {}_0 \dot{H}^1_p((0,\,\infty),\,\dot{W}^{-1/p}_p(\bR^{n - 1})) \cap \bN^0_{h, -\infty}(\infty)$ by \Thmref{splitting}.
The boundary conditions lead to
\widearray
\begin{equation*}
	\begin{array}{rcl}
		\cB^{\pm 1, 0}(\lambda,\,\xi)
		& = & \left[\begin{array}{rr} \pm \sqrt{\mu} \omega^2 - \sqrt{\mu} (i \zeta \otimes i \zeta) & - \sqrt{\mu} (i \zeta |\zeta| \pm i \zeta |\zeta|) \\[0.5em] i \zeta^{\sf{T}} & |\zeta| \end{array}\right] \\[2em]
		& = & \left[\begin{array}{rr} \pm \sqrt{\mu} \omega^2 & 0 \\[0.5em] 0 & \omega \end{array}\right] \left[\begin{array}{rr} 1 \mp \frac{i \zeta \otimes i \zeta}{\omega^2} & - (\frac{i \zeta}{\omega} \frac{|\zeta|}{\omega} \pm \frac{i \zeta}{\omega} \frac{|\zeta|}{\omega}) \\[0.5em] \frac{i \zeta^{\sf{T}}}{\omega} & \frac{|\zeta|}{\omega} \end{array}\right].
	\end{array}
\end{equation*}
\narrowarray
Now, we have
\widearray
\begin{equation*}
	\begin{array}{l}
		{\left[\begin{array}{rr} 1 \mp \frac{i \zeta \otimes i \zeta}{\omega^2} & - (\frac{i \zeta}{\omega} \frac{|\zeta|}{\omega} \pm \frac{i \zeta}{\omega} \frac{|\zeta|}{\omega}) \\[0.5em] \frac{i \zeta^{\sf{T}}}{\omega} & \frac{|\zeta|}{\omega} \end{array}\right]}^{-1} \\[2em]
		\quad = {\left\{ \left( 1 - \frac{|\zeta|^2}{\omega^2} \right) \frac{|\zeta|}{\omega} \right\}}^{-1}
		        \left[\begin{array}{rr} \left( 1 - \frac{|\zeta|^2}{\omega^2} \right) \frac{|\zeta|}{\omega} - \frac{i \zeta \otimes i \zeta}{\omega^2} \frac{|\zeta|}{\omega} & \frac{i \zeta}{\omega} \frac{|\zeta|}{\omega} \pm \frac{i \zeta}{\omega} \frac{|\zeta|}{\omega} \\[0.5em] - \frac{i \zeta^{\sf{T}}}{\omega} & 1 \pm \frac{|\zeta|^2}{\omega^2} \end{array}\right]
	\end{array}
\end{equation*}
\narrowarray
and, hence,
\begin{equation*}
	\begin{array}{rcl}
		\hat{z}_w
		& = & {\left\{ \left( 1 - \frac{|\zeta|^2}{\omega^2} \right) \frac{|\zeta|}{\omega} \right\}}^{-1} \left( 1 \pm \frac{|\zeta|^2}{\omega^2} \right) \omega^{-1} \hat{h}_w \\[1.5em]
		& = & {\left( 1 - \frac{|\zeta|^2}{\omega^2} \right)}^{-1} \left( 1 \pm \frac{|\zeta|^2}{\omega^2} \right) {|\zeta|}^{-1} \hat{h}_w.
	\end{array}
\end{equation*}
This implies
\begin{equation*}
	\begin{array}{rcl}
		\widehat{\partial_\nu p} & = & - [\partial_y \hat{p}] = \frac{\kappa}{\sqrt{\mu}} \lambda_\epsilon |\zeta| \hat{z}_w = \rho \lambda_\epsilon {\left( 1 - \frac{|\zeta|^2}{\omega^2} \right)}^{-1} \left( 1 \pm \frac{|\zeta|^2}{\omega^2} \right) \hat{h}_w \\[1em]
		                         & = & \rho \lambda_\epsilon \ \frac{1 \pm \frac{|\zeta|^2}{\omega^2}}{1 - \frac{|\zeta|^2}{\omega^2}} \ \hat{h}_w = (\omega^2 \pm |\zeta|^2) \hat{h}_w
	\end{array}
\end{equation*}
and, hence, the desired solution may be obtained by solving the splitting scheme
\begin{equation*}
	\begin{array}{c}
		\rho \epsilon u + \rho \partial_t u - \mu \Delta u = - \nabla p, \quad - \Delta p = 0 \quad \mbox{in} \ (0,\,\infty) \times \bR^n_+ \\[0.5em]
		P_\Gamma \cB^{\pm 1}(u) = 0, \quad [\mbox{div}\,u] = 0, \quad \partial_\nu p = T^{\pm 1} h_w \quad \mbox{on} \ (0,\,\infty) \times \partial \bR^n_+ \\[0.5em]
		u(0) = 0 \quad \mbox{in} \ \bR^n_+,
	\end{array}
\end{equation*}
where the bounded linear operators
\begin{equation*}
	T^{\pm 1}: {}_0 H^1_p((0,\,a),\,\dot{W}^{-1/p}_p(\bR^{n - 1})) \cap \bN^0_h(\infty) \longrightarrow L_p((0,\,a),\,\dot{W}^{-1/p}_p(\bR^{n - 1}))
\end{equation*}
are defined via their Laplace-Fourier symbols
\begin{equation*}
	m_{\pm 1}(\lambda,\,\xi) = \omega^2 \pm |\zeta|^2,
\end{equation*}
i.\,e. $T^{+1} = (\rho \epsilon + \rho \partial_t - 2 \mu \Delta_\Gamma)$ and $T^{-1} = (\rho \epsilon + \rho \partial_t)$,
e.\,g. by the $\cH^\infty$-calculi of the involved operators.
Indeed, the calculations in Appendix~B imply the solution of the above splitting scheme to solve the shifted Stokes equations, cf.\ relation \eqnref{trace:1-0}.
On the other hand, any solution to the shifted Stokes equations with $h_w = 0$ satisfies the equations of the above splitting scheme with a boundary condition
\begin{equation*}
	\partial_\nu p = h_p \quad \mbox{on} \ (0,\,\infty) \times \partial \bR^n_+
\end{equation*}
for some $h_p \in L_p((0,\,a),\,\dot{W}^{-1/p}_p(\bR^{n - 1}))$.
Employing relation \eqnref{trace:1-0} once again, we infer $h_p = T^{\pm 1} [w] = T^{\pm 1} h_w = 0$.
Therefore, $p \equiv const.$ and $u = 0$.

\subsection*{The Case $\alpha = 0$ and $\beta = +1$}
In this case the boundary conditions read
\begin{equation*}
	[v] = 0 \quad \mbox{and} \quad - 2 \mu [\partial_y w] + [p] = h_w \quad \mbox{on} \ (0,\,\infty) \times \partial \bR^n_+,
\end{equation*}
and we may assume $\gamma = 1/2 - 1/2p$ by \Thmref{splitting}.
The boundary conditions read
\widearray
\begin{equation*}
	\begin{array}{rcl}
		\hat{\cB}^{0, +1}(\lambda,\,\zeta)
		& = & \left[\begin{array}{rr} \omega & - i \zeta \\[1em] 2 \sqrt{\mu} \omega i \zeta^{\sf{T}} & \kappa \lambda_\epsilon + 2 \sqrt{\mu} |\zeta|^2 \end{array}\right] \\[2em]
		& = & \left[\begin{array}{rr} \omega & 0 \\[0.5em] 0 & 2 \sqrt{\mu} \omega^2 \end{array}\right]
	        \left[\begin{array}{rr} 1 & - \frac{i \zeta}{\omega} \\[0.5em] \frac{i \zeta^{\sf{T}}}{\omega} & \frac{1}{2} + \frac{1}{2} \frac{|\zeta|^2}{\omega^2} \end{array}\right].
	\end{array}
\end{equation*}
\narrowarray
Now, we have
\widearray
\begin{equation*}
	{\left[\begin{array}{rr} 1 & - \frac{i \zeta}{\omega} \\[0.5em] \frac{i \zeta^{\sf{T}}}{\omega} & \frac{1}{2} + \frac{1}{2} \frac{|\zeta|^2}{\omega^2} \end{array}\right]}^{-1}
	= {\left\{ {\textstyle{\frac{1}{2} \left( 1 - \frac{|\zeta|^2}{\omega^2} \right)}} \right\}}^{-1}
	  \left[\begin{array}{rr} \frac{1}{2} \left( 1 - \frac{|\zeta|^2}{\omega^2} \right) - \frac{i \zeta \otimes i \zeta}{\omega^2} & \frac{i \zeta}{\omega} \\[0.5em] - \frac{i \zeta^{\sf{T}}}{\omega} & 1 \end{array}\right]
\end{equation*}
\narrowarray
and, hence,
\begin{equation*}
	\hat{z}_w
	= {\left\{ \frac{1}{2} \left( 1 - \frac{|\zeta|^2}{\omega^2} \right) \right\}}^{-1} \frac{1}{2 \sqrt{\mu}} \omega^{-2} \hat{h}_w
	= {\left( 1 - \frac{|\zeta|^2}{\omega^2} \right)}^{-1} \frac{1}{\sqrt{\mu}} \omega^{-2} \hat{h}_w.
\end{equation*}
This implies
\begin{equation*}
	\begin{array}{rcl}
		[\hat{p}] & = & \kappa \lambda_\epsilon \hat{z}_w = \rho \lambda_\epsilon {\left( 1 - \frac{|\zeta|^2}{\omega^2} \right)}^{-1} \omega^{-2} \hat{h}_w \\[1em]
		          & = & \rho \lambda_\epsilon \ \frac{1}{1 - \frac{|\zeta|^2}{\omega^2}} \ \omega^{-2} \hat{h}_w = \hat{h}_w
	\end{array}
\end{equation*}
and, hence, the desired solution may be obtained by solving the splitting scheme
\begin{equation*}
	\begin{array}{c}
		\rho \epsilon u + \rho \partial_t u - \mu \Delta u = - \nabla p, \quad - \Delta p = 0 \quad \mbox{in} \ (0,\,\infty) \times \bR^n_+ \\[0.5em]
		P_\Gamma \cB^0(u) = 0, \quad [\mbox{div}\,u] = 0, \quad [p] = S^0 h_w \quad \mbox{on} \ (0,\,\infty) \times \partial \bR^n_+ \\[0.5em]
		u(0) = 0 \quad \mbox{in} \ \bR^n_+,
	\end{array}
\end{equation*}
where the bounded linear operator
\begin{equation*}
	S^0: \bN^1_{h, 1/2 - 1/2p}(\infty) \longrightarrow \bN^1_{h, 1/2 - 1/2p}(\infty)
\end{equation*}
is simply the identity.
Obviously, a solution of the above splitting scheme solves the shifted Stokes equations.
On the other hand, any solution to the shifted Stokes equations with $h_w = 0$ satisfies the equations of the above splitting scheme with a boundary condition
\begin{equation*}
	[p] = 0 \quad \mbox{on} \ (0,\,\infty) \times \partial \bR^n_+
\end{equation*}
and we infer $p = 0$ as well as $u = 0$.

\subsection*{The case $\alpha = \pm 1$ and $\beta = +1$}
In this case the boundary conditions read
\begin{equation*}
	\mp \mu [\partial_y v] - \mu \nabla _x [w] = 0 \quad \mbox{and} \quad - 2 \mu [\partial_y w] + [p] = h_w \quad \mbox{on} \ (0,\,\infty) \times \partial \bR^n_+
\end{equation*}
and we may assume $\gamma = 1/2 - 1/2p$ by \Thmref{splitting}.
The boundary conditions read
\widearray
\begin{equation*}
	\begin{array}{rcl}
		\hat{\cB}^{\pm 1, +1}(\lambda,\,\zeta) \hspace*{-1em}
		& = \hspace*{-1em} & \left[\begin{array}{rr} \pm \sqrt{\mu} \omega^2 - \sqrt{\mu} (i \zeta \otimes i \zeta) & - \sqrt{\mu} (i \zeta |\zeta| \pm i \zeta |\zeta|) \\[1em] 2 \sqrt{\mu} \omega i \zeta^{\sf{T}} & \kappa \lambda_\epsilon + 2 \sqrt{\mu} |\zeta|^2 \end{array}\right] \\[2em]
		& = \hspace*{-1em} & \left[\begin{array}{rr} {\scriptstyle{\pm \sqrt{\mu} \omega^2}} & 0 \\[0.5em] 0 & {\scriptstyle{2 \sqrt{\mu} \omega^2}} \end{array}\right] \left[\begin{array}{rr} 1 \mp \frac{i \zeta \otimes i \zeta}{\omega^2} & - (\frac{i \zeta}{\omega} \frac{|\zeta|}{\omega} \pm \frac{i \zeta}{\omega} \frac{|\zeta|}{\omega}) \\[0.5em] \frac{i \zeta^{\sf{T}}}{\omega} & \frac{1}{2} + \frac{1}{2} \frac{|\zeta|^2}{\omega^2} \end{array}\right].
	\end{array}
\end{equation*}
\narrowarray
Now, we have
\widearray
\begin{equation*}
	\begin{array}{l}
		{\left[\begin{array}{rr} 1 \mp \frac{i \zeta \otimes i \zeta}{\omega^2} & - (\frac{i \zeta}{\omega} \frac{|\zeta|}{\omega} \pm \frac{i \zeta}{\omega} \frac{|\zeta|}{\omega}) \\[0.5em] \frac{i \zeta^{\sf{T}}}{\omega} & \frac{1}{2} + \frac{1}{2} \frac{|\zeta|^2}{\omega^2} \end{array}\right]}^{-1} \\[2em]
		\qquad = \delta^{-1} \left[\begin{array}{rr} \delta \pm \left\{ \frac{1}{2} \left( 1 + \frac{|\zeta|^2}{\omega^2} \right) \mp \left( \frac{|\zeta|}{\omega} \pm \frac{|\zeta|}{\omega} \right) \right\} \frac{i \zeta \otimes i \zeta}{\omega^2} & \frac{i \zeta}{\omega} \frac{|\zeta|}{\omega} \pm \frac{i \zeta}{\omega} \frac{|\zeta|}{\omega} \\[0.5em] - \frac{i \zeta^{\sf{T}}}{\omega} & 1 \pm \frac{|\zeta|^2}{\omega^2} \end{array}\right]
	\end{array}
\end{equation*}
\narrowarray
with
\begin{equation*}
	\delta = \frac{1}{2} \left( 1 + \frac{|\zeta|^2}{\omega^2} \right) \left( 1 \pm \frac{|\zeta|^2}{\omega^2} \right) - \left( \frac{{|\zeta|}^3}{\omega^3} \pm \frac{{|\zeta|}^3}{\omega^3} \right)
\end{equation*}
and, hence,
\begin{equation*}
	\begin{array}{rcl}
		\hat{z}_w
		& = & {\left\{ \frac{1}{2} \left( 1 + \frac{|\zeta|^2}{\omega^2} \right) \left( 1 \pm \frac{|\zeta|^2}{\omega^2} \right) - \left( \frac{{|\zeta|}^3}{\omega^3} \pm \frac{{|\zeta|}^3}{\omega^3} \right) \right\}}^{-1} \left( 1 \pm \frac{|\zeta|^2}{\omega^2} \right) {\displaystyle{\frac{1}{2 \sqrt{\mu}}}} \omega^{-2} \hat{h}_w \\[1.5em]
		& = & \left\{ \begin{array}{rl} \frac{1}{\sqrt{\mu}} \frac{\omega^2 + |\zeta|^2}{{\left( \omega^2 + |\zeta|^2 \right)}^2 - 4 \omega {|\zeta|}^3} \hat{h}_w, & \quad \mbox{if} \ \alpha = +1, \\[1em] \frac{1}{\sqrt{\mu}} \frac{1}{\omega^2 + |\zeta|^2} \hat{h}_w, & \quad \mbox{if} \ \alpha = -1. \end{array} \right.
	\end{array}
\end{equation*}
This implies
\begin{equation*}
	\begin{array}{rcl}
		[p] & = & \kappa \lambda_\epsilon \hat{z}_w = \frac{\kappa}{\sqrt{\mu}} \lambda_\epsilon \frac{\omega^2 + |\zeta|^2}{{\left( \omega^2 + |\zeta|^2 \right)}^2 - 4 \omega {|\zeta|}^3} \hat{h}_w = \rho \lambda_\epsilon \frac{\omega^2 + |\zeta|^2}{{\left( (\omega^2 - |\zeta|^2) + 2 |\zeta|^2 \right)}^2 - 4 \omega {|\zeta|}^3} \hat{h}_w \\[1em]
		    & = & (\omega^2 - |\zeta|^2) \frac{\omega^2 + |\zeta|^2}{{(\omega^2 - |\zeta|^2)}^2 + 4 (\omega^2 - |\zeta|^2) |\zeta|^2 + 4 {|\zeta|}^4 - 4 \omega {|\zeta|}^3} \hat{h}_w\\[1em]
		    & = & \frac{\omega^2 + |\zeta|^2}{(\omega^2 - |\zeta|^2) + 4 \frac{\omega}{\omega + |\zeta|} |\zeta|^2} \hat{h}_w\\[1em]
	\end{array}
\end{equation*}
for $\alpha = +1$ and
\begin{equation*}
	[p] = \kappa \lambda_\epsilon \hat{z}_w = {\textstyle{\frac{\kappa}{\sqrt{\mu}}}} \lambda_\epsilon \frac{1}{\omega^2 + |\zeta|^2} \hat{h}_w = \frac{\omega^2 - |\zeta|^2}{\omega^2 + |\zeta|^2} \hat{h}_w
\end{equation*}
for $\alpha = -1$.
Therefore, the desired solution may be obtained by solving the splitting scheme
\begin{equation*}
	\begin{array}{c}
		\rho \epsilon u + \rho \partial_t u - \mu \Delta u = - \nabla p, \quad - \Delta p = 0 \quad \mbox{in} \ (0,\,\infty) \times \bR^n_+ \\[0.5em]
		P_\Gamma \cB^{\pm 1}(u) = 0, \quad [\mbox{div}\,u] = 0, \quad [p] = S^{\pm 1} h_w \quad \mbox{on} \ (0,\,\infty) \times \partial \bR^n_+ \\[0.5em]
		u(0) = 0 \quad \mbox{in} \ \bR^n_+,
	\end{array}
\end{equation*}
where the bounded linear operators
\begin{equation*}
	S^{\pm 1}: \bN^1_h(\infty) \longrightarrow L_p((0,\,\infty),\,\dot{W}^{1 - 1/p}_p(\bR^{n - 1}))
\end{equation*}
are defined via their Laplace-Fourier symbols
\begin{equation*}
	m_{\pm 1}(\lambda,\,\xi) = \frac{\omega^2 \pm |\zeta|^2}{(\omega^2 \mp |\zeta|^2) + 2 \frac{\omega}{\omega + |\zeta|} (|\zeta|^2 \pm |\zeta|^2)},
\end{equation*}
i.\,e. $S^{+1} = (\rho \epsilon + \rho \partial_t - 2 \mu \Delta_\Gamma){(\rho \epsilon + \rho \partial_t - 4 \mu S \Delta_\Gamma)}^{-1}$ with
\begin{equation*}
	S := {(\rho \epsilon + \rho \partial_t - \mu \Delta_\Gamma)}^{1/2} {\left\{ {(\rho \epsilon + \rho \partial_t - \mu \Delta_\Gamma)}^{1/2} + {(- \mu \Delta_\Gamma)}^{1/2} \right\}}^{-1}
\end{equation*}
and $S^{-1} = (\rho \epsilon + \rho \partial_t) {(\rho \epsilon + \rho \partial_t - 2 \mu \Delta_\Gamma)}^{-1}$,
e.\,g. by the $\cH^\infty$-calculi of the involved operators.
Indeed, the calculations in Appendix~B imply the solution of the above splitting scheme to solve the shifted Stokes equations, cf.\ relation \eqnref{trace:1-1}.
On the other hand, any solution to the shifted Stokes equations with $h_w = 0$ satisfies the equations of the above splitting scheme with a boundary condition
\begin{equation*}
	[p] = h_p \quad \mbox{on} \ (0,\,\infty) \times \partial \bR^n_+
\end{equation*}
for some $h_p \in L_p((0,\,\infty),\,\dot{W}^{1 - 1/p}_p(\bR^{n - 1}))$.
Employing relation \eqnref{trace:1-1} once again, we infer $h_p = S^{\pm 1} (- 2 \mu [\partial_y w] + [p]) = S^{\pm 1} h_w = 0$.
Therefore, $p = 0$ and $u = 0$.

\section{The Bent Halfspace Case}\label{sec:bent-halfspace}
In this section, we prove \Thmref{s} in the bent halfspace case,
i.\,e.\ we assume $a > 0$, $\Omega = \bR^n_\omega$ with a sufficiently flat function $\omega \in BUC^{3-}(\bR^{n - 1})$ and $1 < p < \infty$ with $p \neq \frac{3}{2},\,3$.
Following \Remref{s} it is sufficient to prove existence of a unique maximal regular solution for all data satisfying the stated regularity and compatibility conditions.
However, following \Remref{splitting} we may restrict the proof to the case $\alpha \in \{\,-1,\,0,\,+1\,\}$, $\beta \in \{\,0,\,+1\,\}$, $f = 0$, $g = 0$, $\cP_\Gamma h = 0$ and $u_0 = 0$,
i.\,e. we have to construct a unique maximal regular solution to the Stokes equations
\begin{equation*}
	\label{eqn:s-bent}
	\begin{array}{c}
		\rho\partial_t u - \mu \Delta u + \nabla p = 0,
		\quad \quad \mbox{div}\,u = 0
			\quad \quad \mbox{in}\ (0,\,a) \times \bR^n_\omega, \\[0.5em]
		P_\Gamma \cB^\alpha(u) = 0,
		\quad \quad Q_\Gamma \cB^\beta(u,\,p) = h_w,
			\quad \quad \mbox{on}\ (0,\,a) \times \partial \bR^n_\omega, \\[0.5em]
		u(0) = 0
			\quad \quad \mbox{in}\ \bR^n_\omega,
	\end{array}
\end{equation*}
where $h_v := \cP_\Gamma h = 0$ and $h_w := Q_\Gamma h \in {}_0 H^1_p((0,\,a),\,\dot{W}^{-1/p}_p(\Gamma,\,N\Gamma)) \cap \bN^0_{h, -\infty}(a)$,
if $\beta = 0$ resp. \ $h_w \in \bN^{+1}_{h, 1/2 - 1/2p}(a)$, if $\beta = +1$.

To solve the above bent halfspace problem, we employ the pull-backs
$\bar{u} := u \circ \Theta_\omega$ and $\bar{p} := p \circ \Theta_\omega$, where
\begin{equation*}
	\Theta_\omega(x,\,y) := (x,\,y + \omega(x)), \quad x \in \bR^{n - 1},\ y > 0,
\end{equation*}
to reduce the problem to the halfspace case.
Since $u = \bar{u} \circ \Theta^{-1}_\omega$
we have
\begin{equation*}
	\begin{array}{rcl}
		\partial_t   u & = & (\partial_t   \bar{u}) \circ \Theta^{-1}_\omega \\[0.5em]
		\partial_k   u & = & (\partial_k   \bar{u}) \circ \Theta^{-1}_\omega - (\partial_k \omega)\left\{\,(\partial_y \bar{u}) \circ \Theta^{-1}_\omega\,\right\} \\[0.5em]
		\partial^2_k u & = & (\partial^2_k \bar{u}) \circ \Theta^{-1}_\omega - 2 (\partial_k \omega)\left\{\,(\partial_k \partial_y \bar{u}) \circ \Theta^{-1}_\omega\,\right\} \\[0.5em]
		               &   & + \ {(\partial_k \omega)}^2\left\{(\partial^2_y \bar{u}) \circ \Theta^{-1}_\omega\,\right\} - (\partial^2_k \omega) \left\{\,(\partial_y \bar{u}) \circ \Theta^{-1}_\omega\,\right\} \\[0.5em]
		\partial_y   u & = & (\partial_y   \bar{u}) \circ \Theta^{-1}_\omega \\[0.5em]
		\partial^2_y u & = & (\partial^2_y \bar{u}) \circ \Theta^{-1}_\omega
	\end{array}
\end{equation*}
for $k = 1,\,2,\,\dots,\,n - 1$ and, hence,
\begin{equation*}
	\begin{array}{rcl}
		\mbox{div}\,u & = & (\mbox{div}\,\bar{u}) \circ \Theta^{-1}_\omega - (\,\nabla \omega\,|\,(\partial_y \bar{v}) \circ \Theta^{-1}_\omega\,) \\[0.5em]
		\Delta u      & = & (\Delta \bar{u}) \circ \Theta^{-1}_\omega - 2 (\,\nabla \omega\,|\,(\nabla_x \partial_y \bar{u}) \circ \Theta^{-1}_\omega\,) \\[0.5em]
		              &   & + \ {|\nabla \omega|}^2 \left\{\,(\partial^2_y \bar{u}) \circ \Theta^{-1}_\omega\,\right\} - (\Delta \omega) \left\{\,(\partial_y \bar{u}) \circ \Theta^{-1}_\omega\,\right\}
	\end{array}
\end{equation*}
in $(0,\,a) \times \bR^n_\omega$.
Here we again decomposed the solution as $u = (v,\,w)$ and the pull-back as $\bar{u} = (\bar{v},\,\bar{w})$ into the first $n - 1$ components and the remaining component.
Note, that $\bar{u}$ will be constructed as a solution to the halfspace problem.
Hence $\bar{v}$ constitutes its tangential part and $\bar{w}$ constitutes its normal part.

Using this transformation, we first derive the system of interior partial differential equations
\begin{equation*}
	\begin{array}{rcll}
		\rho \partial_t \bar{v} - \mu \Delta \bar{v} + \nabla_x   \bar{p} & = & \rho F_v(\bar{v},\,\bar{p}) & \quad \mbox{in} \ (0,\,a) \times \bR^n_\omega, \\[0.5em]
		\rho \partial_t \bar{w} - \mu \Delta \bar{w} + \partial_y \bar{p} & = & \rho F_v(\bar{w})           & \quad \mbox{in} \ (0,\,a) \times \bR^n_\omega, \\[0.5em]
		                                              \mbox{div}\,\bar{u} & = & G(\bar{v})                  & \quad \mbox{in} \ (0,\,a) \times \bR^n_\omega,
	\end{array}
\end{equation*}
where
\begin{equation*}
	\begin{array}{rcl}
		\rho F_v(\bar{v},\,\bar{p}) & := & - \ 2 \mu (\,\nabla \omega\,|\,\nabla_x\,) \partial_y \bar{v} + \mu {|\nabla \omega|}^2\,\partial^2_y \bar{v} - \mu (\Delta \omega)\,\partial_y \bar{v} + (\nabla \omega)\,\partial_y \bar{p}, \\[0.5em]
		\rho F_w(\bar{w})           & := & - \ 2 \mu (\,\nabla \omega\,|\,\nabla_x\,) \partial_y \bar{w} + \mu {|\nabla \omega|}^2\,\partial^2_y \bar{w} - \mu (\Delta \omega)\,\partial_y \bar{w}, \\[0.5em]
		G(\bar{v})                  & := & (\,\nabla \omega\,|\,\partial_y \bar{v}\,).
	\end{array}
\end{equation*}
Now, if we assume ${\|\nabla \omega\|}_{L_\infty(\bR^{n - 1})} < \delta$ with $\delta \in (0,\,1)$, we may estimate
\begin{equation*}
	\begin{array}{rcl}
		{\|(\,\nabla \omega\,|\,\nabla_x\,) \partial_y \bar{v}\|}_{\bY_f(a)} & \leq & \delta {\|\bar{v}\|}_{\bX_u(a)}, \\[0.5em]
		          {\|{|\nabla \omega|}^2\,\partial^2_y \bar{v}\|}_{\bY_f(a)} & \leq & \delta {\|\bar{v}\|}_{\bX_u(a)}, \\[0.5em]
		                {\|(\nabla \omega)\,\partial_y \bar{p}\|}_{\bY_f(a)} & \leq & \delta {\|\bar{p}\|}_{\bX^{|\beta|}_{p, \gamma}(a)}
	\end{array}
\end{equation*}
and analogous estimates are valid for the terms appearing in the definition of $F_w$.
Moreover, the embedding chain
\begin{equation*}
	\begin{array}{l}
		{}_0 H^{1/2}_p((0,\,a),\,L_p(\bR^n_+,\,\bR^{n - 1})) \cap L_p((0,\,a),\,H^1_p(\bR^n_+,\,\bR^{n - 1})) \\[0.5em]
		\qquad \hookrightarrow {}_0 BUC((0,\,a),\,L_p(\bR^n_+,\,\bR^{n - 1})) \\[0.5em]
		\qquad \hookrightarrow \bY_f(a)
	\end{array}
\end{equation*}
is valid and the corresponding estimates read
\begin{equation*}
	\begin{array}{rcl}
		{\|\partial_y \bar{v}\|}_{\bY_f(a)}
			& \leq & a^{1/p} {\|\partial_y \bar{v}\|}_{{}_0 BUC((0, a), L_p(\bR^n_+, \bR^{n - 1}))} \\[0.5em]
			& \leq & c a^{1/p} {\|\partial_y \bar{v}\|}_{{}_0 H^{1/2}_p((0, a), L_p(\bR^n_+, \bR^{n - 1})) \cap L_p((0, a), H^1_p(\bR^n_+, \bR^{n - 1}))},
	\end{array}
\end{equation*}
where the constant $c > 0$ is independent of $a \in (0,\,1]$ thanks to the homogeneous initial condition.
Hence,
\begin{equation*}
	{\|(\Delta \omega)\,\partial_y \bar{v}\|}_{\bY_f(a)} \leq c a^{1/p} {\|\Delta \omega\|}_{L_\infty(\bR^{n - 1})} {\|\bar{v}\|}_{\bX_u(a)}
\end{equation*}
and an analogous estimate is valid for the term appearing in the definition of $F_w$.
Summing up the above estimates, we derive
\begin{subequations}
\label{eqn:bh-estimates}
\begin{equation}
	\begin{array}{rcl}
		{\|F_v(\bar{v},\,\bar{p})\|}_{\bY_f(a)} & \leq & \mu (3 \delta + c a^{1/p} {\|\Delta \omega\|}_{L_\infty(\bR^{n - 1})}) {\|\bar{v}\|}_{\bX_u(a)} + \delta {\|\bar{p}\|}_{\bX^\beta_{p, \gamma}(a)}, \\[0.5em]
		          {\|F_w(\bar{w})\|}_{\bY_f(a)} & \leq & \mu (3 \delta + c a^{1/p} {\|\Delta \omega\|}_{L_\infty(\bR^{n - 1})}) {\|\bar{w}\|}_{\bX_u(a)}.
	\end{array}
\end{equation}
Furthermore, we have
\begin{equation*}
	{\|(\,\nabla \omega\,|\,\partial_y \bar{v}\,)\|}_{{}_0 H^{1/2}_p((0, a), L_p(\bR^n_+))} \leq \delta {\|\partial_y \bar{v}\|}_{{}_0 H^{1/2}_p((0, a), L_p(\bR^n_+))}
\end{equation*}
and
\begin{equation*}
	\begin{array}{rcl}
	{\|(\,\nabla \omega\,|\,\partial_y \bar{v}\,)\|}^p_{L_p((0, a), H^1_p(\bR^n_+))}
		& = & {\|(\,\nabla \omega\,|\,\partial_y \bar{v}\,)\|}^p_{L_p((0, a), L_p(\bR^n_+))} \\[0.5em]
		&   & \ + \ {\displaystyle{\sum^{n - 1}_{k = 1}}} {\|(\,\partial_k \nabla \omega\,|\,\partial_y \bar{v}\,)\|}^p_{L_p((0, a), L_p(\bR^n_+))} \\[1.5em]
		&   & \ + \ {\displaystyle{\sum^{n - 1}_{k = 1}}} {\|(\,\nabla \omega\,|\,\partial_k \partial_y \bar{v}\,)\|}^p_{L_p((0, a), L_p(\bR^n_+))} \\[1.5em]
		&   & \ + \ {\|(\,\nabla \omega\,|\,\partial^2_y \bar{v}\,)\|}^p_{L_p((0, a), L_p(\bR^n_+))}
	\end{array}
\end{equation*}
with the same arguments as above implies
\begin{equation*}
	{\|(\,\nabla \omega\,|\,\partial_y \bar{v}\,)\|}_{L_p((0, a), H^1_p(\bR^n_+))} \leq 2^{1/p} (\delta + c a^{1/p} {\|\nabla^2 \omega\|}_{L_\infty(\bR^{n - 1})}) {\|\bar{v}\|}_{\bX_u(a)},
\end{equation*}
where the constant $c > 0$ is again independent of $a \in (0,\,1]$.
Hence, the estimate
\begin{equation}
	{\|G(\bar{v})\|}_{\bY_g(a)} \leq 2^{1/p} (\delta + c a^{1/p} {\|\nabla^2 \omega\|}_{L_\infty(\bR^{n - 1})}) {\|\bar{v}\|}_{\bX_u(a)}
\end{equation}
is valid.

Now, the outer unit normal of $\bR^n_\omega$ is given as
\begin{equation*}
	\nu_\Gamma(x,\,\omega(x)) = \frac{1}{\sqrt{1 + {\|\nabla \omega\|}^2}} \left[\begin{array}{c} \nabla \omega(x) \\[0.5em] -1 \end{array}\right], \quad x \in \bR^{n - 1}
\end{equation*}
and we infer
\begin{equation*}
	 \kappa_\omega \left\{\,\nu_\Gamma \circ \Theta_\omega\,\right\} = \nu_\Sigma + \left[\begin{array}{c} \nabla \omega \\[0.5em] 0 \end{array}\right] =: \nu_\Sigma + N(\nabla \omega),
\end{equation*}
where we have set $\kappa_\omega := \sqrt{1 + {\|\nabla \omega\|}^2}$ and $\nu_\Sigma$ denotes the outer unit normal of $\bR^n_+$ on $\Sigma := \partial \bR^n_+$.
Moreover,
\begin{equation*}
	P_\Gamma(x,\,\omega(x)) = 1 - \frac{1}{1 + {\|\nabla \omega\|}^2} \left[\begin{array}{cc} \nabla \omega(x) \otimes \nabla \omega(x) & - \nabla \omega(x) \\[0.5em] -\nabla \omega(x)^{\sf{T}} & 1 \end{array}\right], \quad x \in \bR^{n - 1}
\end{equation*}
and we infer
\begin{equation*}
	P_\Gamma \circ \Theta_\omega = P_\Sigma - \frac{1}{1 + {|\nabla \omega|}^2} \left[\begin{array}{cc} \nabla \omega \otimes \nabla \omega & - \nabla \omega \\[0.5em] - \nabla \omega & - {|\nabla \omega|}^2 \end{array}\right] =: P_\Sigma - L(\nabla \omega)
\end{equation*}

If $\alpha = 0$, the tangential boundary condition $P_\Gamma [u] = 0$ on $(0,\,a) \times \partial \bR^n_\omega$ is equivalent to
\begin{equation*}
	P_\Sigma [\bar{u}] - L(\nabla \omega) [\bar{u}] = P_\Gamma [u] \circ \Theta_\omega = 0
\end{equation*}
on $(0,\,a) \times \partial \bR^n_+$, which may be rewritten as
\begin{equation*}
	[\bar{v}] = L_v(\nabla \omega) [\bar{u}], \quad \quad L_w(\nabla \omega) [\bar{u}] = 0
\end{equation*}
on $(0,\,a) \times \partial \bR^n_+$, where $L_v$ denotes the first $n - 1$ rows of $L$ and $L_w$ denotes the last row of $L$.
However, the first equation already implies the second one, which is not surprising, since a tangential boundary condition on $\Gamma = \partial \Omega$
may not lead to $n$ linearly independent transformed boundary conditions on $\Sigma = \partial \bR^n_+$.
The original boundary condition in the bent halfspace is therefore equivalent to
\begin{equation*}
		[\bar{v}] = H^0_v(\bar{v},\,\bar{w}) \quad \mbox{on} \ (0,\,a) \times \partial \bR^n_\omega
\end{equation*}
where
\begin{equation*}
	H^0_v(\bar{v},\,\bar{w}) := \frac{\nabla \omega}{1 + {\|\nabla \omega\|}^2} \left\{(\,\nabla \omega\,|\,[\bar{v}]\,) - [\bar{w}]\right\}.
\end{equation*}
Observe, that all terms above either carry a factor $\nabla \omega$ or are of lower order.
With the same arguments as above we therefore infer
\begin{equation}
	{\|H^0_v(\bar{v},\,\bar{w})\|}_{\bT^0_h(a)} \leq c (\delta + a^{1/p} {\|\nabla^2 \omega\|}_{BUC^{1-}(\bR^{n - 1})}) {\|\bar{u}\|}_{\bX_u(a)}
\end{equation}
with some constant $c > 0$ independent of $a \in (0,\,1]$.

If $\alpha = \pm 1$, we have to treat the terms
\begin{equation*}
	\begin{array}{rll}
		[\nabla u \pm \nabla u^{\sf{T}}] \circ \Theta_\omega
			& =  & [\nabla \bar{u} \pm \nabla \bar{u}^{\sf{T}}] - \left[\begin{array}{cc} \nabla \omega \otimes [\partial_y \bar{v}] \pm [\partial_y \bar{v}] \otimes \nabla \omega & (\nabla \omega)\,[\partial_y \bar{w}] \\[0.5em] \pm (\,\nabla \omega\,|\,[\partial_y \bar{v}]\,) & 0 \end{array}\right] \\[2em]
			& =: & [\nabla \bar{u} \pm \nabla \bar{u}^{\sf{T}}] - M(\nabla \omega,\,[\partial_y \bar{v}],\,[\partial_y \bar{w}])
	\end{array}
\end{equation*}
and the tangential boundary condition $\mu P_\Gamma [\nabla u \pm \nabla u^{\sf{T}}] \nu_\Gamma = 0$ on $(0,\,a) \times \partial \bR^n_\omega$ is equivalent to
\begin{equation*}
	\begin{array}{rcl}
		\mu P_\Sigma [\nabla \bar{u} \pm \nabla \bar{u}^{\sf{T}}] \nu_\Sigma
			& - & \mu L(\nabla \omega) \left\{\,[\nabla u \pm \nabla u^{\sf{T}}] \circ \Theta_\omega\,\right\} \nu_\Sigma \\[0.5em]
			& - & \mu P_\Sigma M(\nabla \omega,\,[\partial_y \bar{v}],\,[\partial_y \bar{w}]) \nu_\Sigma \\[0.5em]
			& + & \mu \left\{\,P_\Gamma [\nabla u \pm \nabla u^{\sf{T}}] \circ \Theta_\omega \right\} N(\nabla \omega) \\[0.5em]
			& = & \kappa_\omega \mu \left\{\,P_\Gamma [\nabla u \pm \nabla u^{\sf{T}}] \nu_\Gamma \circ \Theta_\omega \right\} \\[0.5em]
			& = & 0,
	\end{array}
\end{equation*}
on $(0,\,a) \times \partial \bR^n_+$, which may be rewritten as
\begin{equation*}
	\begin{array}{rcl}
		\mp \mu [\partial_y v] - \mu \nabla _x [w]
			& = & \mu L_v(\nabla \omega) \left\{\,[\nabla u \pm \nabla u^{\sf{T}}] \circ \Theta_\omega\,\right\} \nu_\Sigma \\[0.5em]
			& + & \mu M_v(\nabla \omega,\,[\partial_y \bar{v}],\,[\partial_y \bar{w}]) \nu_\Sigma \\[0.5em]
			& - & \mu \left\{\,P^v_\Gamma [\nabla u \pm \nabla u^{\sf{T}}] \circ \Theta_\omega \right\} N(\nabla \omega)
	\end{array}
\end{equation*}
and
\begin{equation*}
	\mu L_w(\nabla \omega) \left\{\,[\nabla u \pm \nabla u^{\sf{T}}] \circ \Theta_\omega\,\right\} \nu_\Sigma = \mu \left\{\,P^w_\Gamma [\nabla u \pm \nabla u^{\sf{T}}] \circ \Theta_\omega \right\} N(\nabla \omega)
\end{equation*}
on $(0,\,a) \times \partial \bR^n_+$, where $M_v$ denotes the first $n - 1$ rows of $P_\Sigma M$, $P^v_\Gamma$ denotes the first $n - 1$ rows of $P_\Gamma$ and $P^w_\Gamma$ denotes the last row of $P_\Gamma$.
Again, the first equation already implies the second one and the original boundary condition in the bent halfspace is therefore equivalent to
\begin{equation*}
	\mp \mu [\partial_y v] - \mu \nabla _x [w] = H^{\pm 1}_v(\bar{v},\,\bar{w}) \quad \mbox{on} \ (0,\,a) \times \partial \bR^n_\omega
\end{equation*}
where
\begin{equation*}
	\begin{array}{rcl}
		H^{\pm 1}_v(\bar{v},\,\bar{w})
			& := & \mu L_v(\nabla \omega) \left\{\,[\nabla u \pm \nabla u^{\sf{T}}] \circ \Theta_\omega\,\right\} \nu_\Sigma + \mu M_v(\nabla \omega,\,[\partial_y \bar{v}],\,[\partial_y \bar{w}]) \nu_\Sigma \\[0.5em]
			&    & \qquad - \ \mu \left\{\,P^v_\Gamma [\nabla u \pm \nabla u^{\sf{T}}] \circ \Theta_\omega \right\} N(\nabla \omega).
	\end{array}
\end{equation*}
Observe, that all terms above again either carry a factor $\nabla \omega$ or are of lower order.
With the same arguments as above we therefore infer
\begin{equation}
	{\|H^{\pm 1}_v(\bar{v},\,\bar{w})\|}_{\bT^1_h(a)} \leq c (\delta + a^{1/p} {\|\nabla^2 \omega\|}_{BUC^{1-}(\bR^{n - 1})}) {\|\bar{u}\|}_{\bX_u(a)}
\end{equation}
with some constant $c > 0$ independent of $a \in (0,\,1]$.

Concerning the normal boundary condition, we start with the case $\beta = 0$, i.\,e. $[u] \cdot \nu_\Gamma = h \cdot \nu_\Gamma$ on $(0,\,a) \times \partial \bR^n_\omega$.
This condition is equivalent to
\begin{equation*}
	-[\bar{w}] + \nabla \omega \cdot [\bar{v}] = [\bar{u}] \cdot \nu_\Sigma + [\bar{u}] \cdot N(\nabla \omega) = \kappa_\omega \left\{\,([u] \cdot \nu_\Gamma) \circ \Theta_\omega\,\right\} = \kappa_\omega \bar{h}_w
\end{equation*}
on $(0,\,a) \times \partial \bR^n_+$ with $\bar{h}_w := (h \cdot \nu_\Gamma) \circ \Theta_\omega$.
Therefore, the original boundary condition in the bent halfspace is equivalent to
\begin{equation*}
	[\bar{w}] = \nabla \omega \cdot [\bar{v}] - \kappa_\omega \bar{h}_w =: H^0_w(\bar{v}) - \kappa_\omega \bar{h}_w
\end{equation*}
on $(0,\,a) \times \partial \bR^n_+$.
As above, all appearing terms either carry a factor $\nabla \omega$ or are of lower order and we infer
\begin{equation}
	{\|H^0_w(\bar{v})\|}_{\bN^0_h(a)} \leq c (\delta + a^{1/p} {\|\nabla^2 \omega\|}_{BUC^{1-}(\bR^{n - 1})}) {\|\bar{u}\|}_{\bX_u(a)}
\end{equation}
with some constant $c > 0$ independent of $a \in (0,\,1]$.

Finally, if $\beta = +1$, the normal boundary condition $2 \mu\,\partial_\nu u \cdot \nu_\Gamma - [p] = h \cdot \nu_\Gamma$ on $(0,\,a) \times \partial \bR^n_\omega$ is equivalent to
\begin{equation*}
	\begin{array}{rcl}
		2 \mu\,\partial_\nu \bar{u} \cdot \nu_\Sigma - [\bar{p}]
			& - & 2 \mu\,Q(\nabla \omega,\,[\partial_y \bar{v}],\,[\partial_y \bar{w}]) \nu_\Sigma \cdot \nu_\Sigma \\[0.5em]
			& + & 2 \mu\,\left\{[\nabla u] \circ \Theta_\omega\right\} N(\nabla \omega) \cdot \nu_\Sigma \\[0.5em]
			& + & 2 \mu\,\left\{[\nabla u] \nu_\Gamma \circ \Theta_\omega\right\} \cdot \kappa_\omega N(\nabla \omega) \\[0.5em]
			& - & {|\nabla \omega|}^2 [\bar{p}] \\[0.5em]
			& = & \kappa^2_\omega \left\{(2 \mu\,\partial_\nu u \cdot \nu_\Gamma) \circ \Theta_\omega - [p] \circ \Theta_\omega \right\} \\[0.5em]
			& = & \kappa^2_\omega \bar{h}_w
	\end{array}
\end{equation*}
on $(0,\,a) \times \partial \bR^n_+$ with $\bar{h}_w := h \cdot \nu_\Gamma \circ \Theta_\omega$ and
\begin{equation*}
	[\nabla u] \circ \Theta_\omega
		=  [\nabla \bar{u}] - \left[\begin{array}{cc} \nabla \omega \otimes [\partial_y \bar{v}] & (\nabla \omega)\,[\partial_y \bar{w}] \\[0.5em] 0 & 0 \end{array}\right]
		=: [\nabla \bar{u}] - Q(\nabla \omega,\,[\partial_y \bar{v}],\,[\partial_y \bar{w}]).
\end{equation*}
Therefore, the original boundary condition in the bent halfspace is equivalent to
\begin{equation*}
	\begin{array}{rcl}
		2 \mu\,\partial_\nu \bar{u} \cdot \nu_\Sigma - [\bar{p}]
			& =  & 2 \mu\,Q(\nabla \omega,\,[\partial_y \bar{v}],\,[\partial_y \bar{w}]) \nu_\Sigma \cdot \nu_\Sigma \\[0.5em]
			& -  & 2 \mu\,\left\{[\nabla u] \circ \Theta_\omega\right\} N(\nabla \omega) \cdot \nu_\Sigma \\[0.5em]
			& -  & 2 \mu\,\left\{[\nabla u] \nu_\Gamma \circ \Theta_\omega\right\} \cdot \kappa_\omega N(\nabla \omega) \\[0.5em]
			& +  & {|\nabla \omega|}^2 [\bar{p}] + \kappa^2_\omega \bar{h}_w \\[0.5em]
			& =: & H^{+1}_w(\bar{v},\,\bar{w},\,\bar{p}) + \kappa^2_\omega \bar{h}_w
	\end{array}
\end{equation*}
on $(0,\,a) \times \partial \bR^n_+$.
As above, all appearing terms either carry a factor $\nabla \omega$ or are of lower order and we infer
\begin{equation}
	{\|H^{+1}_w(\bar{v},\,\bar{w},\,\bar{p})\|}_{\bN^1_h(a)} \leq c (\delta + a^{1/p} {\|\nabla^2 \omega\|}_{BUC^{1-}(\bR^{n - 1})}) ({\|\bar{u}\|}_{\bX_u(a)} + {\|\bar{p}\|}_{\bX^\beta_{p, \gamma}(a)})
\end{equation}
with some constant $c > 0$ independent of $a \in (0,\,1]$.
\end{subequations}

Summarizing the above considerations, the desired maximal regularity property of the bent halfspace problem is equivalent to the existence of a unique solution
$(\bar{v},\,\bar{w}) \in {}_0 H^1_p((0,\,a),\,L_p(\bR^n_+,\,\bR^n)) \cap L_p((0,\,a),\,H^2_p(\bR^n_+,\,\bR^n))$ together with an appropriate pressure $\bar{p}$,
whose regularity depends on $\beta$ and $\gamma$, to the fixed point problem
\begin{equation*}
	L_a(\bar{v},\,\bar{w},\,\bar{p}) = R_a(\bar{v},\,\bar{w},\,\bar{p}),
\end{equation*}
where $L_a$ resp. $R_a$ denote the left resp. right hand side of the transformed system, i.\,e. $L_a$ is the isomorphism arising from the halfspace problem.
Now, the estimates \eqnref{bh-estimates} imply, that we may first choose $\delta \in (0,\,1)$ and then, given $\omega \in BUC^{3-}(\bR^{n - 1})$ with ${|\nabla \omega|}_{L_\infty(\bR^{n - 1})} < \delta$,
we may choose $a \in (0,\,1]$, such that the fixed point problem admits exactly one solution $(\bar{v},\,\bar{w},\,\bar{p})$ in the maximal regularity class,
thanks to the contraction mapping principle.
Hence, the original problem in the bent halfspace has the desired maximal regularity property at least on small time intervals.
Observe, that $a$ may depend on $\omega$, but not on the data.
Therefore, we may construct a unique solution on any arbitrary time interval by successively solving the system on sufficiently small time intervals.
Finally, the bent halfspace problem has the desired maximal regularity property for all $a > 0$ and all $\omega \in BUC^{3-}(\bR^{n - 1})$ with ${|\nabla \omega|}_{L_\infty(\bR^{n - 1})} < \delta$.

\section{The Case of a Bounded Domain}\label{sec:domain}
In this section, we prove \Thmref{s} in the bounded domain case,
i.\,e.\ we assume $a > 0$, $\Omega \subseteq \bR^n$ to be a bounded domain with boundary $\Gamma = \partial \Omega$ of class $C^{3-}$ and $1 < p < \infty$, $p \neq \frac{3}{2},\,3$.
Following \Remref{s} it is sufficient to prove existence of a unique maximal regular solution for all data satisfying the stated regularity and compatibility conditions.
However, following \Remref{splitting} we may restrict the proof to the case $\alpha \in \{\,-1,\,0,\,1\,\}$, $\beta \in \{\,0,\,1\,\}$, $f = 0$, $g = 0$, $P_\Gamma h = 0$ and $u_0 = 0$,
i.\,e. we have to construct a unique maximal regular solution to the Stokes equations
\begin{equation}
	\label{eqn:s-dom-0}
	\begin{array}{c}
		\rho\partial_t u - \mu \Delta u + \nabla p = 0,
		\quad \quad \mbox{div}\,u = 0
			\quad \quad \mbox{in}\ (0,\,a) \times \Omega, \\[0.5em]
		P_\Gamma \cB^\alpha(u) = 0,
		\quad \quad Q_\Gamma \cB^\beta(u,\,p) = h_w,
			\quad \quad \mbox{on}\ (0,\,a) \times \partial \Omega, \\[0.5em]
		u(0) = 0
			\quad \quad \mbox{in}\ \Omega,
	\end{array}
\end{equation}
where $h_v := P_\Gamma h = 0$ and $h_w := Q_\Gamma h \in \bN^\beta_{h, \gamma}$
with $h_w \in {}_0 H^1_p((0,\,a),\,\dot{W}^{-1/p}_p(\Gamma,\,N\Gamma))$ and $\gamma = -\infty$, if $\beta = 0$ resp. \ $\gamma = 1/2 - 1/2p$, if $\beta = 1$.

If $\beta = 0$, we first choose an offset
\begin{equation*}
	\nabla \eta \in {}_0 H^1_p((0,\,a),\,L_p(\Omega,\,\bR^n)) \cap L_p((0,\,a),\,H^2_p(\Omega,\,\bR^n))
\end{equation*}
as a solution to the elliptic problem
\begin{equation*}
		- \Delta \eta = 0 \quad \mbox{in} \ (0,\,a) \times \Omega, \qquad 
		\partial_\nu \eta = h_w \cdot \nu \quad \mbox{on} \ (0,\,a) \times \partial \Omega,
\end{equation*}
we set $\bar{h}_v := - P_\Gamma \cB^\alpha(\nabla \eta)$,
and then construct a unique maximal regular solution $(\bar{u},\,\bar{p}) := (u - \nabla \eta,\,p + \rho \partial_t \eta - \mu \Delta \eta)$ to the Stokes equations
\begin{equation}
	\label{eqn:s-dom-1}
	\begin{array}{c}
		\rho\partial_t u - \mu \Delta u + \nabla p = 0,
		\quad \quad \mbox{div}\,u = 0
			\quad \quad \mbox{in}\ (0,\,a) \times \Omega, \\[0.5em]
		P_\Gamma \cB^\alpha(u) = h_v,
		\quad \quad Q_\Gamma \cB^\beta(u,\,p) = 0,
			\quad \quad \mbox{on}\ (0,\,a) \times \partial \Omega, \\[0.5em]
		u(0) = 0
			\quad \quad \mbox{in}\ \Omega,
	\end{array}
\end{equation}
where we dropped the bars again.

Now, if we consider \eqnref{s-dom-1} in case $\beta = 0$ and \eqnref{s-dom-0} in case $\beta = 1$, any maximal regular solution, where we may assume
\begin{equation}
	\label{eqn:pressure-mean}
	{(\,p\,)}_\Omega = \frac{1}{|\Omega|} \int_\Omega p\,\mbox{d}x = 0,
\end{equation}
if $\beta = 0$, enjoys the additional time regularity
\begin{equation*}
	p \in {}_0 H^\sigma_p((0,\,a),\,L_p(\Omega)) \cap L_p((0,\,a),\,H^1_p(\Omega))
\end{equation*}
for all $\sigma \in (0,\,1/2 - 1/2p)$, which can be seen as follows.
Given $\psi \in L_{p^\prime}(\Omega)$, where $1/p + 1/p^\prime = 1$, we choose $\phi \in H^2_{p^\prime}(\Omega)$ to be a solution to
\begin{equation*}
	- \Delta \phi = \psi_0 \quad \mbox{in} \ \Omega, \qquad \partial_\nu \phi = 0 \quad \mbox{on} \ \partial \Omega,
\end{equation*}
if $\beta = 0$, where $\psi_0 := \psi - {(\psi)}_\Omega$, resp.
\begin{equation*}
	- \Delta \phi = \psi \quad \mbox{in} \ \Omega, \qquad [\phi] = 0 \quad \mbox{on} \ \partial \Omega,
\end{equation*}
if $\beta = 1$.
Observe, that \eqnref{pressure-mean} implies $(\,p\,|\,\psi_0\,) = (\,p\,|\,\psi\,)$, if $\beta = 0$.
Using integration by parts and the interior equations we infer
\begin{equation*}
	\begin{array}{rcl}
		(\,p\,|\,\psi\,)
			& = & - (\,p\,|\,\Delta \phi\,) = (\,\nabla p\,|\,\nabla \phi\,) - {\displaystyle{\int_\Gamma [p]\,\partial_\nu \phi\,\mbox{d}\sigma}} \\[1em]
			& = & -\rho \partial_t (\,u\,|\,\nabla \phi\,) + \mu (\,\Delta u\,|\,\nabla \phi\,) - {\displaystyle{\int_\Gamma [p]\,\partial_\nu \phi\,\mbox{d}\sigma}} \\[1em]
			& = & \mu (\,\Delta u\,|\,\nabla \phi\,) - \rho \partial_t {\displaystyle{\int_\Gamma ([u] \cdot \nu)\,[\phi]\,\mbox{d}\sigma}} - {\displaystyle{\int_\Gamma [p]\,\partial_\nu \phi\,\mbox{d}\sigma}}
	\end{array}
\end{equation*}
and the boundary conditions imply
\begin{equation*}
	(\,p\,|\,\psi\,) = \mu (\,\Delta u\,|\,\nabla \phi\,),
\end{equation*}
if $\beta = 0$, resp.
\begin{equation*}
	(\,p\,|\,\psi\,) = \mu (\,\Delta u\,|\,\nabla \phi\,) - {\displaystyle{\int_\Gamma q\,\partial_\nu \phi\,\mbox{d}\sigma}},
\end{equation*}
if $\beta = 1$, with $q := (\,h_w\,|\,\nu\,) - 2 \mu\,\partial_\nu u \cdot \nu$, i.\,e.
\begin{equation*}
	 q \in {}_0 W^{1/2 - 1/2p}_p((0,\,a),\,L_p(\partial \Omega)) \cap L_p((0,\,a),\,W^{1 - 1/p}_p(\partial \Omega)).
\end{equation*}
Employing an integration by parts again, we have
\begin{equation*}
	\mu (\,\Delta u\,|\,\nabla \phi\,) = \mu {\displaystyle{\int_\Gamma \partial_\nu u \cdot [\nabla \phi]\,\mbox{d}\sigma}} - \mu (\,\nabla u\,|\,\nabla^2 \phi\,)
\end{equation*}
and we infer
\begin{equation*}
	(\,p\,|\,\psi\,) = \mu {\displaystyle{\int_\Gamma \partial_\nu u \cdot [\nabla \phi]\,\mbox{d}\sigma}} - {\displaystyle{\int_\Gamma q\,\partial_\nu \phi\,\mbox{d}\sigma}} - \mu (\,\nabla u\,|\,\nabla^2 \phi\,),
\end{equation*}
where we have set $q := 0$, if $\beta = 0$.
Now, we may use the regularity of the functions involved on the right hand side and apply the operator $\partial^\sigma_t$ to obtain the estimate
\begin{equation*}
	\begin{array}{l}
		{\|\partial^\sigma_t p\|}_{L_p((0,a) \times \Omega)} \\[0.5em]
		\qquad \leq c \left({\|\partial^\sigma_t \partial_\nu u\|}_{L_p((0,a) \times \Gamma)}\!+\!{\|\partial^\sigma_t q\|}_{L_p((0,a) \times \Gamma)}\!+\!{\|\partial^\sigma_t \nabla u\|}_{L_p((0,a) \times \Omega)}\right)
	\end{array}
\end{equation*}
and, hence, the desired regularity property of the pressure $p$.
To be precise, we have the estimate
\begin{equation*}
	{\|p\|}_{{}_0 H^\sigma_p((0, a), L_p(\Omega))} \leq c \left( {\|u\|}_{{}_0 \bX_u(a)} + \beta {\|h\|}_{{}_0 \bY^{\alpha, \beta}_{h, \gamma}(a)} \right).
\end{equation*}
Note, that for an inhomogeneous right hand side $f \in {}_0 H^\sigma_p((0, a),\,L_p(\Omega))$ in the momentum equation of \eqnref{s-dom-0} respectively \eqnref{s-dom-1},
an analogous computation yields
\begin{equation*}
	{\|p\|}_{{}_0 H^\sigma_p((0, a), L_p(\Omega))} \leq c \left( {\|u\|}_{{}_0 \bX_u(a)} + {\|f\|}_{{}_0 H^\sigma_p((0, a), L_p(\Omega))} + \beta {\|h\|}_{{}_0 \bY^{\alpha, \beta}_{h, \gamma}(a)} \right).
\end{equation*}
Note, that the embeddings
\begin{equation*}
	{}_0 H^{\sigma/2}_p((0,\,a),\,X), \ {}_0 H^{\sigma}_p((0,\,a),\,X) \hookrightarrow L_q((0,\,a),\,X)
\end{equation*}
are available for some $p < q < \infty$ and every Banach space $X$, where the embedding constant does not depend on $a > 0$ thanks to the homogeneous initial condition.
Hence, we always have
\begin{equation*}
	{}_0 H^{\sigma/2}_p((0,\,a),\,X), \ {}_0 H^{\sigma}_p((0,\,a),\,X) \hookrightarrow L_p((0,\,a),\,X),
\end{equation*}
with embedding constant $c a^\tau$, $c > 0$ being independent of $a > 0$; $\tau = 1/p - 1/q$.

To prove the existence of a unique maximal regular solution to the Stokes equations \eqnref{s-dom-0} resp.\ \eqnref{s-dom-1}
we first choose finitely many points $x_1,\,x_2,\,\dots,\,x_N \in \Gamma$, such that $\Gamma \cap B_r(x_k)$ is the graph of a $BUC^{3-}$-function $\omega_k$ over the tangent plane $T_{x_k} \Gamma$ for $k = 1,\,2,\,\dots,\,N$.
Moreover, we choose $r > 0$ sufficiently small, such that ${\|\nabla \omega_k\|}_{L_\infty(\bR^{n - 1})} < \delta$ for $k = 1,\,2,\,\dots,\,N$ with $\delta \in (0,\,1)$ as in the previous section.
The open sets $U_k := B_r(x_k)$ then constitute a covering of $\Gamma$, which may be completed by an open set $U_0 \subseteq \Omega$ to a covering of $\Omega$.
Finally, we choose a partition of unity $\phi_0,\,\phi_1,\,\dots,\,\phi_N \in C^\infty_0(\bR^n)$ subordinate to the covering $U_0,\,U_1,\,\dots,\,U_N$ of $\Omega$
and cut-off functions $\psi_0,\,\psi_1,\,\dots,\,\psi_N \in C^\infty_0(\bR^n)$ with $\mbox{spt}\,\psi_k \subseteq U_k$ and $\psi_k \equiv 1$ on $\mbox{spt}\,\phi_k$.

Now, every maximal regular solution $(u,\,p)$ of system \eqnref{s-dom-0} resp. \eqnref{s-dom-1} may be decomposed as $u = u_0 + u_1 + \dots + u_N$ and $p = p_0 + p_1 + \dots + p_N$ with $u_k = \phi_k u$ and $p_k = \phi_k p$ for $k = 0,\,1,\,\dots,\,N$.
By construction, $(u_0,\,p_0)$ is a maximal regular solution to the whole space Stokes equations
\begin{equation*}
	\begin{array}{c}
		\rho \partial_t u_0 - \mu \Delta u_0 + \nabla p_0 = \rho F_0(u,\,p), \quad \mbox{div}\,u_0 = G_0(u)
			\quad \quad \mbox{in}\ (0,\,a) \times \bR^n, \\[0.5em]
		u_0(0) = 0
			\quad \quad \mbox{in}\ \bR^n,
	\end{array}
\end{equation*}
and $(u_k,\,p_k)$ is a maximal regular solution to the bent halfspace Stokes equations
\begin{equation*}
	\begin{array}{c}
		\rho \partial_t u_k - \mu \Delta u_k + \nabla p_k = \rho F_k(u,\,p), \quad \mbox{div}\,u_k = G_k(u)
			\quad \quad \mbox{in}\ (0,\,a) \times \bR^n_{\omega_k}, \\[0.5em]
		\begin{array}{c} P_{\Gamma_k} \cB^\alpha(u_k) = H^\alpha_{\tau, k}(u) + h_{\tau, k}, \\[0.5em]
		Q_{\Gamma_k} \cB^\beta(u_k,\,p_k) = H^\beta_{\nu, k}(u) + h_{\nu, k} \end{array}
			\quad \quad \mbox{on}\ (0,\,a) \times \partial \bR^n_{\omega_k}, \\[1.5em]
		u_k(0) = 0
			\quad \quad \mbox{in}\ \bR^n_{\omega_k}
	\end{array}
\end{equation*}
for $k = 1,\,\dots,\,N$ with
\begin{equation*}
	\begin{array}{rcl}
		\rho F_k(u,\,p)        & := & - \mu [\Delta,\,\phi_k] u + [\nabla,\,\phi_k] p
		                          =   - \mu (\Delta \phi_k) u - 2 \mu (\nabla \phi_k \cdot \nabla) u + (\nabla \phi_k) p, \\[0.5em]
		G_k(u)                 & := & [\mbox{div},\,\phi_k] u
		                          =   \nabla \phi_k \cdot u, \\[0.5em]
		H^0_{\tau, k}(u)       & := & [P_{\Gamma_k} \cB^0,\,\phi_k] u
		                          =   0, \\[0.5em]
		H^{\pm 1}_{\tau, k}(u) & := & [P_{\Gamma_k} \cB^{\pm 1},\,\phi_k] u
		                          =   \mu P_{\Gamma_k} (\nabla \phi_k \otimes u \pm u \otimes \nabla \phi_k) \nu_k, \\[0.5em]
		H^0_{\nu, k}(u)        & := & [Q_{\Gamma_k} \cB^0,\,\phi_k] (u)
		                          =   0, \\[0.5em]
		H^{+1}_{\nu, k}(u)     & := & [Q_{\Gamma_k} \cB^1,\,\phi_k] (u)
		                          =   2 \mu ([\nabla \phi_k \otimes u] \nu_k \cdot \nu_k) \nu_k
	\end{array}
\end{equation*}
and $h_{\tau, k} = \phi_k h_v$, $h_{\nu, k} = 0$, if $\beta = 0$, resp.~$h_{\tau, k} = 0$, $h_{\nu, k} = \phi_k h_w$, if $\beta = 1$.
Therefore, $u_0 = v_0 + \nabla \eta_0$, $p_0 = q_0 - \rho \partial_t \eta_0 + \mu \Delta \eta_0$, and $u_k = v_k + \nabla \eta_k + \bar{v}_k$, $p_k = q_k - \rho \partial_t \eta_k + \mu \Delta  \eta_k + \bar{q}_k$ for $k = 1,\,\dots,\,N$,
where
\begin{equation*}
	\eta_0 \in {}_0 H^1_p((0,\,a),\,H^2_p(\bR^n)) \cap H^{1/2}_p((0,\,a),\,H^3_p(\bR^n))
\end{equation*}
is a solution to the wholespace problem
\begin{equation*}
	- \Delta \eta_0 = - G_0(u) \quad \quad \mbox{in}\ (0,\,a) \times \bR^n
\end{equation*}
and
\begin{equation*}
	\eta_k \in {}_0 H^1_p((0,\,a),\,H^2_p(\bR^n_{\omega_k})) \cap H^{1/2}_p((0,\,a),\,H^3_p(\bR^n_{\omega_k}))
\end{equation*}
is a solution to the bent halfspace problem
\begin{equation*}
	- \Delta \eta_k = - G_k(u) \quad \quad \mbox{in}\ (0,\,a) \times \bR^n_{\omega_k}
\end{equation*}
complemented by the boundary condition
\begin{equation*}
	\partial_\nu \eta_k = 0 \quad \quad \mbox{on}\ (0,\,a) \times \partial \bR^n_{\omega_k},
\end{equation*}
if $\beta = 0$, resp.
\begin{equation*}
	[\eta_k] = 0 \quad \quad \mbox{on}\ (0,\,a) \times \partial \bR^n_{\omega_k},
\end{equation*}
if $\beta = 1$ for $k = 1,\,\dots,\,N$.
Moreover $(v_0,\,p_0)$ is a maximal regular solution to the wholespace Stokes equations
\begin{equation}
	\label{eqn:part-problem-ws-mod}
	\begin{array}{c}
		\rho \partial_t v_0 - \mu \Delta v_0 + \nabla q_0 = \rho F_0(u,\,p), \quad \mbox{div}\,v_0 = 0
			\quad \quad \mbox{in}\ (0,\,a) \times \bR^n, \\[0.5em]
		v_0(0) = 0
			\quad \quad \mbox{in}\ \bR^n,
	\end{array}
\end{equation}
$(v_k,\,q_k)$ is a maximal regular solution to the bent halfspace Stokes equations
\begin{equation}
	\label{eqn:part-problem-bhs-mod}
	\begin{array}{c}
		\rho \partial_t v_k - \mu \Delta v_k + \nabla q_k = \rho F_k(u,\,p), \quad \mbox{div}\,v_k = 0
			\quad \quad \mbox{in}\ (0,\,a) \times \bR^n_{\omega_k}, \\[0.5em]
		\begin{array}{c} P_{\Gamma_k} \cB^\alpha(v_k) = H^\alpha_{\tau, k}(u) - P_{\Gamma_k} \cB^\alpha(\nabla \eta_k), \\[0.5em]
		\begin{array}{l} Q_{\Gamma_k} \cB^\beta(v_k,\,q_k) = H^\beta_{\nu, k}(u) \\[0.5em] \quad - \ Q_{\Gamma_k} \cB^\beta(\nabla \eta_k,\,\mu \Delta \eta_k - \rho \partial_t \eta_k) \end{array} \end{array}
			\quad \quad \mbox{on}\ (0,\,a) \times \partial \bR^n_{\omega_k}, \\[2.5em]
		v_k(0) = 0
			\quad \quad \mbox{in}\ \bR^n_{\omega_k}
	\end{array}
\end{equation}
and $(\bar{v}_k,\,\bar{q}_k)$ is a maximal regular solution to the bent halfspace Stokes equations
\begin{equation}
	\label{eqn:part-problem-bhs-data}
	\begin{array}{c}
		\rho \partial_t \bar{v}_k - \mu \Delta \bar{v}_k + \nabla \bar{q}_k = 0, \quad \mbox{div}\,\bar{v}_k = 0
			\quad \quad \mbox{in}\ (0,\,a) \times \bR^n_{\omega_k}, \\[0.5em]
		\begin{array}{c} P_{\Gamma_k} \cB^\alpha(\bar{v}_k) = h_{\tau, k}, \\[0.5em]
		Q_{\Gamma_k} \cB^\beta(\bar{v}_k,\,\bar{q}_k) = h_{\nu, k} \end{array}
			\quad \quad \mbox{on}\ (0,\,a) \times \partial \bR^n_{\omega_k}, \\[1.5em]
		\bar{v}_k(0) = 0
			\quad \quad \mbox{in}\ \bR^n_{\omega_k}
	\end{array}
\end{equation}
for $k = 1,\,\dots,\,N$.
Now, every term on the right hand side of \eqnref{part-problem-ws-mod} and \eqnref{part-problem-bhs-mod} carries additional time regularity.
To be precise, we may exploit the extra time regularity of the pressure to obtain
\begin{equation*}
	\begin{array}{rcl}
		{\|\rho F_k(u,\,p)\|}_{{}_0 H^{\sigma}_p((0, a), L_p(\bR^n_{\omega_k},\,\bR^n))}
			& \leq & c \left( {\|u\|}_{{}_0 \bX_u(a)} + {\|p\|}_{{}_0 H^{\sigma}_p((0, a), L_p(\Omega))} \right) \\[1em]
			& \leq & c \left( {\|u\|}_{{}_0 \bX_u(a)} + \beta {\|h\|}_{{}_0 \bY^{\alpha, \beta}_{h, \gamma}(a)} \right)
	\end{array}
\end{equation*}
and, therefore,
\begin{equation*}
	{\|\rho F_k(u,\,p)\|}_{L_p((0,\,a) \times \bR^n_{\omega_k},\,\bR^n)} \leq c a^\tau \left( {\|u\|}_{{}_0 \bX_u(a)} + \beta {\|h\|}_{{}_0 \bY^{\alpha, \beta}_{h, \gamma}(a)} \right)
\end{equation*}
for $k = 0,\,\dots,\,N$.
The commutators of the boundary conditions are of lower order and we have
\begin{equation*}
	{\|H^\alpha_{\tau, k}(u) + H^\beta_{\nu, k}(u)\|}_{{}_0 \bY^{\alpha, \beta}_{h, \gamma}(a)} \leq c a^\tau {\|u\|}_{{}_0 \bX_u(a)}
\end{equation*}
for $k = 1,\,\dots,\,N$.
If $\beta = 0$, we have $Q_{\Gamma_k} \cB^\beta(\nabla \eta_k,\,\mu \Delta \eta_k - \rho \partial_t \eta_k) = 0$
and, thus,
\begin{equation*}
	{\|v_k\|}_{{}_0 \bX_u(a)} + {\|q_k\|}_{\bX^\beta_{p, \gamma}(a)} \leq c a^\tau {\|u\|}_{{}_0 \bX_u(a)}
\end{equation*}
as well as
\begin{equation*}
	\begin{array}{rcl}
		{\|q_k\|}_{{}_0 H^{\sigma}_p((0, a), L_p(\bR^n_{\omega_k}))}
			& \leq & c \left( {\|v_k\|}_{{}_0 \bX_u(a)} + {\|\rho F_k(u,\,p)\|}_{{}_0 H^{\sigma}_p((0, a), L_p(\bR^n_{\omega_k},\,\bR^n))} \right)  \\[1em]
			& \leq & c {\|u\|}_{{}_0 \bX_u(a)}.
	\end{array}
\end{equation*}
for $k = 0,\,\dots,\,N$.
If $\beta = 1$, we have
\begin{equation*}
	\begin{array}{rcl}
		Q_{\Gamma_k} \cB^\beta(\nabla \eta_k,\,\mu \Delta \eta_k - \rho \partial_t \eta_k)
			& = & 2 \mu [\nabla^2 \eta_k]\nu \cdot \nu - [\mu \Delta \eta_k - \rho \partial_t \eta_k] \\[0.5em]
			& = & 2 \mu [\nabla^2 \eta_k]\nu \cdot \nu - \mu [\Delta \eta_k].
	\end{array}
\end{equation*}
However,
\begin{equation*}
	\begin{array}{l}
		{\|\nabla^2 \eta_k\|}_{{}_0 H^{1/2}_p((0, a), L_p(\bR^n_{\omega_k})) \cap L_p((0, a), H^1_p(\bR^n_{\omega_k}))} \\[0.5em]
		\qquad \leq c a^\tau {\|\nabla^2 w_k\|}_{{}_0 H^1_p((0, a), L_p(\bR^n_{\omega_k})) \cap H^{1/2}_p((0, a), H^1_p(\bR^n_{\omega_k}))} \\[0.5em]
		\qquad \leq c a^\tau {\|u\|}_{{}_0 \bX_u(a)}
	\end{array}
\end{equation*}
and we again infer
\begin{equation*}
	{\|v_k\|}_{{}_0 \bX_u(a)} + {\|q_k\|}_{\bX^\beta_{p, \gamma}(a)} \leq c a^\tau {\|u\|}_{{}_0 \bX_u(a)}
\end{equation*}
as well as
\begin{equation*}
	\begin{array}{rcl}
		{\|q_k\|}_{{}_0 H^{\sigma}_p((0, a), L_p(\bR^n_{\omega_k}))}
			& \leq & c \left( {\|v_k\|}_{{}_0 \bX_u(a)} + {\|\rho F_k(u,\,p)\|}_{{}_0 H^{\sigma}_p((0, a), L_p(\bR^n_{\omega_k},\,\bR^n))} \right) \\[1em]
			&      & \qquad + \ c {\|[\nabla^2 \eta_k]\|}_{{}_0 \bY^{\alpha, \beta}_{h, \gamma}(a)} \\[0.5em]
			& \leq & c {\|u\|}_{{}_0 \bX_u(a)}.
	\end{array}
\end{equation*}
for $k = 1,\,\dots,\,N$.
Finally, we have
\begin{equation*}
	{\|\bar{v}_k\|}_{{}_0 \bX_u(a)} + {\|\bar{q}_k\|}_{\bX^\beta_{p, \gamma}(a)} \leq c {\|h\|}_{{}_0 \bY^{\alpha, \beta}_{h, \gamma}(a)}
\end{equation*}
and
\begin{equation*}
		{\|\bar{q}_k\|}_{H^{\sigma}_p((0, a), L_p(\bR^n_{\omega_k}))}
			\leq c \left( {\|\bar{v}_k\|}_{{}_0 \bX_u(a)} + \beta {\|h\|}_{{}_0 \bY^{\alpha, \beta}_{h, \gamma}(a)} \right) 
			\leq c {\|h\|}_{{}_0 \bY^{\alpha, \beta}_{h, \gamma}(a)}
\end{equation*}
for $k = 1,\,\dots,\,N$.
Now, these estimates imply
\begin{equation*}
	\begin{array}{rcl}
	{\|\mu \Delta \eta_0 - \rho \partial_t \eta_0\|}_{{}_0 H^{\sigma}_p((0, a), L_p(\bR^n_{\omega_k}))}
		&    = & {\|p_0 - q_0\|}_{{}_0 H^{\sigma}_p((0, a), L_p(\bR^n_{\omega_k}))} \\[1em]
		& \leq & c \left( {\|u\|}_{{}_0 \bX_u(a)} + {\|h\|}_{{}_0 \bY^{\alpha, \beta}_{h, \gamma}(a)} \right)
	\end{array}
\end{equation*}
and
\begin{equation*}
	\begin{array}{rcl}
		{\|\mu \Delta \eta_k - \rho \partial_t \eta_k\|}_{{}_0 H^{\sigma}_p((0, a), L_p(\bR^n_{\omega_k}))}
			& =    & {\|p_k - q_k - \bar{q}_k\|}_{{}_0 H^{\sigma}_p((0, a), L_p(\bR^n_{\omega_k}))} \\[0.5em]
			& \leq & c \left( {\|u\|}_{{}_0 \bX_u(a)} + {\|h\|}_{{}_0 \bY^{\alpha, \beta}_{h, \gamma}(a)} \right)
	\end{array}
\end{equation*}
for $k = 1,\,\dots,\,N$ and since
\begin{equation*}
	{\|\Delta \eta_k\|}_{{}_0 H^1_p((0, a), L_p(\bR^n_{\omega_k}))} \leq c {\|u\|}_{{}_0 \bX_u(a)}
\end{equation*}
we further infer
\begin{equation*}
	{\|\rho \partial_t \eta_k\|}_{{}_0 H^{\sigma}_p((0, a), L_p(\bR^n_{\omega_k}))},{\|\mu \Delta \eta_k\|}_{{}_0 H^{\sigma}_p((0, a), L_p(\bR^n_{\omega_k}))} \leq c \left( {\|u\|}_{{}_0 \bX_u(a)} + {\|h\|}_{{}_0 \bY^{\alpha, \beta}_{h, \gamma}(a)} \right)
\end{equation*}
for $k = 0,\,\dots,\,N$.
Thus, using
\begin{equation*}
	{\|\partial_t \eta_k\|}_{L_p((0, a), H^2_p(\bR^n_{\omega_k}))}, \ {\|\Delta \eta_k\|}_{{}_0 H^1_p((0, a), L_p(\bR^n_{\omega_k})) \cap L_p((0, a), H^2_p(\bR^n_{\omega_k}))} \leq c {\|u\|}_{{}_0 \bX_u(a)}
\end{equation*}
we also have
\begin{equation*}
	\begin{array}{rcl}
		{\|\partial_t \nabla \eta_k\|}_{L_p((0, a), L_p(\bR^n_{\omega_k}))}
			& \leq & c a^\tau {\|\partial_t \eta_k\|}_{{}_0 H^{\sigma/2}_p((0, a), H^1_p(\bR^n_{\omega_k}))} \\[0.5em]
			& \leq & c a^\tau {\|\partial_t \eta_k\|}_{{}_0 H^{\sigma}_p((0, a), L_p(\bR^n_{\omega_k})) \cap L_p((0, a), H^2_p(\bR^n_{\omega_k}))} \\[0.5em]
			& \leq & c a^\tau {\|u\|}_{{}_0 \bX_u(a)} + c {\|h\|}_{{}_0 \bY^{\alpha, \beta}_{h, \gamma}(a)}
	\end{array}
\end{equation*}
and
\begin{equation*}
	\begin{array}{rcl}
		{\|\Delta \nabla \eta_k\|}_{L_p((0, a), L_p(\bR^n_{\omega_k}))}
			& \leq & c a^\tau {\|\Delta \eta_k\|}_{{}_0 H^{1/2}_p((0, a), H^1_p(\bR^n_{\omega_k}))} \\[0.5em]
			& \leq & c a^\tau {\|\Delta \eta_k\|}_{{}_0 H^1_p((0, a), L_p(\bR^n_{\omega_k})) \cap L_p((0, a), H^2_p(\bR^n_{\omega_k}))} \\[0.5em]
			& \leq & c a^\tau {\|u\|}_{{}_0 \bX_u(a)}
	\end{array}
\end{equation*}
for $k = 0,\,\dots,\,N$.
Hence, we have
\begin{equation*}
	{\|\nabla \eta_k\|}_{{}_0 \bX_u(a)} + {\|\mu \Delta \eta_k - \rho \partial_t \eta_k\|}_{\bX^\beta_{p, \gamma}(a)} \leq c a^\tau {\|u\|}_{{}_0 \bX_u(a)} + c {\|h\|}_{{}_0 \bY^{\alpha, \beta}_{h, \gamma}(a)},
\end{equation*}
which together with the above estimates implies
\begin{equation*}
	{\|u_k\|}_{{}_0 \bX_u(a)} + {\|p_k\|}_{\bX^\beta_{p, \gamma}(a)} \leq c a^\tau {\|u\|}_{{}_0 \bX_u(a)} + c {\|h\|}_{{}_0 \bY^{\alpha, \beta}_{h, \gamma}(a)},
\end{equation*}
Finally, we have
\begin{equation*}
	(u,\,p) = \sum^N_{k = 0} (\psi_k u_k,\,\psi_k p_k) = \sum^N_{k = 0} (u_k,\,p_k)
\end{equation*}
and choosing $a > 0$ sufficiently small, we deduce the uniqueness of the solution as well as the maximal regularity estimate.

To prove solvability of the Stokes equations,
let $\alpha \in \{\,-1,\,0,\,1\,\}$, $\beta \in \{\,0,\,1\,\}$ and $\gamma = -\infty$, if $\beta = 0$, resp.~$\gamma = 1/2 - 1/2p$, if $\beta = 1$.
We consider the bounded linear operator
\begin{equation*}
	L^{\alpha, \beta}: {}_0 \bX^\beta_\gamma(a) \longrightarrow {}_0 \bY^{\alpha, \beta}_\gamma(a)
\end{equation*}
defined by the left hand side of the Stokes equations,
where we set
\begin{equation*}
	{}_0 \bX^\beta_\gamma(a) := \left\{\,(u,\,p) \in \bX_u(a) \times \bX^\beta_{p, \gamma}(a)\,:\,u(0) = 0\,\right\}
\end{equation*}
and define
\begin{equation*}
	{}_0 \bY^{\alpha, \beta}_\gamma(a) := \left\{\,(f,\,g,\,h) \in \bY_f(a) \times \bY_g(a) \times \bY^{\alpha, \beta}_{h, \gamma}(a)\,:\,(f,\,g,\,h,\,0) \in \bY^{\alpha, \beta}_\gamma(a)\,\right\}.
\end{equation*}
By the above considerations, $L^{\alpha, \beta}$ is injective with closed range and it remains to prove surjectivity.
To accomplish this, it is convenient to construct a bounded linear right inverse
\begin{equation*}
	S^{\alpha, \beta}: {}_0 \bY^{\alpha, \beta}_\gamma(a) \longrightarrow {}_0 \bX^\beta_\gamma(a).
\end{equation*}

First, let $\beta = 0$ and $\gamma = -\infty$.
By \Thmref{splitting} and the construction at the beginning of this section, there exists a bounded linear operator
\begin{equation*}
	S^{\alpha, \beta}_1: {}_0 \bY^{\alpha, \beta}_\gamma(a) \longrightarrow {}_0 \bX^\beta_\gamma(a),
\end{equation*}
such that $(\bar{v},\,\bar{p}) = S^{\alpha, \beta}_1(f,\,g,\,h)$ satisfies
\begin{equation*}
	\begin{array}{c}
		\rho\partial_t \bar{v} - \mu \Delta \bar{v} + \nabla \bar{q} = \rho f,
		\quad \quad \mbox{div}\,\bar{v} = g
			\quad \quad \mbox{in}\ (0,\,a) \times \Omega, \\[0.5em]
		Q_\Gamma \cB^\beta(\bar{v}) = Q_\Gamma h,
			\quad \quad \mbox{on}\ (0,\,a) \times \partial \Omega, \\[0.5em]
		\bar{v}(0) = 0
			\quad \quad \mbox{in}\ \Omega.
	\end{array}
\end{equation*}
Now, we construct a bounded linear operator
\begin{equation*}
	S^{\alpha, \beta}_2: {}_\tau \bY^{\alpha, \beta}_{h, \gamma}(a) := \left\{\,\eta \in \bY^{\alpha, \beta}_{h, \gamma}(a)\,:\,Q_\Gamma \eta = 0,\,\eta(0) = 0\,\right\} \longrightarrow {}_0 \bX^\beta(a)
\end{equation*}
and then set
\begin{equation*}
	S^{\alpha, \beta}(f,\,g,\,h) := S^{\alpha, \beta}_1(f,\,g,\,h) + S^{\alpha, \beta}_2 (P_\Gamma h - P_\Gamma \cB^\alpha S^{\alpha, \beta}_1(f,\,g,\,h))
\end{equation*}
for $(f,\,g,\,h) \in {}_0 \bY^{\alpha, \beta}_\gamma(a)$.
Given $\bar{h} \in {}_\tau \bY^{\alpha, \beta}_{h, \gamma}(a)$, we define $(v_k,\,q_k)$ to be the solution to the bent halfspace Stokes equations
\begin{equation*}
	\begin{array}{c}
		\rho \partial_t v_k - \mu \Delta v_k + \nabla q_k = 0, \quad \mbox{div}\,v_k = 0
			\quad \quad \mbox{in}\ (0,\,a) \times \bR^n_{\omega_k}, \\[0.5em]
		P_{\Gamma_k} \cB^\alpha(v_k) = \phi_k P_\Gamma \bar{h},
		\quad Q_{\Gamma_k} \cB^\beta(v_k,\,q_k) = 0
			\quad \quad \mbox{on}\ (0,\,a) \times \partial \bR^n_{\omega_k}, \\[0.5em]
		v_k(0) = 0
			\quad \quad \mbox{in}\ \bR^n_{\omega_k}
	\end{array}
\end{equation*}
for $k = 1,\,\dots,\,N$.
Then we define $\eta_k \in {}_0 H^1_p((0,\,a),\,H^2_p(\Omega)) \cap H^{1/2}_p((0,\,a),\,H^3_p(\Omega))$ to be the unique solution to
\begin{equation*}
	- \Delta \eta_k = - \nabla \psi_k \cdot v_k \quad \mbox{in} \ \Omega, \qquad \partial_\nu \eta_k = 0 \quad \mbox{on} \ \partial \Omega
\end{equation*}
again for $k = 1,\,\dots,\,N$.
Finally, we set
\begin{equation*}
	(v,\,q) = \sum^N_{k = 1} (\psi_k v_k - \nabla \eta_k,\,q_k + \rho \partial_t \eta_k - \mu \Delta \eta_k)
\end{equation*}
and define $S^{\alpha, \beta}_2 \bar{h} := (v,\,q)$.
This way for $(f,\,g,\,h) \in {}_0 \bY^{\alpha, \beta}_\gamma(a)$ we have
\begin{equation*}
	\begin{array}{l}
		L^{\alpha, \beta} S^{\alpha, \beta}(f,\,g,\,h) \\[0.5em]
			\qquad = L^{\alpha, \beta} S^{\alpha, \beta}_1(f,\,g,\,h) + L^{\alpha, \beta} S^{\alpha, \beta}_2 (P_\Gamma h - P_\Gamma \cB^\alpha S^{\alpha, \beta}_1(f,\,g,\,h)) \\[0.5em]
			\qquad = L^{\alpha, \beta} (\bar{v},\,\bar{q}) + L^{\alpha, \beta} (v,\,q)
	\end{array}
\end{equation*}
with
\begin{equation*}
	(\bar{v},\,\bar{q}) = S^{\alpha, \beta}_1(f,\,g,\,h), \qquad (v,\,q) = S^{\alpha, \beta}_2 (P_\Gamma h - P_\Gamma \cB^\alpha \bar{v}).
\end{equation*}
Now observe,
\begin{equation*}
	\rho \partial_t (\bar{v} + v) - \mu \Delta (\bar{v} + v) + \nabla (\bar{q} + q) = \rho f + \sum^N_{k = 1} \Big( - \mu [\Delta,\,\psi_k] v_k + [\nabla,\,\psi_k] q_k \Big)
\end{equation*}
by definition of $S^{\alpha, \beta}_1$ and $S^{\alpha, \beta}_2$.
Furthermore
\begin{equation*}
	\mbox{div}\,(\bar{v} + v) = g,\ P_\Gamma \cB^\alpha (\bar{v} + v) = P_\Gamma h + \sum^N_{k = 1} \Big( [P_{\Gamma_k}\cB^\alpha,\,\psi_k] v_k - P_\Gamma \cB^\alpha(\nabla \eta_k) \Big)
\end{equation*}
as well as
\begin{equation*}
	Q_\Gamma \cB^\beta(\bar{v} + v,\,\bar{q} + q) = Q_\Gamma h \quad \mbox{and} \quad (\bar{v} + v)(0) = 0.
\end{equation*}
Hence, $L^{\alpha, \beta} S^{\alpha, \beta} = I + R^{\alpha, \beta}$ with
\begin{equation*}
	R^{\alpha, \beta}_f(f,\,g,\,h) = \sum^N_{k = 1} \Big( - \mu [\Delta,\,\psi_k] v_k + [\nabla,\,\psi_k] q_k \Big)
\end{equation*}
and $R^{\alpha, \beta}_g(f,\,g,\,h) = 0$, $Q_\Gamma R^{\alpha, \beta}_h(f,\,g,\,h) = 0$ as well as
\begin{equation*}
	P_\Gamma R^{\alpha, \beta}_h(f,\,g,\,h) = \sum^N_{k = 1} \Big( [P_{\Gamma_k}\cB^\alpha,\,\psi_k] v_k - P_\Gamma \cB^\alpha(\nabla \eta_k) \Big).
\end{equation*}
The subscripts denote the different components of $R^{\alpha, \beta}$.
Now,
\begin{equation*}
	{\|v_k\|}_{{}_0 \bX_u(a)} + {\|q_k\|}_{\bX^\beta_{p, \gamma}(a)} \leq c {\|\bar{h}\|}_{{}_0 \bY^{\alpha, \beta}_{h, \gamma}(a)}
\end{equation*}
with $\bar{h} = P_\Gamma h - P_\Gamma \cB^\alpha \bar{v}$ and
\begin{equation*}
		{\|q_k\|}_{{}_0 H^{\sigma}_p((0, a), L_p(\bR^n_{\omega_k}))}
			\leq c {\|v_k\|}_{{}_0 \bX_u(a)} 
			\leq c {\|\bar{h}\|}_{{}_0 \bY^{\alpha, \beta}_{h, \gamma}(a)},
\end{equation*}
which implies
\begin{equation*}
	\begin{array}{rcl}
		{\|[\nabla,\,\psi_k] q_k - \mu [\Delta,\,\psi_k] v_k\|}_{\bY_f(a)}
			& \leq & c a^\tau \left( {\|v_k\|}_{{}_0 \bX_u(a)} + {\|q_k\|}_{{}_0 H^{\sigma}_p((0, a), L_p(\bR^n_{\omega_k}))} \right) \\[1em]
			& \leq & c a^\tau {\|\bar{h}\|}_{{}_0 \bY^{\alpha, \beta}_{h, \gamma}(a)},
	\end{array}
\end{equation*}
since the commutator terms are of lower order.
Analogously
\begin{equation*}
	{\|[P_{\Gamma_k}\cB^\alpha,\,\psi_k] v_k\|}_{{}_0 \bY^{\alpha, \beta}_{h, \gamma}(a)} \leq c a^\tau {\|v_k\|}_{{}_0 \bX_u(a)} \leq c a^\tau {\|\bar{h}\|}_{{}_0 \bY^{\alpha, \beta}_{h, \gamma}(a)}.
\end{equation*}
To estimate the final term $P_\Gamma \cB^\alpha(\nabla \eta_k)$ we set $\bar{v}_k := \psi_k v_k - \nabla \eta_k$,
$\bar{q}_k := \psi_k q_k + \rho \partial_t \eta_k - \mu \Delta \eta_k$ and observe, that
\begin{equation*}
	\begin{array}{c}
		\rho \partial_t \bar{v}_k - \mu \Delta \bar{v}_k + \nabla \bar{q}_k = - \mu [\Delta,\,\psi_k] v_k + [\nabla,\,\psi_k] q_k, \quad \mbox{div}\,\bar{v}_k = 0
			\quad \mbox{in}\ (0,\,a) \times \Omega, \\[0.5em]
		\begin{array}{c} P_\Gamma \cB^\alpha(\bar{v}_k) = \phi_k P_\Gamma \bar{h} + [P_{\Gamma_k}\cB^\alpha,\,\psi_k] v_k - P_\Gamma \cB^\alpha(\nabla \eta_k), \\[0.5em]
		Q_{\Gamma_k} \cB^\beta(\bar{v}_k,\,\bar{q}_k) = 0 \end{array}
			\quad \mbox{on}\ (0,\,a) \times \partial \Omega, \\[1.5em]
		\bar{v}_k(0) = 0
			\quad \mbox{in}\ \Omega.
	\end{array}
\end{equation*}
Now,
\begin{equation*}
	\begin{array}{rcl}
		{\|\bar{v}_k\|}_{{}_0 \bX_u(a)}
			& \leq & c \left( {\|v_k\|}_{{}_0 \bX_u(a)} + {\|q_k\|}_{L_p((0,\,a) \times \bR^n_{\omega_k}))} + {\|\nabla \eta_k\|}_{{}_0 \bX_u(a)} + {\|\bar{h}\|}_{{}_0 \bY^{\alpha, \beta}_{h, \gamma}(a)} \right) \\[1em]
			& \leq & c {\|\bar{h}\|}_{{}_0 \bY^{\alpha, \beta}_{h, \gamma}(a)}
	\end{array}
\end{equation*}
and
\begin{equation*}
	\begin{array}{l}
		{\|[\nabla,\,\psi_k] q_k - \mu [\Delta,\,\psi_k] v_k\|}_{{}_0 H^{\sigma}_p((0, a), L_p(\bR^n_{\omega_k}))} \\[1em]
			\qquad \leq c \left( {\|v_k\|}_{{}_0 \bX_u(a)} + {\|q_k\|}_{{}_0 H^{\sigma}_p((0, a), L_p(\bR^n_{\omega_k}))} \right)
			\leq c {\|\bar{h}\|}_{{}_0 \bY^{\alpha, \beta}_{h, \gamma}(a)}
	\end{array}
\end{equation*}
imply
\begin{equation*}
	\begin{array}{l}
		{\|\bar{q}_k\|}_{{}_0 H^{\sigma}_p((0, a), L_p(\bR^n_{\omega_k}))} \\[1em]
			\qquad \leq c \left( {\|\bar{v}_k\|}_{{}_0 \bX_u(a)} + {\|[\nabla,\,\psi_k] q_k - \mu [\Delta,\,\psi_k] v_k\|}_{{}_0 H^{\sigma}_p((0, a), L_p(\bR^n_{\omega_k}))} \right) \\[1em]
			\qquad \leq c {\|\bar{h}\|}_{{}_0 \bY^{\alpha, \beta}_{h, \gamma}(a)}
	\end{array}
\end{equation*}
and we infer
\begin{equation*}
	{\|\mu \Delta \eta_k - \rho \partial_t \eta_k\|}_{{}_0 H^{\sigma}_p((0, a), L_p(\Omega))} = {\|\psi_k q_k - \bar{q}_k\|}_{{}_0 H^{\sigma}_p((0, a), L_p(\Omega))} \leq c {\|\bar{h}\|}_{{}_0 \bY^{\alpha, \beta}_{h, \gamma}(a)}.
\end{equation*}
Since
\begin{equation*}
	{\|\Delta \eta_k\|}_{{}_0 H^1_p((0, a), L_p(\Omega))} \leq c {\|v_k\|}_{{}_0 \bX_u(a)} \leq c {\|\bar{h}\|}_{{}_0 \bY^{\alpha, \beta}_{h, \gamma}(a)}
\end{equation*}
we further infer
\begin{equation*}
	{\|\rho \partial_t \eta_k\|}_{{}_0 H^{\sigma}_p((0, a), L_p(\Omega))}, \ {\|\mu \Delta \eta_k\|}_{{}_0 H^{\sigma}_p((0, a), L_p(\Omega))} \leq c {\|\bar{h}\|}_{{}_0 \bY^{\alpha, \beta}_{h, \gamma}(a)}.
\end{equation*}
Thus, using
\begin{equation*}
	\begin{array}{l}
		{\|\partial_t \eta_k\|}_{L_p((0, a), H^2_p(\Omega))}, \ {\|\Delta \eta_k\|}_{{}_0 H^1_p((0, a), L_p(\Omega)) \cap L_p((0, a), H^2_p(\Omega))} \\[0.5em]
			\qquad \leq c {\|v_k\|}_{{}_0 \bX_u(a)} \leq c {\|\bar{h}\|}_{{}_0 \bY^{\alpha, \beta}_{h, \gamma}(a)}
	\end{array}
\end{equation*}
we also have
\begin{equation*}
	\begin{array}{rcl}
		{\|\partial_t \nabla \eta_k\|}_{L_p((0, a), L_p(\Omega))}
			& \leq & c a^\tau {\|\partial_t \eta_k\|}_{{}_0 H^{\sigma/2}_p((0, a), H^1_p(\Omega))} \\[0.5em]
			& \leq & c a^\tau {\|\partial_t \eta_k\|}_{{}_0 H^{\sigma}_p((0, a), L_p(\Omega)) \cap L_p((0, a), H^2_p(\Omega))} \\[0.5em]
			& \leq & c a^\tau {\|\bar{h}\|}_{{}_0 \bY^{\alpha, \beta}_{h, \gamma}(a)}
	\end{array}
\end{equation*}
and
\begin{equation*}
	\begin{array}{rcl}
		{\|\Delta \nabla \eta_k\|}_{L_p((0, a), L_p(\Omega))}
			& \leq & c a^\tau {\|\Delta \eta_k\|}_{{}_0 H^{1/2}_p((0, a), H^1_p(\Omega))} \\[0.5em]
			& \leq & c a^\tau {\|\Delta \eta_k\|}_{{}_0 H^1_p((0, a), L_p(\Omega)) \cap L_p((0, a), H^2_p(\Omega))} \\[0.5em]
			& \leq & c a^\tau {\|\bar{h}\|}_{{}_0 \bY^{\alpha, \beta}_{h, \gamma}(a)}.
	\end{array}
\end{equation*}
Hence, we have
\begin{equation*}
	{\|\nabla \eta_k\|}_{{}_0 \bX_u(a)} + {\|\mu \Delta \eta_k - \rho \partial_t \eta_k\|}_{\bX^\beta_{p, \gamma}(a)} \leq c a^\tau {\|\bar{h}\|}_{{}_0 \bY^{\alpha, \beta}_{h, \gamma}(a)},
\end{equation*}
which implies
\begin{equation*}
	{\|P_\Gamma \cB^\alpha(\nabla \eta_k)\|}_{{}_0 \bY^{\alpha, \beta}_{h, \gamma}} \leq c a^\tau {\|\bar{h}\|}_{{}_0 \bY^{\alpha, \beta}_{h, \gamma}(a)}.
\end{equation*}
Since
\begin{equation*}
	{\|\bar{h}\|}_{{}_0 \bY^{\alpha, \beta}_{h, \gamma}(a)} \leq c {\|(f,\,g,\,h)\|}_{{}_0 \bY^{\alpha, \beta}_\gamma(a)}
\end{equation*}
we obtain
\begin{equation*}
	{\|R^{\alpha, \beta}(f,\,g,\,h)\|}_{{}_0 \bY^{\alpha, \beta}_\gamma(a)} \leq c a^\tau {\|(f,\,g,\,h)\|}_{{}_0 \bY^{\alpha, \beta}_\gamma(a)}.
\end{equation*}
Thus, choosing $a > 0$ sufficiently small, $R^{\alpha, \beta}$ is invertible by a Neumann series, which yields the right inverse $S^{\alpha, \beta} (I + R^{\alpha, \beta})^{-1}$ for $L^{\alpha, \beta}$.

The case $\beta = 1$ and $\gamma = 1/2 - 1/2p$ may be treated analogously.
The only difference is to use a Dirichlet problem to construct the $\eta_k$ instead of a Neumann problem.
However, a more elegant proof is also available by a homotopy argument.
In fact, one may repeat the arguments of Sections \ref{sec:halfspace} to \ref{sec:domain},
to infer, that the bounded linear operators
\begin{equation*}
	L^\alpha_\tau: {}_0 \bX^{+1}_{1/2 - 1/2p}(a) \longrightarrow {}_0 \bY^{\alpha, +1}_{1/2 - 1/2p}(a)
\end{equation*}
defined by the left hand side of the Stokes equations
\begin{equation*}
	\begin{array}{c}
		\rho\partial_t u - \mu \Delta u + \nabla p = \rho f,
		\quad \quad \mbox{div}\,u = g
			\quad \quad \mbox{in}\ (0,\,a) \times \Omega, \\[0.5em]
		P_\Gamma \cB^\alpha u = P_\Gamma h,
		\quad \quad \tau\,2 \mu\,\partial_\nu u \cdot \nu - [p] = h \cdot \nu,
			\quad \quad \mbox{on}\ (0,\,a) \times \partial \Omega, \\[0.5em]
		u(0) = 0
			\quad \quad \mbox{in}\ \Omega.
	\end{array}
\end{equation*}
are injective with closed range for all $\tau \in [0,\,1]$.
Hence, these operators are semi-Fredholm.
Thus, $L^\alpha_1 = L^{\alpha, +1}$ is Fredholm of index zero and, hence, an isomorphism, since $L^\alpha_0 = L^{\alpha, -1}$ enjoys this property by \Thmref{splitting}.
This completes the proof of \Thmref{s}.

\section{Local Well-Posedness of the Navier-Stokes Equations}\label{sec:n}
Finally, we prove \Thmref{n} for a bounded domain $\Omega \subseteq \bR^n$ with boundary $\Gamma := \partial \Omega$ of class $C^{3-}$.
Following \Remref{n}, it is sufficient to construct unique local-in-time strong solutions to the Navier-Stokes equations
\begin{equation*}
	\tag*{${(\mbox{N})}^{\infty, \Omega, \cB}_{f, u_0}$}
	\begin{array}{c}
		\rho \partial_t u + \mbox{div}(\rho u \otimes u - S) = \rho f,
			\quad \quad \mbox{div}\,u = 0
			\quad \quad \mbox{in}\ (0,\,\infty) \times \Omega, \\[0.5em]
		\cB(u,\,p) = 0
			\quad \quad \mbox{on}\ (0,\,\infty) \times \partial\Omega, \\[0.5em]
		u(0) = u_0
			\quad \quad \mbox{in}\ \Omega
	\end{array}
\end{equation*}
where $\cB = \cB^{\alpha, \beta}$ with $\alpha,\,\beta \in \{\,-1,\,0,\,1\,\}$ realizes one of the boundary conditions (B),
for given data $f \in L_p((0,\,\infty) \times \Omega)$ and $u_0 \in W^{2 - 2/p}_p(\Omega)$,
where we assume $n + 2 < p < \infty$.

First, let $(u^\ast,\,p^\ast)$ be the unique maximal regular solution to the Stokes equations ${(\mbox{S})}^{1, \Omega, \cB}_{f, 0, 0, u_0}$.
Then, $(\bar{u},\,\bar{p}) := (u - u^\ast,\,p - p^\ast)$ is a solution to
\begin{equation*}
	\begin{array}{c}
		\rho \partial_t \bar{u} - \mu \Delta \bar{u} + \nabla \bar{p} = N^\ast(\bar{u}),
			\quad \quad \mbox{div}\,\bar{u} = 0
			\quad \quad \mbox{in}\ (0,\,a) \times \Omega, \\[0.5em]
		\cB(\bar{u},\,\bar{p}) = 0
			\quad \quad \mbox{on}\ (0,\,a) \times \partial \Omega, \\[0.5em]
		\bar{u}(0) = 0
			\quad \quad \mbox{in}\ \Omega
	\end{array}
\end{equation*}
for all $a \in (0,\,1]$, where the nonlinearity on the right-hand side is given as
\begin{equation*}
	N^\ast(\bar{u}) = - ((\bar{u} + u^\ast) \cdot \nabla) (\bar{u} + u^\ast).
\end{equation*}
This is the well-known nonlinear perturbation, which always occurs, if the Navier-Stokes equations are reduced to the Stokes equations.
Since
\begin{equation*}
	\bar{u} \in {}_0 H^1_p((0,\,a),\,L_p(\Omega,\,\bR^n)) \cap L_p((0,\,a),\,H^2_p(\Omega,\,\bR^n))
\end{equation*}
and $p > n + 2$, we may employ the standard estimates of the nonlinearity $N^\ast$ to choose $a \in (0,\,1]$ sufficiently small,
such that existence and uniqueness of a local-in-time solution follows by a contraction mapping argument.
We do not want to repeat these well-known arguments here and close the proof of \Thmref{n}.

\appendix
\section*{Appendix A \\ Parabolic Systems with Prescribed Divergence: $L_p$-Maximal Regularity}\label{sec:parabolic-div-maxreg}\renewcommand{\thesection}{A}
In this appendix, we establish the maximal regularity property of the parabolic problem
\begin{equation*}
	\label{eqn:ins}\tag*{${(\mbox{P})}^{a, \alpha}_{f, h, u_0}$}
	\begin{array}{c}
		\rho \partial_t u - \mu \Delta u = \rho f,
			\quad \quad \mbox{in}\ (0,\,a) \times \Omega, \\[0.5em]
		\cB^\alpha_{\textrm{\tiny div}}(u) = h
			\quad \quad \mbox{on}\ (0,\,a) \times \partial \Omega, \\[0.5em]
		u(0) = u_0
			\quad \quad \mbox{in}\ \Omega
	\end{array}
\end{equation*}
with $\alpha \in \{\,-1,\,0,\,1\,\}$, where the boundary condition is given by the linear operator
\begin{equation*}
	\cB^0_{\textrm{\tiny div}}(u) := P_\Gamma [u] + [\mbox{div}\,u] \nu
\end{equation*}
resp.
\begin{equation*}
	\cB^{\pm 1}_{\textrm{\tiny div}}(u) := \mu P_\Gamma (\nabla u \pm \nabla u^{\sf{T}}) \nu + [\mbox{div}\,u] \nu
\end{equation*}
for $\alpha = \pm 1$.
We assume $a > 0$ and $\Omega \subseteq \bR^n$ to be a halfspace, a bent halfspace or a bounded domain with boundary $\Gamma := \partial \Omega$ of class $C^{3-}$ and $1 < p < \infty$ with $p \neq \frac{3}{2},\,3$.
The claimed maximal regularity property of the system ${(\mbox{P})}^{a, \alpha}_{f, h, u_0}$ is therefore equivalent to the existence of a unique maximal regular solution
\begin{equation*}
	u \in H^1_p((0,\,a),\,L_p(\Omega,\,\bR^n)) \cap L_p((0,\,a),\,H^2_p(\Omega,\,\bR^n)),
\end{equation*}
whenever
\begin{equation*}
	f \in L_p((0,\,a),\,L_p(\Omega,\,\bR^n)), \ h \in L_p((0,\,a),\,L_p(\Gamma,\,\bR^n)) \ \mbox{and} \ u_0 \in W^{2 - 2/p}_p(\Omega,\,\bR^n)
\end{equation*}
with
\begin{equation*}
	\begin{array}{rcll}
		P_\Gamma h & \in & W^{1 - 1/2p}_p((0,\,a),\,L_p(\Gamma,\,T\Gamma)) \cap L_p((0,\,a),\,W^{2 - 1/p}_p(\Gamma,\,T\Gamma)),   & \ \mbox{if} \ \alpha = 0,    \\[0.5em]
		P_\Gamma h & \in & W^{1/2 - 1/2p}_p((0,\,a),\,L_p(\Gamma,\,T\Gamma)) \cap L_p((0,\,a),\,W^{1 - 1/p}_p(\Gamma,\,T\Gamma)), & \ \mbox{if} \ \alpha = \pm 1
	\end{array}
\end{equation*}
and
\begin{equation*}
	Q_\Gamma h \in W^{1/2 - 1/2p}_p((0,\,a),\,L_p(\Gamma,\,N\Gamma)) \cap L_p((0,\,a),\,W^{1 - 1/p}_p(\Gamma,\,N\Gamma))
\end{equation*}
satisfy the compatibility conditions
\begin{equation*}
	\begin{array}{rclll}
		P_\Gamma [u_0]                                          & = & P_\Gamma h(0), & \mbox{if}\ \alpha = 0     & \mbox{and}\ p > \frac{3}{2}, \\[0.5em]
		2 \mu P_\Gamma (\nabla u_0 \pm \nabla u^{\sf{T}}_0) \nu & = & P_\Gamma h(0), & \mbox{if}\ \alpha = \pm 1 & \mbox{and}\ p > 3
	\end{array}
\end{equation*}
and
\begin{equation*}
	[\mbox{div}\,u_0] = h(0) \cdot \nu, \ \mbox{if}\ p > 3.
\end{equation*}
We will prove the claimed maximal regularity property of the system ${(\mbox{P})}^a_{f, h, u_0}$ in the following subsections for the different cases $\alpha = 0$ and $\alpha = \pm 1$.

\subsection{The Case $\boldsymbol{\alpha = 0}$}
We treat the system like the Stokes equations in the previous subsections by a localization procedure and start with the halfspace case $\Omega = \bR^n_+$.
We decompose the desired solution as $u = (v,\,w)$ into a tangential part $v: (0,\,a) \times \bR^n_+ \longrightarrow \bR^{n - 1}$
and a normal part $w: (0,\,a) \times \bR^n_+ \longrightarrow \bR$.
The system ${(\mbox{P})}^{a, 0}_{f, h, u_0}$ then reads
\begin{equation*}
	\begin{array}{c}
		\rho \partial_t v - \mu \Delta v = \rho f_v, \quad
		\rho \partial_t w - \mu \Delta w = \rho f_w,
			\quad \quad \mbox{in}\ (0,\,a) \times \bR^n_+, \\[0.5em]
		[v] = h_v, \quad
		\mbox{div}_x [v] + [\partial_y w] = h_w,
			\quad \quad \mbox{on}\ (0,\,a) \times \partial \bR^n_+, \\[0.5em]
		v(0) = v_0, \quad
		w(0) = w_0
			\quad \quad \mbox{in}\ \bR^n_+,
	\end{array}
\end{equation*}
where we decomposed $f = (f_v,\,f_w)$, $h = (h_v,\,h_w)$ and $u_0 = (v_0,\,w_0)$ into a tangential part and a normal part analogously to the solution $u$.
Moreover, we decomposed the spatial variable into a tangential part $x \in \bR^{n - 1}$ and a normal part $y > 0$.
Of course, $\mbox{div}_x$ denotes the divergence w.\,r.\,t. $x$.
Obviously, this system may first be solved for a unique maximal regular solution $v$, which may then be used as part of the data of the parabolic boundary value problem for $w$.
Hence, $w$ may in a second step be obtained as the unique maximal regular solution to this remaining problem.

Now, we assume $\Omega = \bR^n_\omega$ to be a bent halfspace with a sufficiently flat function $\omega \in BUC^{3-}(\bR^{n - 1})$.
We may first solve the plain Dirichlet problem
\begin{equation*}
	\begin{array}{c}
		\rho \partial_t u - \mu \Delta u = \rho f,
			\quad \quad \mbox{in}\ (0,\,a) \times \Omega, \\[0.5em]
		[u] = P_\Gamma h + (e^{t \Delta_\Gamma}[u_0] \cdot \nu) \nu
			\quad \quad \mbox{on}\ (0,\,a) \times \partial\Omega, \\[0.5em]
		u(0) = u_0
			\quad \quad \mbox{in}\ \Omega,
	\end{array}
\end{equation*}
which forms a system of $n$ decoupled parabolic initial boundary value problems with Dirichlet condition.
Here, $\Delta_\Gamma$ denotes the Laplace-Beltrami operator on $\Gamma$.
Hence, we may in the sequel assume $f = 0$, $P_\Gamma h = 0$ and $u_0 = 0$.
Therefore, we may repeat exactly the same arguments as in \Secref{bent-halfspace} to reduce the problem to the halfspace case.

Finally, we assume $\Omega \subseteq \bR^n$ to be a bounded domain with compact boundary $\Gamma = \partial \Omega$ of class $C^{3-}$.
Since we may solve the above plain Dirichlet problem,
we may again assume $f = 0$, $P_\Gamma h = 0$ and $u_0 = 0$.
Therefore, we may repeat exactly the same arguments as in \Secref{domain} to reduce the problem to the whole space case resp.~the bent halfspace case.
The treatment of the case $\alpha = 0$ is therefore complete.

\subsection{The Case $\boldsymbol{\alpha = \pm 1}$}
Here all boundary conditions are of order one.
Therefore, the system fits into the context of the abstract parabolic problems, which are completely treated in \cite{Denk-Hieber-Pruess:Maximal-Regularity, Denk-Hieber-Pruess:Maximal-Regularity-Inhomogeneous}.
Indeed, the system satisfies all necessary and sufficient conditions, which reveal $L_p$-maximal regularity --
especially the {\itshape Lopatinskii-Shapiro condition}.
For more details we refer to \cite{Denk-Hieber-Pruess:Maximal-Regularity, Denk-Hieber-Pruess:Maximal-Regularity-Inhomogeneous}.

\section*{Appendix B \\ Parabolic Systems with Prescribed Divergence: Special Solutions}\label{sec:parabolic-div-special}
In this appendix, we study the parabolic system
\begin{equation*}
	\tag*{${(\mbox{P})}^{\infty, \bR^n_+, \alpha}_{- \nabla p, 0, 0}$}
	\begin{array}{c}
		\rho \epsilon u + \rho \partial_t u - \mu \Delta u = - \nabla p
			\quad \quad \mbox{in}\ (0,\,\infty) \times \bR^n_+, \\[0.5em]
		\cB^\alpha_{\textrm{\tiny div}}(u) = 0
			\quad \quad \mbox{on}\ (0,\,\infty) \times \partial \bR^n_+, \\[0.5em]
		u(0) = 0
			\quad \quad \mbox{in}\ \bR^n_+
	\end{array}
\end{equation*}
with $\alpha \in \{\,-1,\,0,\,1\,\}$, where the boundary condition is defined as in the previous appendix,
i.\,e. it is given by the linear operator
\begin{equation*}
	\cB^0_{\textrm{\tiny div}}(u) := P_\Gamma [u] + [\mbox{div}\,u] \nu
\end{equation*}
resp.
\begin{equation*}
	\cB^{\pm 1}_{\textrm{\tiny div}}(u) := \mu P_\Gamma (\nabla u \pm \nabla u^{\sf{T}}) \nu + [\mbox{div}\,u] \nu
\end{equation*}
for $\alpha = \pm 1$.
Moreover, the parameter $\epsilon > 0$ denotes an arbitrary shift.
We will limit the following considerations to the special case,
where the right hand side $- \nabla p \in L_p((0,\,\infty),\,L_p(\bR^n_+,\,\bR^n))$ is obtained by either solving the Dirichlet problem
\begin{equation*}
	\tag*{${(\mbox{E}_D)}^{\infty, \bR^n_+}_{h}$}
		- \Delta p = 0 \quad \quad \mbox{in} \ (0,\,\infty) \times \bR^n_+, \qquad 
		[p] = h \quad \quad \mbox{on} \ (0,\,\infty) \times \partial \bR^n_+
\end{equation*}
for a given function $h \in L_p((0,\,\infty),\,\dot{W}^{1 - 1/p}_p(\partial \bR^n_+))$, or the Neumann problem
\begin{equation*}
	\tag*{${(\mbox{E}_N)}^{\infty, \bR^n_+}_{h}$}
		- \Delta p = 0 \quad \quad \mbox{in} \ (0,\,\infty) \times \bR^n_+, \qquad 
		\partial_\nu p = h \quad \quad \mbox{on} \ (0,\,\infty) \times \partial \bR^n_+
\end{equation*}
for a given function $h \in L_p((0,\,\infty),\,\dot{W}^{-1/p}_p(\partial \bR^n_+))$.
Now, Appendix~A ensures the existence of a unique maximal regular solution
\begin{equation*}
	u \in {}_0 H^1_p((0,\,\infty),\,L_p(\bR^n_+,\,\bR^n)) \cap L_p((0,\,\infty),\,H^2_p(\bR^n_+,\,\bR^n)),
\end{equation*}
which satisfies $\mbox{div}\,u = 0$, thanks to the boundary condition.
Splitting the spatial variable into a tangential part $x \in \bR^{n - 1}$ and a normal part $y > 0$ as well as
the solution as $u = (v,\,w)$ into a tangential part $v: [0,\,\infty) \times \bR^n_+ \longrightarrow \bR^{n - 1}$
and a normal part $w: [0,\,\infty) \times \bR^n_+ \longrightarrow \bR$, we obtain by a Laplace transformation in time and a Fourier transformation in the tangential variable
\begin{equation*}
	\begin{array}{rcll}
		\omega^2 \hat{v}(\lambda,\,\xi,\,y) - \mu \partial^2_y \hat{v}(\lambda,\,\xi,\,y) & = & - i \xi \hat{p}(\lambda,\,\xi,\,y),      & \quad \mbox{Re}\,\lambda \geq 0,\,\xi \in \bR^{n - 1},\,y > 0, \\[0.5em]
		\omega^2 \hat{w}(\lambda,\,\xi,\,y) - \mu \partial^2_y \hat{w}(\lambda,\,\xi,\,y) & = & - \partial_y \hat{p}(\lambda,\,\xi,\,y), & \quad \mbox{Re}\,\lambda \geq 0,\,\xi \in \bR^{n - 1},\,y > 0,
	\end{array}
\end{equation*}
where we abbreviated as usual
\begin{equation*}
\lambda_\epsilon := \epsilon + \lambda, \qquad \omega := \sqrt{\rho \lambda_\epsilon + \mu |\xi|^2}.
\end{equation*}
If $\alpha = 0$, then the boundary conditions read
\begin{equation*}
	\begin{array}{rcll}
		                                                     [\hat{v}](\lambda,\,\xi) & = & 0, & \quad \mbox{Re}\,\lambda \geq 0,\,\xi \in \bR^{n - 1}, \\[0.5em]
		i \xi^{\sf{T}} [\hat{v}](\lambda,\,\xi) + [\partial_y \hat{w}](\lambda,\,\xi) & = & 0, & \quad \mbox{Re}\,\lambda \geq 0,\,\xi \in \bR^{n - 1}
	\end{array}
\end{equation*}
and we obtain
\begin{equation*}
	\begin{array}{rcll}
		\hat{v}(\lambda,\,\xi,\,y) = - {\displaystyle{\int^\infty_0}} G_-(\lambda,\,\xi,\,y,\,\eta)\,i \xi \hat{p}(\lambda,\,\xi,\,\eta)\,\mbox{d}\eta,      & \quad \mbox{Re}\,\lambda \geq 0,\,\xi \in \bR^{n - 1},\,y > 0, \\[1.5em]
		\hat{w}(\lambda,\,\xi,\,y) = - {\displaystyle{\int^\infty_0}} G_+(\lambda,\,\xi,\,y,\,\eta)\,\partial_y \hat{p}(\lambda,\,\xi,\,\eta)\,\mbox{d}\eta, & \quad \mbox{Re}\,\lambda \geq 0,\,\xi \in \bR^{n - 1},\,y > 0
	\end{array}
\end{equation*}
where
\begin{equation*}
	G(\lambda,\,\xi,\,y,\,\eta) = \frac{1}{2 \sqrt{\mu} \omega} e^{- \frac{\omega}{\sqrt{\mu}} |y - \eta|}, \quad \mbox{Re}\,\lambda \geq 0,\,\xi \in \bR^{n - 1},\,y \in \bR,\,\eta \in \bR
\end{equation*}
denotes the fundamental solution to the ordinary differential equation
\begin{equation*}
	\omega^2 \hat{\phi}(\lambda,\,\xi,\,y) - \mu \partial^2_y \hat{\phi}(\lambda,\,\xi,\,y) = \hat{f}(\lambda,\,\xi,\,y), \quad \mbox{Re}\,\lambda \geq 0,\,\xi \in \bR^{n - 1},\,y \in \bR
\end{equation*}
and
\begin{equation*}
	\begin{array}{l}
		G_{\pm}(\lambda,\,\xi,\,y,\,\eta) = G(\lambda,\,\xi,\,y,\,\eta) \pm G(\lambda,\,\xi,\,y,\,-\eta), \\[0.5em]
		\qquad \qquad \qquad \qquad \qquad \qquad \mbox{Re}\,\lambda \geq 0,\,\xi \in \bR^{n - 1},\,y > 0,\,\eta > 0.
	\end{array}
\end{equation*}
Hence, if $p$ is obtained as a solution to the Neumann problem ${(\mbox{E}_N)}^{\infty, \bR^n_+}_{h}$, we have
\begin{equation*}
	- \partial_y \hat{p}(\lambda,\,\xi,\,y) = |\xi| e^{- |\xi| y}\,\widehat{{(- \Delta_\Gamma)}^{-1/2} h}(\lambda,\,\xi), \quad \mbox{Re}\,\lambda \geq 0,\,\xi \in \bR^{n - 1},\,y > 0,
\end{equation*}
which implies
\begin{equation*}
	\label{eqn:trace:0-0}\tag{T${}_{0, 0}$}
	\begin{array}{rcl}
		[\hat{w}](\lambda,\,\xi) & = & {\displaystyle{\int^\infty_0}} G_+(\lambda,\,\xi,\,0,\,\eta)\,|\xi| e^{- |\xi| \eta}\,\widehat{{(- \Delta_\Gamma)}^{-1/2} h}(\lambda,\,\xi)\,\mbox{d}\eta \\[1.5em]
                             & = & {\displaystyle{\frac{1}{\omega(\omega + |\zeta|)}}} \hat{h} (\lambda,\,\xi), \quad \mbox{Re}\,\lambda \geq 0,\,\xi \in \bR^{n - 1},
	\end{array}
\end{equation*}
where $\zeta := \sqrt{\mu}\,\xi$, i.\,e. $T^0 [w] = h$ with $T^0$ as defined in \Secref{halfspace}.
If $\alpha = \pm 1$, then the boundary conditions read
\begin{equation*}
	\begin{array}{rcll}
		\mp \mu [\partial_y \hat{v}](\lambda,\,\xi) - \mu \nabla _x [\hat{w}](\lambda,\,\xi) & = & 0, & \quad \mbox{Re}\,\lambda \geq 0,\,\xi \in \bR^{n - 1}, \\[0.5em]
		       i \xi^{\sf{T}} [\hat{v}](\lambda,\,\xi) + [\partial_y \hat{w}](\lambda,\,\xi) & = & 0, & \quad \mbox{Re}\,\lambda \geq 0,\,\xi \in \bR^{n - 1}
	\end{array}
\end{equation*}
and we obtain
\begin{equation*}
	\begin{array}{rcll}
		\hat{v}(\lambda,\,\xi,\,y) = - {\displaystyle{\int^\infty_0}} K^\pm_v(\lambda,\,\xi,\,y,\,\eta)\,i \xi \hat{p}(\lambda,\,\xi,\,\eta)\,\mbox{d}\eta,        & \quad \mbox{Re}\,\lambda \geq 0,\,\xi \in \bR^{n - 1},\,y > 0, \\[1.5em]
		\hat{w}(\lambda,\,\xi,\,y) = - {\displaystyle{\int^\infty_0}} K^\pm_w(\lambda,\,\xi,\,y,\,\eta)\,\partial_y \hat{p}(\lambda,\,\xi,\,\eta)\,\mbox{d}\eta, & \quad \mbox{Re}\,\lambda \geq 0,\,\xi \in \bR^{n - 1},\,y > 0,
	\end{array}
\end{equation*}
where
\begin{equation*}
	\begin{array}{rcl}
		K^\pm_v(\lambda,\,\xi,\,y,\,\eta)
			& := & \left( 1 \pm {\displaystyle{\frac{(\omega + |\zeta|) |\zeta|}{{(\omega + 2 |\zeta|)}^2 \pm |\zeta|^2}}} \right) G_+(\lambda,\,\xi,\,y,\,\eta) \\[1.5em]
			&    & \quad \mp \ {\displaystyle{\frac{(\omega + |\zeta|) |\zeta|}{{(\omega + 2 |\zeta|)}^2 \pm |\zeta|^2}}} G_-(\lambda,\,\xi,\,y,\,\eta)
	\end{array}
\end{equation*}
and
\begin{equation*}
	\begin{array}{rcl}
		K^\pm_w(\lambda,\,\xi,\,y,\,\eta)
			& := & \left( 1 - {\displaystyle{\frac{(\omega + 2 |\zeta|) |\zeta| \mp |\zeta|^2}{{(\omega + 2 |\zeta|)}^2 \pm |\zeta|^2}}} \right) G_+(\lambda,\,\xi,\,y,\,\eta) \\[1.5em]
			&    & \quad + \ {\displaystyle{\frac{(\omega + 2 |\zeta|) |\zeta| \mp |\zeta|^2}{{(\omega + 2 |\zeta|)}^2 \pm |\zeta|^2}}} G_-(\lambda,\,\xi,\,y,\,\eta)
	\end{array}
\end{equation*}
for $\mbox{Re}\,\lambda \geq 0,\,\xi \in \bR^{n - 1},\,y > 0,\,\eta > 0$.
Hence, if $p$ is obtained as a solution to the Neumann problem ${(\mbox{E}_N)}^{\infty, \bR^n_+}_{h}$, we have
\begin{equation*}
	- \partial_y \hat{p}(\lambda,\,\xi,\,y) = |\xi| e^{- |\xi| y}\,\widehat{{(- \Delta_\Gamma)}^{-1/2} h}(\lambda,\,\xi), \quad \mbox{Re}\,\lambda \geq 0,\,\xi \in \bR^{n - 1},\,y > 0,
\end{equation*}
which implies
\begin{equation*}
	\label{eqn:trace:1-0}\tag{T${}_{1, 0}$}
	\begin{array}{rcl}
		[\hat{w}](\lambda,\,\xi) & = & {\displaystyle{\int^\infty_0}} K^{\pm}_w(\lambda,\,\xi,\,0,\,\eta)\,|\xi| e^{- |\xi| \eta}\,\widehat{{(- \Delta_\Gamma)}^{-1/2} h}(\lambda,\,\xi)\,\mbox{d}\eta \\[1.5em]
                             & = & {\displaystyle{\frac{1}{\omega^2 \pm |\zeta|^2}}} \hat{h} (\lambda,\,\xi), \quad \mbox{Re}\,\lambda \geq 0,\,\xi \in \bR^{n - 1},
	\end{array}
\end{equation*}
i.\,e. $T^{\pm 1}[w] = h$ with $T^{\pm 1}$ as defined in \Secref{halfspace}.

On the other hand, if $p$ is obtained as a solution to the Dirichlet problem ${(\mbox{E}_D)}^{\infty, \bR^n_+}_{h}$, we have
\begin{equation*}
	- \partial_y \hat{p}(\lambda,\,\xi,\,y) = |\xi| e^{- |\xi| y}\,\hat{h}(\lambda,\,\xi), \quad \mbox{Re}\,\lambda \geq 0,\,\xi \in \bR^{n - 1},\,y > 0,
\end{equation*}
which implies
\begin{equation*}
	\label{eqn:trace:1-1}\tag{T${}_{1, 1}$}
	\begin{array}{l}
		- 2 \mu [\partial_y w] + [p] \\[0.5em]
		\qquad = - \ 2 \mu {\displaystyle{\int^\infty_0}} (\partial_y K^{\pm}_w)(\lambda,\,\xi,\,0,\,\eta)\,|\xi| e^{- |\xi| \eta}\,\hat{h}(\lambda,\,\xi)\,\mbox{d}\eta + \hat{h}(\lambda,\,\xi) \\[1.5em]
    \qquad = {\displaystyle{\frac{(\omega^2 \mp |\zeta|^2) + 2 \frac{\omega}{\omega + |\zeta|} (|\zeta|^2 \pm |\zeta|^2)}{\omega^2 \pm |\zeta|^2}}} \hat{h} (\lambda,\,\xi), \quad \mbox{Re}\,\lambda \geq 0,\,\xi \in \bR^{n - 1},
	\end{array}
\end{equation*}
i.\,e. $S^{\pm 1} (- 2 \mu [\partial_y w] + [p]) = h$ with $S^{\pm 1}$ as defined in \Secref{halfspace}.

\section*{Acknowledgements}
M.\,K.~gratefully acknowledges financial support by the Deutsche For\-schungs\-gemeinschaft within the International Research Training Group ``Mathematical Fluid Dynamics'' (IRTG 1529).

\bibliographystyle{plain}
\bibliography{references}
\end{document}